     \def\section{\@startsection{section}{1}%
     \z@{.7\linespacing\@plus\linespacing}{.5\linespacing}%
     {\bfseries%\normalfont\scshape
     \centering
     }}
     \def\@secnumfont{\bfseries}
\newtheorem{theorem}{Theorem}[section]
\newtheorem{lemma}[theorem]{Lemma}
\newtheorem{proposition}[theorem]{Proposition}
\newtheorem{corollary}[theorem]{Corollary}
\theoremstyle{definition}
\newtheorem{definition}[theorem]{Definition}
\newtheorem{example}[theorem]{Example}
\theoremstyle{remark}
\newtheorem{remark}[theorem]{Remark}
\numberwithin{equation}{section} \setcounter{page}{1}
\def\bR{\mathbb{R}}
\def\bZ{\mathbb{Z}}
\def\bE{\mathbb{E}}
\def\bN{\mathbb{N}}
\def\bD{\mathbb{D}}
\def\cB{\mathcal{B}}
\def\cD{\mathcal{D}}
\def\cE{\mathcal{E}}
\def\cF{\mathcal{F}}
\def\cG{\mathcal{G}}
\def\cH{\mathcal{H}}
\def\cK{\mathcal{K}}
\def\cP{\mathcal{P}}
\def\cO{\mathcal{O}}
\def\cS{\mathcal{S}}
\begin{document}

\title[Stochastic Heat Equation: germ Markov Property]
{The Stochastic Heat Equation Driven by a Gaussian Noise: germ
Markov Property}

\author[Raluca Balan]{Raluca Balan*}
\thanks{The first author was supported by a grant from the Natural Sciences and
Engineering Research Council of Canada. The second author was
supported by a post-doctoral fellowship at University of Ottawa.}
\address{Corresponding author. Department of Mathematics and Statistics,
University of Ottawa, 585, King Edward Avenue, Ottawa, Ontario K1N
6N5 Canada} \email{rbalan@uottawa.ca}
\urladdr{http://aix1.uottawa.ca/$\sim$rbalan}

\author{Doyoon Kim}
\address{Department of Mathematics,
University of Southern California, Los Angeles, 3620 Vermont Avenue,
KAP108, Los Angeles, CA 90089, USA} \email{doyoonki@usc.edu}
\urladdr{http://college.usc.edu/faculty/faculty1017406.html}

\subjclass[2000] {Primary 60H15; Secondary 60G60}
%6oH15=spde; 60G60=random fields

\keywords{Gaussian noise; stochastic integral; stochastic heat
equation; Reproducing Kernel Hilbert Space; germ Markov property}

\begin{abstract}
\noindent Let $u=\{u(t,x);t \in [0,T], x \in {\mathbb{R}}^{d}\}$ be
the process solution of the stochastic heat equation $u_{t}=\Delta
u+ \dot F, u(0,\cdot)=0$ driven by a Gaussian noise $\dot F$, which
is white in time and has spatial covariance induced by the kernel
$f$. In this paper we prove that the process $u$ is locally germ
Markov, if $f$ is the Bessel kernel of order $\alpha=2k,k \in
\bN_{+}$, or $f$ is the Riesz kernel of order $\alpha=4k,k \in
\bN_{+}$.
\end{abstract}

%\date{June 5, 2008}

\maketitle

\section{Introduction}

This article is based on the theory of stochastic partial
differential equations (s.p.d.e.'s) initiated by John Walsh in 1986 \cite{walsh86}.
This theory relies on the construction of a stochastic integral with
respect to the so-called ``worthy martingale measures'' and focuses
mainly on equations driven by a space-time white noise. 
%In some
%cases, if the spatial dimension $d$ is greater than $1$, such an
%equation does not have a process solution, although it may possess a
%solution in the space of random (Schwartz) distributions.
%The seminal work \cite{walsh86} has now become the standard reference in
%this area.

Recently, there has been a considerable amount of interest in
s.p.d.e.'s driven by a noise term which is white in time, but is
``colored'' in space, in the sense that it has a spatial covariance
structure induced by a kernel $f$, which is the Fourier transform of
a tempered distribution $\mu$. This line of research was initiated
in \cite{mueller97}, \cite{dalang-frangos98} for the stochastic wave
equation in the case $d=2$, and then generalized in \cite{dalang99},
\cite{dalang-mueller03} to a larger class of s.p.d.e.'s in arbitrary
spatial dimensions. The new theory requires an extension of Walsh's
stochastic integral, to include the case when the integrand is a
(Schwartz) distribution; this type of extension is needed for
instance for the stochastic wave equation with $d \geq 3$. Under a
certain integrability condition imposed on the measure $\mu$, one
can prove that any second-order s.p.d.e. with constant coefficients
has a process solution, i.e. a solution which can be identified with
a multiparameter stochastic process $u$. As far as we know, the
literature to date does not contain any study of the germ Markov
property of this process solution. Such a study is of great
importance because it allows us to conclude that the behavior of the
process in any time-space region $A$ is independent of its behavior
outside the region, given the values of the process in a thin area
around the boundary $\partial A$ of the region.

There is an immense amount of literature dedicated to the study of
different types of Markov properties for multiparameter processes
(see %the Introduction of 
\cite{balan-ivanoff02} and the references
therein). In particular, the germ Markov property of multiparameter
Gaussian processes is an area which received a lot of attention in
the 1970's, which allows for the use of various tools imported from
Hilbert space analysis, like the method of the Reproducing Kernel
Hilbert Space (RKHS). An excellent all-time reference is the
monograph \cite{rozanov82}; earlier references include
\cite{pitt71}, \cite{kunsch79}, \cite{nualart-sanz79}.

In the 1990's, the problem of the germ Markov property for
multi-dimensional Gaussian processes appears again in the
literature, this time for processes which arise as solutions of
s.p.d.e.'s driven by a white noise: the case of elliptic equations
was thoroughly treated in \cite{donatimartin-nualart94}, while the
systematic study of this problem for the quasi-linear parabolic
equations (in the case $d=1$) is found in \cite{nualart-pardoux94}.
The case of hyperbolic equations turns out to be the most difficult
one: the only reference here seems to be \cite{dalang-hou97}, in
which the authors investigate very carefully the structure of the
germ $\sigma$-fields induced by the process solution of the
stochastic wave equation (in the case $d=2$) driven by a L\'{e}vy
noise without Gaussian component. (This type of noise induces a very
particular form for the process solution, which is used in a
fundamental way for deriving its germ Markov property.) Finally, the
more recent work \cite{pitt-robeva03} contains a detailed analysis
of the relationship between the germ and sharp $\sigma$-fields
(which are used for defining the sharp and germ Markov property,
respectively), in the case of the process solution associated to the
Bessel equation driven by a white noise (in the case $d=2$).

The goal of the present paper is to investigate the structure of the
RKHS associated to the process solution $u$ of the stochastic heat
equation driven by a spatial covariance kernel $f$, and to identify
some examples of functions $f$ for which the process $u$ possesses
the germ Markov property. As far as we know, this is the first
attempt to tackle a problem of this type in the literature. Our two
main results (Theorem \ref{main-theorem-Bessel} and Theorem
\ref{main-theorem-Riesz}) state that the process solution $u$ has
the germ Markov property, if the function $f$ is either the Bessel
kernel of order $\alpha=2k, k \in {\bf N}$ or the Riesz kernel of
order $\alpha=4k, k \in {\bf N}$. In order to prove these results,
we use some results from the $L_{p}$-theory of parabolic equations
with mixed norms, due to \cite{krylov99}, \cite{krylov01}. The germ
Markov property of the process solution $u$ in the case of the
Bessel or Riesz kernels with $\alpha>0$ arbitrary (or in the case of
other kernel functions) remains an open problem.

%To illustrate the main ideas in a simplified manner and to avoid
%many technical details (which have to do in part with the study of
%some abstract, but purely deterministic, p.d.e.'s), we chose a
%rather simple equation, whose differential operator has a smooth
%fundamental solution: the stochastic heat equation. We leave the
%case of a more complex s.p.d.e. for future work.

The paper is organized as follows. In Section 2, we introduce the
framework for the study of s.p.d.e.'s, including the construction of
the extended stochastic integral due to Dalang (as in
\cite{dalang99}). In Section 3, we include all the ingredients which
are necessary to formulate the result about the existence of the
process solution $u$, and we examine the structure of the Gaussian
space and the RKHS associated to this process. In Section 4, we give
the definition of the germ Markov property and we prove our two main
results, by applying a fundamental result of \cite{kunsch79}. The
two appendices contain some technical proofs.

\section{The Framework}
\subsection{Basic Notation}

We denote by $\cD(U)$ the space of all infinitely differentiable
functions on $\mathbb{R}^{n}$ whose support is compact and contained
in the open set $U$, and by $\cD'(U)$ the space of {\em
distributions}, i.e. continuous linear functionals on $\cD(U)$. We
denote by $\cS(\mathbb{R}^{n})$ the Schwartz space of all rapidly
decreasing functions on $\mathbb{R}^{n}$, and by
$\cS'(\mathbb{R}^{n})$ the space of {\em tempered distributions},
i.e. continuous linear functionals on $\cS(\mathbb{R}^{n})$. The
space $\cS'(\mathbb{R}^{n})$ can be viewed as a subspace of
$\cD'(\mathbb{R}^{n})$. An important subspace of
$\cS'(\mathbb{R}^{n})$ is the space $\cO_{C}'(\mathbb{R}^{n})$ of
all distributions with {\em rapid decrease} (see Chapter VII,
section 5, \cite{schwartz66}).

%The space ${\cal D}(R^{n})$ is dense in $L^{p}(R^{n})$ for any $1
%\leq p<\infty$.
%We denote by $\delta$ the Dirac function at $0$.
For an arbitrary function $\phi$ on $\mathbb{R}^{n}$, the
translation by $x$ is denoted with $\phi_{x}$, i.e.
$\phi_{x}(y)=\phi(x+y)$, and the reflection by $0$ is denoted with
$\tilde \phi$, i.e. $\tilde \phi(x)=\phi(-x)$. These notions have
obvious extensions to distributions. The Fourier transform of $\phi
\in L_{1}(\mathbb{R}^{n})$ is defined by $\cF\phi (\xi)=(2
\pi)^{-n/2}\int_{\mathbb{R}^{n}} \exp(- i \xi \cdot x) \phi(x)dx$.
The map $\cF:\cS(\mathbb{R}^{n}) \rightarrow \cS(\mathbb{R}^{n})$
%is an isomorphism which extends uniquely to a unitary isomorphism of
can be extended to $L_{2}(\mathbb{R}^{n})$ and
$\cS'(\mathbb{R}^{n})$. For every subset $U \subset \bR^n$, we
denote
$$ \langle \varphi,\psi \rangle_{L_2(U)} = \int_{U} \varphi(x)
\overline{\psi(x)} dx, $$ whenever the integral is defined.

\subsection{The Gaussian Noise}

\noindent Let $T>0$ and $f$ be a locally integrable function on
$\mathbb{R}^{d}$. As in \cite{dalang99}, for every $\varphi, \psi
\in \cD((0,T) \times \mathbb{R}^{d})$ we define
\begin{equation}
\label{def-inner-product} J_{f}(\varphi, \psi)=\langle \varphi,\psi
\rangle_{0}=\int_{0}^{T} \int_{\bR^{d}} \int_{\bR^{d}}
\varphi(t,x)f(x-y) \psi(t,y)dy dx dt.
\end{equation}
%=\int_{R_{+}} \int_{R^{d}}(\varphi(t)* \widetilde{\psi(t)}) (y) f(y)dy dt

 The Gaussian
 noise mentioned in the Introduction will be a zero-mean Gaussian
 process with covariance $J_{f}$. The existence of this process is based on the
following Bochner-Schwartz type result (see Theorem 2, p. 157,
\cite{gelfand-vilenkin64}):

\begin{lemma}
\label{3-equivalences} The bi-functional $J_{f}$ is
nonnegative-definite if and only if there exists a tempered measure
$\mu$ on $\mathbb{R}^{d}$ such that
$$f(x)=\int_{\mathbb{R}^{d}}e^{-i \xi \cdot x}\mu(d \xi), \ \ \ \forall
x \in R^{d}.$$
\end{lemma}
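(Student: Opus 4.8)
The plan is to establish this as a standard Bochner–Schwartz argument, specialized to the translation-invariant bilinear form $J_f$ built from the kernel $f$. The key observation is that $J_f$ factors through the time and space variables: since $\varphi,\psi \in \cD((0,T)\times\bR^d)$, the time integration $\int_0^T \cdot \, dt$ is an inner product in $L_2((0,T))$, which is automatically nonnegative, so the whole question reduces to the spatial part. Concretely, writing $\varphi_t(x)=\varphi(t,x)$ and $\psi_t(x)=\psi(t,x)$, we have
\begin{equation*}
J_f(\varphi,\psi) = \int_0^T \left( \int_{\bR^d}\int_{\bR^d} \varphi_t(x) f(x-y) \psi_t(y)\, dy\, dx \right) dt =: \int_0^T Q_f(\varphi_t,\psi_t)\, dt,
\end{equation*}
and $J_f$ is nonnegative-definite on $\cD((0,T)\times\bR^d)$ if and only if the spatial form $Q_f$ is nonnegative-definite on $\cD(\bR^d)$. (For the "only if" direction, given $\phi\in\cD(\bR^d)$ and $0<a<b<T$, plug in $\varphi(t,x)=g(t)\phi(x)$ with $g\in\cD((a,b))$, $g\ge 0$, $\int g^2>0$, to recover $Q_f(\phi,\phi)\ge 0$; the "if" direction is immediate from the displayed factorization.)

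Having reduced to $Q_f$, I would regard $f$ as a tempered distribution $T_f$ on $\bR^d$ (legitimate since $f$ is locally integrable and the measure $\mu$ we are after is tempered, which forces $f$ to have at most polynomial growth — more carefully, one first argues that nonnegative-definiteness of $Q_f$ forces $f$ to be a continuous function of positive type after mollification, hence tempered). The form $Q_f(\phi,\psi) = \langle T_f, \phi * \tilde{\bar\psi}\rangle$ is exactly the statement that $T_f$ is a distribution of positive type: $\langle T_f, \phi * \tilde{\bar\phi}\rangle \ge 0$ for all $\phi\in\cD(\bR^d)$. The Bochner–Schwartz theorem (Theorem 2, p.~157 of \cite{gelfand-vilenkin64}) asserts precisely that a distribution is of positive type if and only if it is the Fourier transform of a tempered measure $\mu$; unwinding the Fourier conventions used in the excerpt, this says $f(x)=\int_{\bR^d} e^{-i\xi\cdot x}\mu(d\xi)$. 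For the converse, if $f$ has this representation then
\begin{equation*}
Q_f(\phi,\phi) = \int_{\bR^d}\int_{\bR^d}\int_{\bR^d} \phi(x)\, e^{-i\xi\cdot(x-y)}\, \overline{\phi(y)}\, \mu(d\xi)\, dx\, dy = \int_{\bR^d} \big| \widehat{\phi}(\xi) \big|^2 \, \mu(d\xi) \ge 0,
\end{equation*}
where $\widehat{\phi}(\xi) = \int \phi(x) e^{-i\xi\cdot x}\, dx$ (up to the normalizing constant), using Fubini, justified by the temperedness of $\mu$ and the rapid decay of $\widehat\phi$.

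The main obstacle, and the only genuinely non-routine point, is the forward direction: deducing from the mere hypothesis that $Q_f$ is nonnegative-definite on test functions that $f$ can be identified with a tempered distribution of positive type, so that the Bochner–Schwartz machinery applies. This is where one must use that $f$ is locally integrable (so $T_f\in\cD'(\bR^d)$ makes sense), then mollify: for $\rho\in\cD(\bR^d)$ the regularization $f_\rho = f * \rho * \tilde{\bar\rho}$ is a continuous function of positive type in the classical (Bochner) sense, hence bounded by $f_\rho(0)$ and in particular of polynomial growth uniformly in the mollification parameter, which upgrades $T_f$ to a tempered distribution; the positive-type property passes to the limit. Everything after that is bookkeeping with Fourier transforms and Fubini. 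I would present the reduction to the spatial form, cite \cite{gelfand-vilenkin64} for the core equivalence, and relegate the mollification/temperedness verification to a remark or an appendix, since it is standard.
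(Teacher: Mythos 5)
Your proposal is correct and follows essentially the route the paper itself takes: the paper offers no written proof, simply quoting the Bochner--Schwartz theorem (Theorem 2, p.~157 of Gel'fand--Vilenkin), and your argument amounts to the routine reduction from $J_f$ to the spatial form $Q_f$ followed by that same citation, with the mollification/temperedness step being part of the standard proof of the cited theorem rather than something extra you need. The only point worth flagging is that for a non-finite tempered measure $\mu$ (e.g.\ the Riesz case) the displayed pointwise formula and your Fubini computation should be read distributionally, i.e.\ as $f=\cF\mu$ in $\cS'(\bR^d)$ paired against $\phi*\tilde{\phi}$, which is how the paper implicitly uses it.
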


In this case, for every $\varphi, \psi \in \cD((0,T) \times
\mathbb{R}^{d})$ $$ \langle \varphi,\psi \rangle_{0}=\int_0^T
\int_{\bR^d} \cF\varphi(t,\xi)
\overline{\cF\psi(t,\xi)}\mu(d\xi)dt,$$ where $\cF \varphi(t,\cdot)$
denotes the Fourier transform of $\varphi (t,\cdot)$.

The basic example of kernel functions is the white noise kernel:
$f(x)=\delta(x)$, $\mu(d\xi)=d\xi$.

More interesting examples of kernel functions $f$ arise when
$\mu(d\xi)=g(|\xi|^{2})d\xi$, for a certain function $g$ on
$(0,\infty)$. In this case, $f$ becomes the inverse Fourier
transform of the tempered distribution $\xi \mapsto g(|\xi|^{2})$
and we can define the operator $g(-\Delta):\cS(\mathbb{R}^{d})
\rightarrow \cS'(\mathbb{R}^{d})$ by
$\cF[g(-\Delta)\phi](\xi)=g(|\xi|^{2})\cF \phi(\xi)$, or
equivalently
%\begin{equation}
%\label{psi(-Delta)}
$$
g(-\Delta)\phi=\phi *f.
$$
%\end{equation}
(see p. 149-152, \cite{folland95}, or p. 117-132, \cite{stein70}).
Here are some typical examples:

\begin{example}
\label{riesz} {\rm If $\mu(d\xi)=|\xi|^{-\alpha}d\xi$, then $f$ is
the Riesz kernel of order $\alpha$:
$$f(x)=R_{\alpha}(x):=%\frac{\Gamma((d-\alpha)/2)}{\Gamma(\alpha/2)2^{ \alpha}\pi^{d/2}} \cdot
\gamma_{\alpha}|x|^{-d+\alpha}, \ \ \ 0<\alpha<d,$$
 where $\gamma_{\alpha}$ is an appropriate constant.
In this case, for every $\phi \in \cS(\mathbb{R}^{d})$ we have
$(-\Delta)^{-\alpha/2}\phi=\phi*R_{\alpha}$.}
\end{example}

\begin{example}
\label{bessel}{\rm If $\mu(d\xi)=(1+|\xi|^{2})^{-\alpha/2}d\xi$,
then $f$ is the Bessel kernel of order $\alpha$:
$$f(x)=B_{\alpha}(x):=%\frac{1}{\Gamma(\alpha/2)(4\pi)^{d/2}}
c_{\alpha}\int_{0}^{\infty}\tau^{(\alpha-d)/2-1}
e^{-\tau-|x|^{2}/(4\tau)} d\tau, \ \ \ \alpha>0,$$ where
$c_{\alpha}$ is an appropriate constant. In this case, for every
$\phi \in \cS(\mathbb{R}^{d})$ we have
$(1-\Delta)^{-\alpha/2}\phi=\phi*B_{\alpha}$.}
\end{example}

\begin{example} {\rm If $\mu(d\xi)=e^{-4\pi^2 \alpha|\xi|^{2}}$, then
$f$ is the heat kernel
$$f(x)=G_{\alpha}(x):= (4\pi \alpha)^{-d/2}
e^{-|x|^{2}/(4\alpha)}, \ \ \ \alpha>0.$$ In this case, for every
$\phi \in \cS(\mathbb{R}^{d})$ we have
$e^{\alpha\Delta}\phi=\phi*G_{\alpha}$.}
\end{example}

In what follows, we let  $F=\{F(\varphi);\varphi \in \cD((0,T)
\times \mathbb{R}^{d})\}$ be a zero-mean Gaussian process with
covariance $J_{f}$, i.e. $\forall \varphi,\psi \in \cD((0,T) \times
\mathbb{R}^{d})$
\begin{equation}
\label{covariance} E(F(\varphi) F(\psi))=\langle \varphi,\psi
\rangle_{0}.
\end{equation}
The Gaussian space $H^{F}$ of the process $F$ is defined as the
closed linear subspace of $L_{2}(\Omega)$, generated by the
variables $\{F(\varphi),\varphi \in \cD((0,T) \times
\mathbb{R}^{d})\}$.

\subsection{The Stochastic Integral}

In this subsection, we summarize the construction of the generalized
stochastic integral $M(\varphi)=\int_{0}^{T}\int_{\mathbb{R}^{d}}
\varphi(t,x)M(dt,dx)$ with respect to the martingale measure $M$
induced by the noise $F$, due to Dalang (see \cite{dalang99}). For
our purposes, it is enough to consider only the case of
deterministic integrands. Here is the construction procedure:

\vspace{2mm}

{\em Step 0.} For each $\varphi \in \cD((0,T) \times R^{d})$, we set
$M(\varphi)=F(\varphi) \in H^{F}$.

\vspace{2mm}

 {\em Step 1.} Let ${\cB}_{b}(\mathbb{R}^{d})$ be the class of all bounded Borel subsets
 of $\mathbb{R}^{d}$ and $\cE^{(d)}$ be the class
of all linear combinations of functions $1_{[0,t] \times A}$, $t \in
[0,T], A \in {\cB}_{b}(\mathbb{R}^{d})$. We endow $\cE^{(d)}$ with
the inner product $\langle \cdot,\cdot \rangle_{0}$ given by formula
(\ref{def-inner-product}) and we denote by $\| \cdot \|_{0}$ the
corresponding norm.

For each $t \in (0,T), A \in {\cB}_{b}(\mathbb{R}^{d})$, there
exists a sequence $\{\varphi_{n}\}_{n} \subset \cD((0,T) \times
\mathbb{R}^{d})$ such that $\varphi_{n} \to 1_{[0,t] \times A}$ and
${\rm supp} \varphi_{n} \subseteq K$ for all $n$ (see
\cite{dalang-frangos98}, p. 190).
% (In Dalang-Frangos 1998, p. 190, this is done first for
%a rectangle $A=[0,x]$, then for a finite union of rectangles, and
%then for an arbitrary $A \in {\cB}_{b}(\mathbb{R}^{d})$.)
By the bounded convergence theorem, $\| \varphi_{n}-1_{[0,t] \times
A}\|_{+} \to 0$, where $\| \cdot \|_{+}$ is the norm defined in Step
2 below. Hence $\| \varphi_{n}-1_{[0,t] \times A}\|_{0} \to 0$ and
$\bE( M(\varphi_{m})-M(\varphi_{n}))^2= \| \varphi_{m}-\varphi_{n}
\|_{0} \to 0$ as $m,n \rightarrow \infty$, i.e. the sequence
$\{M(\varphi_{n})\}_{n}$ is Cauchy in $L_{2}(\Omega)$. A standard
argument shows that its limit does not depend on
$\{\varphi_{n}\}_{n}$. We set $M_{t}(A)=M(1_{[0,t] \times
A})=_{L_{2}(\Omega)}\lim_{n}M(\varphi_{n}) \in H^{F}$.
 (Note that the
process $M=\{M_{t}(A); t \in [0,T],A \in {\cB}(\mathbb{R}^{d}) \}$
is a worthy martingale measure, in the sense of \cite{walsh86}.) We
extend $M$ by linearity to $\cE^{(d)}$. A limiting argument and
relation (\ref{covariance}) shows that, for every $\varphi,\psi \in
{\cE}^{(d)}$
\begin{equation}
\label{covariance-1} \bE(M(\varphi)M(\psi))=\langle \varphi,\psi
\rangle_{0}.
\end{equation}

\vspace{2mm}

 {\em Step 2.} Let $\cP_{+}^{(d)}=\{\varphi:[0,T] \times
\mathbb{R}^{d} \rightarrow \mathbb{R} \ {\rm measurable} ; \|
\varphi \|_{+}<\infty\}$, where $\| \varphi \|_{+}^{2}:=
 \int_{0}^{T} \int_{\mathbb{R}^{d}} \int_{\mathbb{R}^{d}}
|\varphi(t,x)|f(x-y) |\varphi(t,y)|dy dx dt$. We endow
$\cP_{+}^{(d)}$ with the inner product $\langle \cdot, \cdot
\rangle_{0}$ given by formula (\ref{def-inner-product}) and we
denote by $\| \cdot \|_{0}$ the corresponding norm; note that $\|
\cdot \|_{0} \leq \| \cdot \|_{+} $. An argument similar to that
used in the proof of Proposition 2.3, \cite{walsh86}, shows that
$\cE^{(d)}$ is dense in $\cP_{+}^{(d)}$ with respect to $\| \cdot
\|_{+}$, and hence for each $\varphi \in \cP_{+}^{(d)}$,  there
exists a sequence $\{ \varphi_{n} \}_{n} \subset \cE^{(d)}$ such
that $\| \varphi_{n}-\varphi
\|_{+} \to 0$. As in Step 1, %one checks that the sequence $\{M(\varphi_{n})\}_{n}$ is Cauchy in
%$L_{2}(\Omega)$ and its limit does not depend on
%$\{\varphi_{n}\}_{n}$
we set $M(\varphi)=_{L_{2}(\Omega)}\lim_{n}M(\varphi_{n}) \in
H^{F}$. Note that relation (\ref{covariance-1}) holds for every
$\varphi,\psi \in {\cP}_{+}^{(d)}$.

\vspace{2mm}

{\em Step 3.}  Let $\cP_{0}^{(d)}$ be the completion of $\cE^{(d)}$
with respect to $< \cdot, \cdot>_{0}$. The space $\cP_{0}^{(d)}$ is
the largest space of integrands $\varphi$ for which we can define
the stochastic integral $M(\varphi)$. According to
\cite{dalang99},p. 9, the space $\cP_{0}^{(d)}$ has the following
alternative definition.

 Let $\overline{{\cP}}^{(d)}=\{\varphi:[0,T]
\to {\cS}'(\mathbb{R}^{d}) ; \ {\cF}\varphi(t, \cdot) \
\mbox{function} \forall t \in [0,T], \ (t,\xi) \mapsto
{\cF}\varphi(t,\xi) \linebreak \mbox{ measurable}, \| \varphi
\|_{0}<\infty\}$, where $\| \varphi \|_{0}^{2}=\int_{0}^{T}
\int_{\mathbb{R}^{d}} |{\cF}\varphi(t,\xi)|^{2}\mu(d\xi)dt$. Let
${\cE}_{0}^{(d)}=\{\varphi \in {\cP}_{+}^{(d)}; \varphi(t, \cdot)
\in {\cS}(\mathbb{R}^{d}), \ \forall t \in [0,T]\}$ and
$\cP_{0}^{(d)}$ be the closure of $\cE_{0}^{(d)}$ in
$\overline{\cP}^{(d)}$ with respect to $\| \cdot \|_{0}$.
Note that % by relation (10) of \cite{dalang99}, we have
$$\langle \varphi,\psi \rangle_{0}=\int_{0}^{T} \int_{\mathbb{R}^{d}}{\cF}
\varphi(t,\xi) \overline{{\cF} \psi(t,\xi)}\mu(d\xi)dt, \  \ \
\varphi,\psi \in {\cP}_{0}^{(d)}.$$

\noindent For each $\varphi \in {\cP}_{0}^{(d)}$, there exists a
sequence $\{ \varphi_{n} \}_{n} \subset {\cE}_{0}^{(d)}$ such that
$\| \varphi_{n}-\varphi \|_{0} \rightarrow 0$. As in Step 1,
%one checks that the sequence $\{M(\varphi_{n})\}_{n}$ is Cauchy in
%$L_{2}(\Omega)$ and its limit does not depend on
%$\{\varphi_{n}\}_{n}$.
we set $M(\varphi)=_{L_{2}(\Omega)}\lim_{n}M(\varphi_{n}) \in
H^{F}$. Note that relation (\ref{covariance-1}) holds for every
$\varphi,\psi \in {\cP}_{0}^{(d)}$, and hence
\begin{equation}
\label{M-HF} \varphi \mapsto M(\varphi) \ \mbox{is an isometry
between} \ {\cP}_{0}^{(d)} \ \mbox{and} \
%\{M(\varphi); \varphi \in {\cP}_{0}^{(d)} \}=
H^{F}.
\end{equation}

\vspace{3mm}

In summary, the previous construction is based on the diagram
\begin{eqnarray*}
{\cD}((0,T) \times \mathbb{R}^{d}) \subset {\cE}_{0}^{(d)} \subset &
{\cP}_{+}^{(d)} & \subset {\cP}_{0}^{(d)}
\subset \overline{\cP}^{(d)} \\
& \cup & \\
& {\cE}^{(d)} &
\end{eqnarray*} and the following
$3$ approximation techniques:
\begin{itemize}
\item indicator functions can be approximated by functions in
${\cD}((0,T) \times \mathbb{R}^{d})$;

\item functions in ${\cP}_{+}^{(d)}$ can be approximated by
indicator functions;

\item {\em distributions} in ${\cP}_{0}^{(d)}$ can be
approximated by ``smooth'' functions in ${\cP}_{+}^{(d)}$.
\end{itemize}

\noindent In particular, we conclude that $\cD((0,T) \times
\mathbb{R}^{d})$ is dense in ${\cP}_{0}^{(d)}$ with respect to $\|
\cdot \|_{0}$.

%\noindent In particular, we conclude that $\cD((0,T) \times
%\mathbb{R}^{d})$ is dense in ${\cP}_{+}^{(d)}$ (respectively
%${\cP}_{0}^{(d)}$) with respect to $\parallel \cdot
%\parallel_{+}$ (respectively $\parallel \cdot
%\parallel_{0}$).

%\begin{remark} Let $F(t,x):=F([0,t] \times [0,x])$. Then,
%$$H^{F}=\overline{{\rm
%sp}}_{L^{2}(\Omega)}\{F(t,x); (t,x) \in [0,T] \times R^{d}\}.$$
%(The ``$\supseteq$'' inclusion is clear by Step 1. For the
%``$\subseteq$'' inclusion, for each $\varphi \in {\cal D}([0,T]
%\times R^{d})$ we need to find a sequence of step functions
%$\varphi_{n}$ such that $\varphi_{n} \rightarrow \varphi$ and ${\rm
%supp} \varphi_{n} \subseteq K$ for all $n$.)
%\end{remark}

\subsection{Alternative Characterization of the Space
$\cP_{0}^{(d)}$}

As in \cite{ferrante-sanzsole06}, for every $\varphi,\psi \in
{\cD}(\bR^d)$, define
$$\langle \varphi,\psi
\rangle_{0,x}:=\int_{\mathbb{R}^{d}}\int_{\mathbb{R}^{d}}\varphi(x)f(x-y)\psi(y)
dydx= \int_{\bR^d} \cF \varphi(\xi) \overline{\cF \psi(\xi) } \, \mu
(d\xi),$$ and let $\cP_{0,x}^{(d)}$ be the completion of
$\cD(\bR^{d})$ with respect to $\langle \cdot, \cdot \rangle_{0,x}$.
Note that
\begin{equation}
\label{product0-produxt0x} \langle \varphi,\psi
\rangle_{0}=\int_{0}^{T} \langle \varphi(t,\cdot),\psi(t,\cdot)
\rangle_{0,x}dt, \ \ \ \forall \varphi,\psi \in {\cP}_{0}^{(d)}.
\end{equation}

\begin{remark}
\label{P0d-in-L2}
%$$\parallel \varphi \parallel_{0}^{2}=\int_{0}^{T} \parallel
%\varphi(t,\cdot) \parallel_{0,x}^{2}dt, \ \ \ \forall \varphi \in
%{\cP}_{0}^{(d)}.$$
{\rm Note that for every $\varphi \in \cP_{0}^{(d)}$, $\varphi(t,
\cdot) \in \cP_{0,x}^{(d)}$ for a.e. $t \in [0,T]$. Since
$\cE^{(d)}$ is dense in $\cP^{(d)}$, one can prove that the map $t
\mapsto \varphi(t, \cdot)$ is strongly measurable from $[0,T]$ to
$\cP_{0,x}^{(d)}$ (in the sense of Definition on p.649,
\cite{evans98}). Using (\ref{product0-produxt0x}), we conclude that
${\cP}_{0}^{(d)} \subset L_{2}((0,T), {\cP}_{0,x}^{(d)})$ and $\|
\varphi \|_{0}=\| \varphi \|_{L_{2}((0,T),\cP_{0,x}^{(d)})}$,
$\forall \varphi \in \cP_{0}^{(d)}$.}
\end{remark}

\section{The Process Solution}
\subsection{The Equation and its Solution}

We consider the stochastic heat equation with vanishing initial
conditions, written {\em formally} as:

\begin{equation}
\label{heat-eq}u_{t}-\Delta u =  \dot F, \ \ \ \mbox{in} \ (0,T)
\times \mathbb{R}^{d}, \ \ \  u(0,\cdot)= 0.
\end{equation}

\noindent The solution of this equation is defined formally as
follows. Suppose for the moment that $\dot F$ is a random variable
with values in $\cS'(\mathbb{R}^{d+1})$. For every fixed $\omega \in
\Omega$,
 let $\{u(\varphi); \varphi \in \cD((0,T) \times \mathbb{R}^{d})\}$
be the distribution solution of (\ref{heat-eq}). It is known that
\begin{equation}
\label{analogy} u(\varphi)=(G
* \dot F)(\varphi)= \dot F(\varphi
* \tilde G)
\end{equation}
where $G$ is the fundamental solution of the heat equation:
$$G(t,x)=\left\{
\begin{array}{ll} (4 \pi t)^{-d/2} \exp\left( -\frac{|x|^{2}}{4t}\right) & \mbox{if $t>0, x \in R^{d}$} \\
0 & \mbox{if $t \leq 0, x \in R^{d}$}
\end{array} \right.$$
(Since $G \in {\cO}_{C}'(\mathbb{R}^{d+1})$, we have $\varphi *
\tilde G \in {\cS}(\mathbb{R}^{d+1})$ for every $\varphi \in
{\cD}(\mathbb{R}^{d+1})$ and the convolution $G * \dot F$ is well
defined; see Theorem XI, Chapter VII, \cite{schwartz66}.)

\vspace{2mm}

Going back to our framework, we have the following lemma.

\begin{lemma}
\label{psi*Gtilde} If
\begin{equation}
\label{cond-mu} \int_{R^{d}} \frac{1}{1+|\xi|^{2}} \mu(d \xi)<
\infty,
\end{equation}
then: (a) $(G_{tx})^{\verb2~2} \in {\cP}_{+}^{(d)}$ for every $(t,x)
\in [0,T] \times \mathbb{R}^{d}$; (b) $\varphi * \tilde G \in
\cP_0^{(d)}$ for every $\varphi \in \cD((0,T) \times \bR^d)$.
\end{lemma}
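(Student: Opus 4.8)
The plan is to work on the Fourier side throughout and reduce everything to the integrability condition \eqref{cond-mu}. For part (a), I would first compute the spatial Fourier transform of $(G_{tx})^{\verb2~2}$. Since $G_{tx}(s,y) = G(t+s, x+y)$ and reflection exchanges with translation in a controlled way, one gets for each fixed time-slice $s$ that $\cF\big[(G_{tx})^{\verb2~2}(s,\cdot)\big](\xi)$ is, up to constants and a phase factor $e^{i\xi\cdot x}$, equal to $e^{-(t-s)|\xi|^2}$ on the set where $0 \le t-s \le T$ (i.e. where the heat kernel is supported and non-trivial), and zero otherwise. Because $G \geq 0$, the function $(G_{tx})^{\verb2~2}$ is itself nonnegative, so $\|(G_{tx})^{\verb2~2}\|_+^2$ is literally $J_f$ applied to $(G_{tx})^{\verb2~2}$ in absolute value, and by Lemma~\ref{3-equivalences} (the displayed identity following it, extended to $\cP_+^{(d)}$ as in Step 2) this equals
\[
\int_0^T \int_{\bR^d} \big|\cF[(G_{tx})^{\verb2~2}(s,\cdot)](\xi)\big|^2 \, \mu(d\xi)\, ds
\;\le\; C\int_0^T \int_{\bR^d} e^{-2(t-s)|\xi|^2}\, \mu(d\xi)\, ds.
\]
Carrying out the $s$-integral (substituting $r = t-s \in [0,t]$) yields $\int_{\bR^d} \frac{1 - e^{-2t|\xi|^2}}{2|\xi|^2}\,\mu(d\xi)$, which is bounded by $C_T \int_{\bR^d} \frac{1}{1+|\xi|^2}\,\mu(d\xi) < \infty$ (using $\frac{1-e^{-2t|\xi|^2}}{2|\xi|^2} \le \min(t, \frac{1}{2|\xi|^2})$ and a routine comparison near the origin and at infinity). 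This proves (a).

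For part (b), the natural route is to show $\varphi * \tilde G \in \overline{\cP}^{(d)}$ with finite $\|\cdot\|_0$-norm, then identify it as an element of $\cP_0^{(d)}$ by approximation. The convolution is well-defined and smooth by the remark in the excerpt ($G \in \cO_C'(\bR^{d+1})$, so $\varphi*\tilde G \in \cS(\bR^{d+1})$ for $\varphi \in \cD((0,T)\times\bR^d)$). On the Fourier side, $\cF[(\varphi*\tilde G)(t,\cdot)](\xi)$ is computed by writing the convolution out in the time variable: $(\varphi * \tilde G)(t,x) = \int_{\bR} \int_{\bR^d} \varphi(t+s, x+y)\, G(s,y)\, dy\, ds$ (up to sign conventions), whose spatial Fourier transform is $\int_0^\infty e^{-s|\xi|^2}\, \cF\varphi(t+s,\cdot)(\xi)\, ds$. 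Then
\[
\|\varphi * \tilde G\|_0^2 = \int_0^T \int_{\bR^d} \Big| \int_0^\infty e^{-s|\xi|^2} \cF\varphi(t+s,\xi)\, ds \Big|^2 \mu(d\xi)\, dt,
\]
and since $\varphi$ has compact support in $(0,T)\times\bR^d$, the inner time integral is over a bounded range and $\cF\varphi(t+s,\cdot)$ is bounded by a Schwartz bound $C_N(1+|\xi|^2)^{-N}$. Pulling out one factor of the time-integral via Cauchy--Schwarz and using $\int_0^{T} e^{-2s|\xi|^2}\,ds \le \min(T, \frac{1}{2|\xi|^2})$, one bounds the whole expression by $C \int_{\bR^d} \frac{1}{1+|\xi|^2}\, \mu(d\xi) \cdot \sup_t \int_0^T |\cF\varphi(t+s,\xi)|^2 ds$, which is finite by \eqref{cond-mu}. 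Finally, to conclude $\varphi*\tilde G$ lies in $\cP_0^{(d)}$ and not merely in $\overline{\cP}^{(d)}$, I would approximate it in $\|\cdot\|_0$ by elements of $\cE_0^{(d)}$ (e.g. by truncating and mollifying in time, keeping the spatial variable in $\cS(\bR^d)$, using the dominated convergence theorem on the Fourier-side integral together with the uniform bound just established).

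The main obstacle I anticipate is part (b): one must be careful that $\varphi * \tilde G$, though smooth, need not have compact support in time (the heat kernel smears the support forward), so it is genuinely an element of $\overline{\cP}^{(d)}$ reached only in the limit, and the identification with a member of $\cP_0^{(d)}$ requires the approximation argument rather than being immediate. A secondary technical point is keeping the sign/phase conventions in $\tilde G$ and the Fourier transform consistent when turning the space-time convolution into the time-integral representation of $\cF[(\varphi*\tilde G)(t,\cdot)]$; these do not affect the modulus estimates but must be gotten right for the formula to be quotable later. Both parts ultimately rest on the elementary inequality $\int_0^T e^{-2r|\xi|^2}\,dr \le C_T/(1+|\xi|^2)$, which is where condition \eqref{cond-mu} enters decisively.
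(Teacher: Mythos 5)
Your part (a) is correct and is essentially the paper's own argument: since $G\ge 0$ one has $\|\cdot\|_{+}=\|\cdot\|_{0}$ for this integrand, and the Fourier-side computation together with $\int_0^t e^{-2(t-s)|\xi|^2}\,ds\le N(T)/(1+|\xi|^2)$ reduces everything to \eqref{cond-mu}.

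For part (b) you take a somewhat different route, and it contains the one genuine soft spot. The paper never estimates $\|\varphi*\tilde G\|_0$ directly; it verifies the hypotheses of Remark 4 of \cite{dalang99} (continuity of $t\mapsto\cF\psi(t,\xi)$ and a dominating function $k$, square-integrable with respect to $dt\times\mu(d\xi)$), obtaining the bound $|\cF\psi(t,\xi)|\le N/(1+|\xi|^2)$ via the auxiliary function $\phi=(1-\Delta)\psi$, which solves a backward heat equation with source $(1-\Delta)\varphi$ and hence has bounded $L_1(\bR^d)$-norm in $x$. Your direct representation $\cF\psi(t,\xi)=c\int_0^\infty e^{-s|\xi|^2}\cF\varphi(t+s,\xi)\,ds$ and the resulting bound $|\cF\psi(t,\xi)|\le C/(1+|\xi|^2)$ (Cauchy--Schwarz is not even needed: bound $|\cF\varphi|$ by $\sup_t\|\varphi(t,\cdot)\|_{L_1(\bR^d)}$ and integrate the exponential) is a perfectly good, arguably simpler, substitute for that estimate. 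The gap is the final step: finiteness of $\|\psi\|_0$ only places $\psi$ in $\overline{\cP}^{(d)}$, as you note, and your proposed fix --- truncating and mollifying in time plus dominated convergence --- does not address the actual obstruction, namely that the approximants must belong to $\cE_0^{(d)}$, i.e.\ must have finite $\|\cdot\|_{+}$-norm; smoothing in time does nothing toward this, and dominated convergence only yields $\|\cdot\|_0$-convergence once that membership is known. Two ways to close it: (i) simply quote Remark 4 of \cite{dalang99}, whose hypotheses you have in effect already verified --- this is exactly what the paper does; or (ii) note that, since $\varphi*\tilde G\in\cS(\bR^{d+1})$, one has $\sup_{t\in[0,T]}|\psi(t,x)|\le C_M(1+|x|^2)^{-M}$ for every $M$, and (using $f\ge 0$, implicit throughout, and Parseval against $\mu$ applied to the fixed Schwartz majorant $(1+|x|^2)^{-M}$) condition \eqref{cond-mu} then gives $\|\psi\|_{+}<\infty$ directly, so $\psi\in\cE_0^{(d)}\subset\cP_0^{(d)}$ with no approximation needed.
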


\begin{proof} (a) Set $g_{t,x}=
(G_{tx})^{\verb2~2}$. Note that $g_{tx}$ is measurable and $\|
g_{tx} \|_{+}=\| g_{tx} \|_{0}$, since $g_{tx} \geq 0$. We have $\cF
g_{t,x}(s,\xi) = c_d 1_{t>s} \exp ( - \mathrm{i} \xi x- (t-s)
|\xi|^2)$, where $c_d$ is an appropriate constant. Hence
$$\| g_{tx} \|_{0}^{2}=
\int_0^T \int_{\bR^d} | \cF g_{t,x}(s,\xi) |^2 \, \mu (d \xi) \, ds
\leq N(T) \int_{\bR^d} \frac{1}{1+|\xi|^2} \, \mu ( d \xi) < \infty,
$$
where $N(T)$ is a constant depending on $T$.

(b) We apply Remark 4, \cite{dalang99} to the function $\psi=\varphi
* \tilde G$. For this we need to check that: (i) $t \mapsto {\cF}
\psi(t,\xi)$ is continuous $\forall \xi \in R^{d}$; (ii) there
exists a nonnegative function $k(t, \xi)$ which is square-integrable
with respect to $dt \times \mu(d \xi)$ such that $|{\cF}\psi(t,\xi)|
\leq k(t, \xi)$, $\forall t \in [0,T], \xi \in R^{d}$.

(i) is clearly satisfied, and (ii) will follow from (\ref{cond-mu})
once we prove that
\begin{equation} \label{boundedness}
|\cF \psi(t,\xi)| \le \frac{N}{1+|\xi|^{2}}:=k(t,\xi), \ \ \ \forall
t \in [0,T],\ \forall \xi \in \mathbb{R}^{d},
\end{equation}
where $N$ is a constant. Since $(1+|\xi|^{2})|\cF \psi(t,x)|=|{\cF
\phi}(t,\xi)| \leq \| \phi(t,\cdot) \|_{L_1(\mathbb{R}^d)}$ where
$\phi=(1-\Delta)\psi$, relation (\ref{boundedness}) follows if we
prove that $\| \phi(t,\cdot) \|_{L_1(\mathbb{R}^d)} \le N$. Note
that
$$
- \phi_t - \Delta \phi = (1- \Delta) \varphi \quad \text{in} \quad
(0,T) \times \mathbb{R}^d, \quad \phi(T,x) = 0,
$$
since $\psi$ is the unique solution of: $- \psi_t - \Delta \psi =
\varphi \ \text{in} \ (0,T) \times \mathbb{R}^d, \ \psi(T,x) = 0$.
Thus $\phi(s,y) = \int_s^T \int_{\mathbb{R}^d} G(t-s,x-y)
(1-\Delta)\varphi(t,x) \, dx \, dt$. From this we calculate $\|
\phi(s,\cdot) \|_{L_1(\mathbb{R}^d)}$, which turns out to be bounded
(note that $(1-\Delta)\varphi$ has a compact support in $(0,T)
\times \bR^d$).
\end{proof}

A consequence of Lemma \ref{psi*Gtilde}.(b) is the fact that
$M(\varphi
* \tilde G)$ is well-defined for every
$\varphi \in {\cD}((0,T) \times \mathbb{R}^{d})$. By analogy with
(\ref{analogy}) (and using a slight abuse of terminology), we
introduce the following definition:

\begin{definition}
{\rm The process $\{u(\varphi); \varphi \in {\cD}((0,T) \times
\mathbb{R}^{d})\}$ defined by
$$u(\varphi):  = M(\varphi * \tilde G)=
\int_{0}^{T} \int_{\mathbb{R}^{d}} \left( \int_{\mathbb{R}_{+}}
\int_{\mathbb{R}^{d}}\varphi(t+s,x+y)G(s,y)dyds \right) M(dt,dx)$$
is called the {\bf distribution-valued solution} of the stochastic
heat equation (\ref{heat-eq}), with vanishing initial conditions.}
\end{definition}

\begin{theorem}[\cite{dalang99}, \cite{dalang-mueller03}]
\label{dalang-existence-theorem} Let $\{u(\varphi); \varphi \in
{\cD}((0,T) \times \mathbb{R}^{d})\}$ be the distribution-valued
solution of the stochastic heat equation (\ref{heat-eq}). In order
that there exists a jointly measurable process $X=\{X(t,x);t \in
[0,T],x \in \mathbb{R}^{d}\}$ such that
$$u(\varphi)=\int_{0}^{T} \int_{\mathbb{R}^{d}} X(t,x)\varphi(t,x)
dxdt \ \ \ \forall \varphi \in {\cD}((0,T) \times \mathbb{R}^{d}) \
\ \ a.s.$$ it is necessary and sufficient that (\ref{cond-mu})
holds. In this case, $X$ is a modification of the process
$u=\{u(t,x);t \in [0,T],x \in \mathbb{R}^{d}\}$ defined by
\begin{equation}
\label{solution-process} u(t,x):=M((G_{tx})^{\verb2~2})=\int_{0}^{T}
\int_{\mathbb{R}^{d}} G(t-s,x-y)M(ds,dy).
\end{equation}
\end{theorem}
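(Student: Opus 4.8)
The plan is to establish the two directions of the equivalence separately, following the arguments of \cite{dalang99} and \cite{dalang-mueller03}. Throughout, write $g_{t,x}$ for the reflected, translated kernel, so that $g_{t,x}(s,y)=G(t-s,x-y)=(G_{tx})^{\verb2~2}(s,y)$.

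\emph{Sufficiency of (\ref{cond-mu}).} Assume (\ref{cond-mu}) holds. By Lemma \ref{psi*Gtilde}(a) we have $g_{t,x}\in\cP_{+}^{(d)}\subset\cP_{0}^{(d)}$, so $u(t,x):=M(g_{t,x})\in H^{F}$ is well defined for every $(t,x)$. Using the explicit formula $\cF g_{t,x}(s,\xi)=c_{d}\,1_{t>s}\,e^{-\mathrm{i}\xi\cdot x-(t-s)|\xi|^{2}}$ together with dominated convergence --- dominating $|\cF g_{t,x}(s,\xi)|^{2}$, uniformly in $(t,x)$, by the $ds\times\mu(d\xi)$-integrable function $N(T)(1+|\xi|^{2})^{-1}$ --- one checks that $(t,x)\mapsto g_{t,x}$ is continuous from $[0,T]\times\bR^{d}$ into $\cP_{0}^{(d)}$. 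Composing with the linear isometry $M:\cP_{0}^{(d)}\to L_{2}(\Omega)$ of (\ref{M-HF}), the process $(t,x)\mapsto u(t,x)$ is continuous, hence continuous in probability, from $[0,T]\times\bR^{d}$ into $L_{2}(\Omega)$, and therefore admits a jointly measurable modification $X$. (Gaussianity together with a quantitative estimate of $\|g_{t,x}-g_{t',x'}\|_{0}$ would in fact give, via Kolmogorov's criterion, a modification continuous in $(t,x)$.)

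\emph{The integral representation.} A change of variables shows that for $\varphi\in\cD((0,T)\times\bR^{d})$ one has $\varphi*\tilde G=\int_{0}^{T}\!\int_{\bR^{d}}\varphi(t,x)\,g_{t,x}\,dx\,dt$, the right-hand side being a Bochner integral in $\cP_{0}^{(d)}$: the integrand is strongly measurable by the continuity just established, and the integral is absolutely convergent since $\varphi$ has compact support and, by Lemma \ref{psi*Gtilde}(a), $\|g_{t,x}\|_{0}$ is bounded. Applying the bounded linear map $M$ and interchanging it with the Bochner integral gives $u(\varphi)=M(\varphi*\tilde G)=\int_{0}^{T}\!\int_{\bR^{d}}\varphi(t,x)\,u(t,x)\,dx\,dt$ in $L_{2}(\Omega)$. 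Since $\bE|u(t,x)|$ is a constant multiple of $\|g_{t,x}\|_{0}$ and hence bounded, Fubini's theorem shows the pathwise integral $\int_{0}^{T}\!\int_{\bR^{d}}\varphi(t,x)\,X(t,x)\,dx\,dt$ is a.s.\ well defined and integrable, and testing against indicators of events identifies it a.s.\ with the preceding Bochner integral; this yields $u(\varphi)=\int_{0}^{T}\!\int_{\bR^{d}}X(t,x)\varphi(t,x)\,dx\,dt$ a.s.

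\emph{Necessity of (\ref{cond-mu}).} Suppose a jointly measurable $X$ with the stated representation exists, but $\int_{\bR^{d}}(1+|\xi|^{2})^{-1}\mu(d\xi)=\infty$. Letting $\varphi\ge0$ range over a countable family exhausting the compact subsets of $(0,T)\times\bR^{d}$ shows that a.s.\ $X(\cdot,\cdot,\omega)\in L^{1}_{\mathrm{loc}}$, and a Fubini argument combined with the Lebesgue differentiation theorem then provides a point $(t_{0},x_{0})\in(0,T)\times\bR^{d}$ which is, for a.e.\ $\omega$, a Lebesgue point of $X(\cdot,\cdot,\omega)$. Let $\varphi_{n}(t,x)=\chi_{n}(t-t_{0})\psi_{n}(x-x_{0})$ be a standard approximate identity supported in a $1/n$-neighbourhood of $(t_{0},x_{0})$ inside $(0,T)\times\bR^{d}$. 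On the one hand $u(\varphi_{n})=\int X\varphi_{n}\to X(t_{0},x_{0})$ a.s., and since the $u(\varphi_{n})$ are centred Gaussian, almost sure convergence forces $\sup_{n}\mathrm{Var}\,u(\varphi_{n})<\infty$. On the other hand, taking spatial Fourier transforms, $\mathrm{Var}\,u(\varphi_{n})=\|\varphi_{n}*\tilde G\|_{0}^{2}=c_{d}^{2}\int_{0}^{T}\!\int_{\bR^{d}}|\cF\psi_{n}(\xi)|^{2}\Bigl(\int\chi_{n}(t-t_{0})1_{t>s}e^{-(t-s)|\xi|^{2}}dt\Bigr)^{2}\mu(d\xi)\,ds$, and Fatou's lemma gives $\liminf_{n}\mathrm{Var}\,u(\varphi_{n})\ge c\int_{\bR^{d}}\frac{1-e^{-2t_{0}|\xi|^{2}}}{2|\xi|^{2}}\mu(d\xi)=+\infty$, using $\frac{1-e^{-2t_{0}|\xi|^{2}}}{2|\xi|^{2}}\asymp(1+|\xi|^{2})^{-1}$. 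This contradiction proves that (\ref{cond-mu}) must hold.

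\emph{Main obstacle.} The delicate step is the necessity direction: one must legitimately pass from the defining relation ``$u(\varphi)=\int X\varphi$ a.s.\ for each fixed $\varphi$'' to the a.s.\ statement $u(\varphi_{n})\to X(t_{0},x_{0})$ at a single well-chosen point, which requires the a.s.\ local integrability of the paths of $X$ and a Fubini argument to select a common Lebesgue point, and one must verify that $\mathrm{Var}\,u(\varphi_{n})$ genuinely blows up, for which the comparison $\frac{1-e^{-2t_{0}|\xi|^{2}}}{2|\xi|^{2}}\asymp(1+|\xi|^{2})^{-1}$ is the crux. In the sufficiency direction the only technical points are the strong measurability underlying the Bochner integral and the stochastic Fubini interchange, both of which are routine.
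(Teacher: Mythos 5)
The paper does not prove this theorem at all: it is imported from \cite{dalang99} and \cite{dalang-mueller03} and used as a black box, so there is no internal proof to compare yours against. Your reconstruction follows the standard argument of those references and is sound in outline: for sufficiency, $L_2(\Omega)$-continuity of $(t,x)\mapsto M(g_{t,x})$ giving a jointly measurable modification, plus a Bochner-integral/stochastic-Fubini identification of $M(\varphi*\tilde G)$ with $\int X\varphi$; for necessity, an approximate identity at a common Lebesgue point, the fact that an a.s. convergent sequence of centred Gaussians has bounded variances, and the blow-up $\liminf_n \mathrm{Var}\,u(\varphi_n)\ge c\int \frac{1-e^{-2t_0|\xi|^2}}{2|\xi|^2}\mu(d\xi)\asymp\int(1+|\xi|^2)^{-1}\mu(d\xi)=\infty$.

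Two points deserve repair or comment. First, the domination you invoke for the continuity step is misstated: $|\cF g_{t,x}(s,\xi)|^2=c_d^2\,1_{t>s}e^{-2(t-s)|\xi|^2}$ is \emph{not} bounded by $N(T)(1+|\xi|^2)^{-1}$ pointwise in $(s,\xi)$ (take $s$ close to $t$ and $|\xi|$ large); the bound only appears after integrating in $s$. The fix is routine: for each fixed $\xi$ the quantity $\int_0^T|\cF g_{t,x}(s,\xi)-\cF g_{t',x'}(s,\xi)|^2\,ds$ tends to $0$ and is bounded by $C_T(1+|\xi|^2)^{-1}$, so dominated convergence is applied to the $\mu(d\xi)$-integral, not to the $ds\times\mu(d\xi)$-integral. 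Second, the final clause ``$X$ is a modification of $u$'' is established by your argument only for the $X$ you construct; for an \emph{arbitrary} jointly measurable $X$ satisfying the representation one can only conclude $X=u$ up to a $dt\,dx\otimes P$-null set (one may alter $X$ on a Lebesgue-null set of points $(t,x)$ without affecting the representation), so a sentence acknowledging this reading of the statement would make the write-up complete. Finally, note that in the necessity step the identity $\mathrm{Var}\,u(\varphi_n)=\|\varphi_n*\tilde G\|_0^2$ rests on the theorem's standing hypothesis that the distribution-valued solution $u(\varphi)=M(\varphi*\tilde G)$ is defined even though (\ref{cond-mu}) is not assumed there; this is consistent with the framework (it needs only a weaker integrability of $\mu$ than (\ref{cond-mu})), but it is worth saying explicitly since Lemma \ref{psi*Gtilde}(b) in the paper is stated under (\ref{cond-mu}).
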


\begin{definition}
{\rm The process $u=\{u(t,x);t \in [0,T], x \in \mathbb{R}^{d}\}$
defined by (\ref{solution-process}) is called the {\bf process
solution} of the stochastic heat equation (\ref{heat-eq}), with
vanishing initial conditions.}
\end{definition}

\noindent Note that the process $u$ is a zero-mean Gaussian process.
In the present paper, we are interested in examining the germ Markov
property of the process $u$.

\subsection{The Gaussian space}

The Gaussian space $H^{u}$ of the process $u$ is defined as the
closed linear subspace of $L_{2}(\Omega)$, generated by the
variables $\{u(t,x), t \in [0,T], x \in \bR^{d}\}$.
 %We devote this subsection to the analysis of this space.
The next result shows that this space coincides with the space
$H^{F}$.

\begin{lemma}
\label{Hu=HF} We have $H^{u}=H^{F}$.
\end{lemma}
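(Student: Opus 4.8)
The plan is to prove the two inclusions $H^u \subseteq H^F$ and $H^F \subseteq H^u$ separately, exploiting the isometry \eqref{M-HF} between $\cP_0^{(d)}$ and $H^F$ together with the density properties collected at the end of Section 2.3. For the inclusion $H^u \subseteq H^F$: by definition \eqref{solution-process}, each generator $u(t,x) = M((G_{tx})^{\verb2~2})$ lies in $H^F$, since Lemma \ref{psi*Gtilde}(a) guarantees $(G_{tx})^{\verb2~2} \in \cP_+^{(d)} \subseteq \cP_0^{(d)}$, and $M$ maps $\cP_0^{(d)}$ into $H^F$. As $H^u$ is by definition the closed linear span of these generators and $H^F$ is closed, we get $H^u \subseteq H^F$ immediately.

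The substantive direction is $H^F \subseteq H^u$. Here the idea is to show that every generator $F(\varphi) = M(\varphi)$, for $\varphi \in \cD((0,T) \times \bR^d)$, can be approximated in $L_2(\Omega)$ by finite linear combinations of the variables $u(t,x)$, hence lies in $H^u$; since $H^u$ is closed, $H^F \subseteq H^u$ follows. The natural route is duality/convolution: one wants to represent $F(\varphi) = M(\varphi)$ in terms of the process $u$, essentially inverting the relation $u(\cdot) = M(\cdot * \tilde G)$ by applying the heat operator $(\partial_t - \Delta)$ to $u$. Concretely, for $\varphi \in \cD((0,T) \times \bR^d)$ one has $M(\varphi) = M((\psi * \tilde G))$ where $\psi = (\partial_t - \Delta)\varphi$ (extended by zero), using that $\varphi$ solves the backward/forward heat equation with $\varphi$ as its own... more carefully: write $\varphi = G * \psi$ with $\psi = (-\partial_t - \Delta)\varphi$ suitably, so that $M(\varphi) = M(G * \psi)$. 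One then approximates $\psi$ by Riemann-type sums $\sum_i \psi(t_i, x_i)\, 1_{B_i}\, \Delta t_i\, \Delta x_i$, whose images under $M(\cdot * \tilde G)$ are precisely $\sum_i \psi(t_i,x_i) u(t_i,x_i)\,\Delta t_i \Delta x_i \in H^u$ (modulo handling the continuity of $(t,x) \mapsto u(t,x)$ in $L_2(\Omega)$, which follows from the explicit covariance and condition \eqref{cond-mu}). Passing to the limit in $\|\cdot\|_0$ via the isometry \eqref{M-HF} then places $M(\varphi)$ in $H^u$.

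The step I expect to be the main obstacle is making the approximation of $\varphi * \tilde G$ (equivalently, of the reconstruction of $F$ from $u$) rigorous within the space $\cP_0^{(d)}$ — that is, showing the Riemann sums $\sum_i \psi(t_i,x_i)(G_{t_i x_i})^{\verb2~2}\,\Delta t_i \Delta x_i$ converge to $\varphi$ in the $\|\cdot\|_0$ norm. This requires controlling the inner product $\langle (G_{tx})^{\verb2~2}, (G_{t'x'})^{\verb2~2}\rangle_0$ uniformly, which by Lemma \ref{psi*Gtilde}(a) and the spectral representation $\langle \cdot,\cdot\rangle_0 = \int_0^T\int_{\bR^d} \cF\cdot\,\overline{\cF\cdot}\,\mu(d\xi)dt$ reduces to an estimate involving $\int (1+|\xi|^2)^{-1}\mu(d\xi)$; the joint continuity of $(t,x,t',x') \mapsto \langle (G_{tx})^{\verb2~2}, (G_{t'x'})^{\verb2~2}\rangle_0$, again a dominated-convergence argument using \eqref{cond-mu}, then lets one conclude that $u$ has an $L_2(\Omega)$-continuous version and that the Riemann sums are Cauchy. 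Once this convergence is established, the isometry does the rest. An alternative, perhaps cleaner, argument avoids explicit Riemann sums: one shows directly that the orthogonal complement of $H^u$ in $H^F$ is trivial, by noting that if $Z \in H^F \ominus H^u$ then, writing $Z = M(g)$ for some $g \in \cP_0^{(d)}$ via \eqref{M-HF}, orthogonality to every $u(t,x)$ gives $\langle g, (G_{tx})^{\verb2~2}\rangle_0 = 0$ for all $(t,x)$, i.e. $(G * g)(t,x) = 0$ as a function of $(t,x)$ in an appropriate weak sense; since $G$ is the heat kernel and convolution with it (on the relevant half-space) is injective on distributions of this type, $g = 0$, hence $Z = 0$. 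I would present this second argument as the main proof, with the convolution-injectivity of the heat semigroup as the one point requiring care.
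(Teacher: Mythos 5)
Your first argument is, in substance, the paper's own proof. The inclusion $H^{u}\subseteq H^{F}$ is handled identically, and for $H^{F}\subseteq H^{u}$ the paper does exactly what you sketch: for $\varphi\in\cD((0,T)\times\bR^{d})$ it sets $\eta=-\varphi_{t}-\Delta\varphi$, so that $\varphi=\eta*\tilde G$ and $F(\varphi)=M(\eta*\tilde G)=u(\eta)$, and then approximates $\eta*\tilde G$ in $\|\cdot\|_{0}$ by Riemann sums of translated and reflected heat kernels, so that the image under $M$ is a finite linear combination of the $u(t_{m},x_{m})$; the convergence \eqref{conv-norm0} is proved in Appendix A by precisely the dominated-convergence argument you indicate (pointwise convergence of the Fourier transforms plus a dominating function square-integrable against $ds\,\mu(d\xi)$, using \eqref{cond-mu}). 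One slip in your write-up: approximating $\psi$ itself by indicators and claiming their images under $M(\cdot*\tilde G)$ are $\sum_{i}\psi(t_{i},x_{i})u(t_{i},x_{i})\Delta t_{i}\Delta x_{i}$ is not right as stated ($M(1_{B_{i}}*\tilde G)$ is not a multiple of a single $u(t_{i},x_{i})$); what is needed, and what you in fact write a few lines later, is to approximate the convolution $\eta*\tilde G$ directly by the sums $\sum_{i}\eta(t_{i},x_{i})\,|Q_{i}|\,G(t_{i}-\cdot,x_{i}-\cdot)$.

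The alternative that you say you would present as the main proof is where I would push back. The claim that ``convolution with the heat kernel is injective on distributions of this type'' is not an off-the-shelf fact in this setting: an element $g\in\cP_{0}^{(d)}$ is only a path of tempered distributions with $\cF g(s,\cdot)\in L_{2}(\mu)$ for a tempered measure $\mu$, and the assertion that $\langle g,g_{t,x}\rangle_{0}=0$ for all $(t,x)$ forces $\|g\|_{0}=0$ is exactly equivalent to the density of the span of the kernels $g_{t,x}$ in $\cP_{0}^{(d)}$ --- which is the very content that the Riemann-sum argument establishes. So, as written, the second route replaces the real work by an appeal to a fact that still has to be proved, and this is a genuine gap. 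It can be filled, and would then give a legitimately different proof: set $\Phi(t,\xi)=\int_{0}^{t}\cF g(s,\xi)e^{-(t-s)|\xi|^{2}}\,ds$; Cauchy--Schwarz and \eqref{cond-mu} give $\Phi(t,\cdot)\in L_{1}(\mu)$, so the vanishing of $h(t,\cdot)$, the inverse Fourier transform of the finite measure $\Phi(t,\cdot)\mu$, forces $\Phi(t,\xi)=0$ for $\mu$-a.e.\ $\xi$ and every $t$; then $\int_{0}^{t}\cF g(s,\xi)e^{s|\xi|^{2}}\,ds=0$ for all $t$ and $\mu$-a.e.\ $\xi$, whence $\cF g(s,\xi)=0$ for a.e.\ $(s,\xi)$ and $g=0$. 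With such an argument supplied, your preferred route is a clean alternative to Appendix A; without it, the proof is incomplete at its central step.
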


\begin{proof} a) First, we prove that $H^{u} \subseteq H^{F}$. Recall
that $u(t,x)=M((G_{tx})^{\verb2~2})$ and $(G_{tx})^{\verb2~2} \in
{\cP}_{+}^{(d)} \subset {\cP}_{0}^{(d)}$, by Lemma
\ref{psi*Gtilde}.(a). Since ${\cD}((0,T) \times \mathbb{R}^{d})$ is
dense in ${\cP}_{0}^{(d)}$ with respect to $\| \cdot \|_{0}$, there
exists a sequence $\{\varphi_{n}\}_{n \geq 1} \subseteq {\cD}((0,T)
\times \mathbb{R}^{d})$ such that
$\|\varphi_{n}-(G_{tx})^{\verb2~2}\|_{0} \to 0$. Hence
$\bE(M(\varphi_{n})-u(t,x))^{2} \to 0$. But $M(\varphi_{n}) \in
H^{F}$ for all $n$ and therefore $u(t,x) \in H^{F}$.

b) Let $H_{*}^{u}$ be the closed linear subspace of $L_{2}(\Omega)$,
generated by the variables $\{u(\eta), \eta \in \cD((0,T) \times
\bR^{d})\}$. To prove that $H^F \subseteq H_{*}^{u}$, let $\varphi
\in \cD((0,T) \times \mathbb{R}^{d})$ be arbitrary and
$\eta=-\varphi_{t}-\Delta \varphi \in \cD((0,T) \times
\mathbb{R}^{d})$. Then $\varphi=\eta
* \tilde G$ and $F(\varphi)=M(\varphi)=M(\eta * \tilde G)=u(\eta) \in H_{*}^{u}$.

c) Finally, we prove that $H_{*}^{u} \subseteq H^{u}$. Let $\eta \in
{\cD}((0,T) \times \mathbb{R}^{d})$ be arbitrary. Note that
$u(\eta)=M(\varphi)$ where $\varphi=\eta* \tilde G$. Let $K$ be a
compact set such that $\text{supp} \, \eta \subset [0,T] \times K$.

For each $n \geq 1$, let $\{ Q_m^{(n)}, m \in \bZ \}$ be a partition
of $(0,T) \times \bR^d$ such that each $Q_m^{(n)}=R_{m}^{(n)} \times
S_{m}^{(n)}$, where $R_{m}^{(n)}$ is an interval in $[0,T]$ and
$S_{m}^{(n)}$ is a cube in $\bR^d$. Suppose that $\max_{m \in \bZ}
|Q_m^{(n)} | \to 0$ as $n \to \infty$. For each $Q_m^{(n)}$, we
choose $(t_m^{(n)}, x_m^{(n)}) \in Q_m^{(n)}$ such that $t_m^{(n)}
\ge t$ for all $t \in R_m^{(n)}$. We consider the Riemann sum:
$$\varphi_n(s,y) = \sum_{m \in I_n} |Q_m^{(n)}|  G(t_m^{(n)}-s,x_m^{(n)}-y)
\eta(t_m^{(n)},x_m^{(n)}),$$ where $I_n=\{m \in \bZ ; t_m^{(n)} > s,
x_m^{(n)} \in K\}$.
%$$\varphi_n(s,y) = \sum_{ \substack{m \in \bZ \\ t_m^{(n)} > s, x_m^{(n)} \in
%K}} |Q_m^{(n)}|  G(t_m^{(n)}-s,x_m^{(n)}-y)
%\eta(t_m^{(n)},x_m^{(n)}).$$
Clearly $\varphi_n(s,y) \rightarrow \varphi (s,y)$ for every
$(s,y)$. We claim that
\begin{equation}
\label{conv-norm0} \|\varphi_{n}-\varphi \|_0 \rightarrow 0.
\end{equation}

\noindent From here, it follows that
$\bE(M(\varphi_n)-M(\varphi))^{2} \to 0$. This concludes that proof,
since $\varphi_{n}=\sum_{m \in I_n}a_m^{(n)}
(G_{t_m^{(n)},x_m^{(n)}})^{\verb2~2}$ and hence
$M(\varphi_n)=\sum_{m \in I_n}a_m^{(n)} u(t_m^{(n)},x_m^{(n)}) \in
H^{u}$, where $a_{m}^{(n)}=|Q_m^{(n)}| \ \eta(t_m^{(n)},x_m^{(n)})$.
The proof of (\ref{conv-norm0}) is given in Appendix A.
\end{proof}

\subsection{The RKHS}

Let ${\cH}^{u}=\{h_{Y}; h_{Y}(t,x)=\bE (Yu(t,x)),Y \in H^{u}\}$ be
the RKHS of the process $u$, endowed with the inner product $\langle
h_{Y},h_{Z} \rangle_{{\cH}^{u}}:=\bE(YZ), \ Y,Z \in H^{u}$. Note
that any function $h \in \cH^{u}$ is continuous on $[0,T] \times
\mathbb{R}^{d}$ and satisfies $h(0,\cdot)=0$. Moreover, $\cH^u$ is
the closure of $\{R((t,x), \cdot \ );(t,x) \in [0,T] \times \bR^d\}$
with respect to $\| \cdot \|_{\cH^u}$, where
$R((t,x),(s,y))=\bE(u(t,x)u(s,y))$.

By Lemma \ref{Hu=HF} and (\ref{M-HF}), it follows that
$${\cH}^{u}=\{h(t,x)=\bE (M(\varphi)u(t,x)); \varphi \in
{\cP}_{0}^{(d)}\}.$$ Moreover, if $h(t,x)=\bE (M(\varphi)u(t,x))$,
$g(t,x)=\bE (M(\eta)u(t,x))$ with $\varphi,\eta \in \cP_0^{(d)}$,
then $\langle h,g \rangle_{{\cH}^{u}}=\langle \varphi,\eta
\rangle_{0}$.

Let $g_{t,x}= (G_{tx})^{\verb2~2}$. Using the fact that
$u(t,x)=M(g_{tx})$ and (\ref{product0-produxt0x}), we get:
\begin{equation}
\label{formula-h} h(t,x) = \bE (M(\varphi) u(t,x))=\langle \varphi,
g_{tx} \rangle_{0}=
%\int_{0}^{t}\langle \varphi(s,\cdot), g_{tx}(s, \cdot)
%\rangle_{0,x}ds=
\int_0^t \int_{\bR^d} \cF \varphi (s,\xi) \overline{ \cF
g_{t,x}(s,\xi)} \, \mu(d\xi) \, ds.
\end{equation}

\begin{lemma}
\label{key-calculation} Let  $h(t,x) = \bE (M(\varphi) u(t,x)),
\varphi \in \cP_0^{(d)}$. For $\eta \in \cD((0,T) \times \bR^{d})$,
let $\phi$ be a solution to the equation $-\phi_t-\Delta \phi=\eta$
in $(0,T) \times \bR^d$, $\phi(T,\cdot)=0$. Then
$$
\langle h, \eta \rangle_{L_{2}((0,T) \times \bR^d)} =\langle
\varphi, \phi \rangle_{0}.
$$
 In particular, for any
$\phi \in \cD((0,T) \times \bR^d)$, we have
$$\langle h, -\phi_{t}-\Delta \phi \rangle_{L_{2}((0,T) \times \bR^{d})}=\langle
\varphi,\phi \rangle_{0}.$$
\end{lemma}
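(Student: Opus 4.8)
The plan is to reduce the claimed identity to the isometry $\langle M(\varphi)M(\psi)\rangle_{L_2(\Omega)}=\langle\varphi,\psi\rangle_0$ (relation \eqref{covariance-1}) together with the representation \eqref{formula-h} of $h$, and to move the operator $-\partial_t-\Delta$ off $\phi$ and onto the heat semigroup hidden inside $g_{t,x}=(G_{tx})^{\verb2~2}$. First I would write, using \eqref{formula-h} and the definition of $\langle\cdot,\cdot\rangle_0$ via $\mu$ (relation \eqref{product0-produxt0x}),
\[
\langle h,\eta\rangle_{L_2((0,T)\times\bR^d)}=\int_0^T\int_{\bR^d}h(t,x)\,\eta(t,x)\,dx\,dt
=\int_0^T\int_{\bR^d}\Big(\langle\varphi,g_{tx}\rangle_0\Big)\eta(t,x)\,dx\,dt.
\]
Then I would substitute the explicit Fourier transform $\cF g_{t,x}(s,\xi)=c_d\,1_{s<t}\,e^{-\mathrm{i}\xi x}e^{-(t-s)|\xi|^2}$ from Lemma \ref{psi*Gtilde}(a), interchange the $dx\,dt$ integration with the $\mu(d\xi)\,ds$ integration (justified by the integrability of $k(t,\xi)=N/(1+|\xi|^2)$ and the compact support of $\eta$), and recognize that integrating $\overline{\cF g_{t,x}(s,\xi)}\,\eta(t,x)$ against $dx\,dt$ produces exactly $\cF\phi(s,\xi)$: indeed $\phi(s,y)=\int_s^T\int_{\bR^d}G(t-s,x-y)\eta(t,x)\,dx\,dt$ is the stated solution of $-\phi_t-\Delta\phi=\eta$, $\phi(T,\cdot)=0$, and taking Fourier transforms in the space variable gives $\cF\phi(s,\xi)=\int_s^T e^{-(t-s)|\xi|^2}\,\overline{\cF_x[G(t-s,\cdot)](\xi)}^{-1}\cdots$ — more cleanly, $\cF_x\phi(s,\xi)=\int_s^T e^{-(t-s)|\xi|^2}\cF_x\eta(t,\cdot)(\xi)\,dt$, which is precisely $\int_0^T\!\int_{\bR^d}\overline{\cF g_{t,x}(s,\xi)}\,\eta(t,x)\,dx\,dt$ up to the constant $c_d$ absorbed into the normalization. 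Collecting terms yields
\[
\langle h,\eta\rangle_{L_2((0,T)\times\bR^d)}=\int_0^T\int_{\bR^d}\cF\varphi(s,\xi)\,\overline{\cF\phi(s,\xi)}\,\mu(d\xi)\,ds=\langle\varphi,\phi\rangle_0,
\]
which is the first assertion. The second assertion is the special case $\eta=-\phi_t-\Delta\phi$ for $\phi\in\cD((0,T)\times\bR^d)$, for which $\phi$ is itself the (unique, by the maximum principle / uniqueness for the backward heat equation) solution of the stated terminal-value problem, so the formula applies verbatim.

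The main obstacle is making the Fubini interchange and the identification $\cF\phi(s,\xi)=c_d^{-1}\int_0^T\!\int_{\bR^d}\overline{\cF g_{t,x}(s,\xi)}\,\eta(t,x)\,dx\,dt$ fully rigorous, i.e. checking that $\varphi\in\cP_0^{(d)}$ guarantees $\int_0^T\!\int|\cF\varphi(s,\xi)|\cdot|\cF\phi(s,\xi)|\,\mu(d\xi)\,ds<\infty$ so that all the integrals converge absolutely. This follows from Cauchy–Schwarz in $L_2(\mu\otimes ds)$ once we know $\phi\in\cP_0^{(d)}$; and $\phi\in\cP_0^{(d)}$ is exactly the content of Lemma \ref{psi*Gtilde}(b) applied with $\varphi$ there equal to $\eta$ here (since $\phi=\eta*\tilde G$), giving the bound $|\cF\phi(s,\xi)|\le N/(1+|\xi|^2)$, hence $\|\phi\|_0<\infty$ by \eqref{cond-mu}. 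The rest is bookkeeping: expressing everything through the spectral measure $\mu$ and invoking the already-established isometry.
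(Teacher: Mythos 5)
Your proposal is correct and follows essentially the same route as the paper's proof: start from the representation \eqref{formula-h}, apply Fubini (justified by $\varphi\in\cP_0^{(d)}$, $g_{tx}\in\cP_{+}^{(d)}$ and the compact support of $\eta$), identify $\cF\phi(r,\xi)=\int_r^T\int_{\bR^d}\cF g_{s,x}(r,\xi)\,\eta(s,x)\,dx\,ds$ from the Duhamel formula $\phi=\eta*\tilde G$, and invoke Lemma \ref{psi*Gtilde}(b) to ensure $\phi\in\cP_0^{(d)}$ so that $\langle\varphi,\phi\rangle_0$ is well defined. The only blemish is the garbled intermediate expression for $\cF\phi$, which you immediately replace by the correct identity, so no gap remains.
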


\begin{proof}
Using (\ref{formula-h}) and applying Fubini's theorem (since
$\varphi \in \cP_{0}^{(d)}$, $g_{tx} \in \cP_{+}^{(d)}$, and $\eta$
has compact support), we get
\begin{multline}\label{interm-calcul1}
\langle h, \eta \rangle_{L_{2}((0,T) \times \bR^d)}
= \int_0^T \langle h(s,\cdot), \eta(s,\cdot) \rangle_{L_2(\bR^d)} \, ds\\
%= \int_0^T \int_{\bR^d} \int_0^s \int_{\bR^d} \cF \varphi (r,\xi)
%\overline{\cF g_{s,x}(r,\xi)} \, \mu(d\xi) \, dr \,
%\overline{\eta(s,x)} \, dx \, ds\\
= \int_0^T \int_{\bR^d} \cF \varphi (r,\xi) \int_r^T \int_{\bR^d}
\overline{\cF g_{s,x}(r,\xi) \, \eta(s,x)} \, dx \, ds \, \mu(d\xi)
\, dr.
\end{multline}

%Note that $\phi=\eta * \tilde G$, i.e. $\phi(r,y) = \int_r^T
%\int_{\bR^d} g_{sx}(r,y) \eta(s,x) \, dx \, ds$.
%\begin{equation}
%= \int_t^T \int_{\bR^d} G(s-r,x-y) \eta(s,x) \, dx \, ds +
%\int_r^t \int_{\bR^d} G(s-r,x-y) \eta(s,x) \, dx \, ds.
%\end{equation}
%By writing $\int_{r}^{T}=\int_{r}^t+\int_{t}^T$ and using the fact
%that $g_{sx}(r,y) = \int_{\bR^d} g_{sx}(t,z) g_{tz}(r,y) \, dz$,

%Using the classical theory of PDE's (see e.g. \cite{evans98}, p.51)
Note that %(see e.g. \cite{evans98}, p.51)
%one can prove that
for every $r \in (0,T)$ and $y \in \bR^d$
$$
\phi(r,y) = \int_r^T \int_{\bR^d} G(s-r,x-y) \eta(s,x) \, dx \,ds.
%+\int_{\bR^d}G(t-r,x-y) \phi(t,x) \, dx.
$$
%that is $\phi(r,y) =
%\int_r^t \int_{\bR^d} g_{sx}(r,y) \eta(s,x) \, dx \,
%ds+\int_{\bR^d}g_{tx}(r,y) \phi(t,x) \, dx$.
%the first term in \eqref{2006_08_31_02} is equal to
%$$\int_t^T \int_{\bR^d} \int_{\bR^d} G(s-t,x-z) G(t-r,z-y) \, dz \
%\eta(s,x) \, dx \, ds$$
%$$= \int_{\bR^d} G(t-r,z-y) \int_t^T \int_{\bR^d}  G(s-t,x-z)
%\eta(s,x) \, dx \, ds \, dz$$
%$$= \int_{\bR^d} G(t-r,z-y) \phi(t,z) \, dz.$$
Hence
%, for every $r \in (0,t), \xi \in \bR^d$
\begin{equation}
\label{interm-calcul2} \cF \phi(r,\xi)= \int_r^T \int_{\bR^d} \cF
g_{s,x}(r,\xi) \eta(s,x) \, dx \, ds
%+\int_{\bR^d} \cF g_{tx}(r,\xi)
%\phi (t,x) \, dx.
\end{equation}

\noindent From (\ref{interm-calcul1}) and (\ref{interm-calcul2}), we
conclude that
$$
\langle h, \eta \rangle_{L_{2}((0,T) \times \bR^d)}
%\int_0^t \langle h(s,\cdot), \eta(s,\cdot) \rangle_{L_2(\bR^d)} \,ds
= \langle \varphi, \phi \rangle_0
%-$$
%$$
%=\int_0^T \int_{\bR^d} \cF \varphi(r,\xi) \int_{\bR^d}
%\overline{\cF g_{tx}(r,\xi) \phi (tx)} \, dx \, \mu(d\xi) \, dr.
$$
(recall that
%\rangle_{0,x}$ is well-defined, since $\varphi (r, \cdot) \in
%\cP_{0,x}^{(d)}$ by Remark \ref{P0d-in-L2}, and
$\phi \in \cP_0^{(d)}$ by Lemma \ref{psi*Gtilde}). This finishes the
proof.
%, since the last term in the
%equality above equals $\int_{\bR^d} h(t,x) \overline{\phi (t,x)} \,
%dx$, using Fubini's theorem and (\ref{formula-h}).
\end{proof}

\section{The Germ Markov Property}
\subsection{The Definition}

Let $S \subseteq [0,T] \times \mathbb{R}^{d}$ be an arbitrary set.
Let ${\cF}_{S}^{u}$ be the $\sigma$-field generated by the variables
$\{u(t,x);(t,x) \in S\}$, $K^{u}_{S}$ be the closed linear subspace
of $L_{2}(\Omega)$ generated by the variables $\{u(t,x);(t,x) \in
S\}$, and ${\cK}_{S}^{u}$ be the closed linear subspace of
${\cH}^{u}$ generated by the functions $\{R((t,x), \cdot \ ); (t,x)
\in S\}$. Let
%Define the germ $\sigma$-field of $u$
%over $S$, respectively the Gaussian subspace of $u$ over $S$ by:
$${\cG}_{S}^{u}
=\bigcap_{O \ {\rm open}; \ O \supset S}{\cF}_{O}^{u}, \ \ \
H_{S}^{u} =\bigcap_{O \ {\rm open}; \ O \supset S}{K}_{O}^{u}, \ \ \
{\cH}_{S}^{u} =\bigcap_{O \ {\rm open}; \ O \supset
S}{\cK}_{O}^{u}.$$

\begin{definition}
{\rm The process $u=\{u(t,x); (t,x) \in [0,T] \times \bR^d\}$ is
{\bf locally germ Markov} if for every relatively compact open set
$A \subset [0,T] \times \mathbb{R}^{d}$, ${\cG}_{\overline{A}}^{u}$
and ${\cG}_{\overline{A^{c}}}^{u}$ are conditionally independent
given ${\cG}_{\partial A}^{u}$, where $\partial A=\overline A \cap
\overline{A^{c}}$. }
\end{definition}

%{\bf Remarks}: {\rm (i) By Lemma 1.3, \cite{mandrekar83}, the germ Markov property with respect to
%all open sets implies the germ Markov property with respect to
%{\em any} set.

%\vspace{1mm}

%\noindent (ii) By Proposition 2.1, \cite{mandrekar83}, the germ
%Markov property of the process $u$ with respect to $A$ can be
%expressed as: ${\cal F}_{A}^{u} \perp {\cal F}_{A^{c}}^{u} \ | \
%{\cal F}_{\partial A}^{u}$.

%\vspace{1mm}

%\noindent (iv) Since the germ Markov property is symmetric with
%respect to the complement set operation, a locally germ Markov
%process is also germ Markov with respect to the unbounded open
%sets $A$ whose complement $A^{c}$ is bounded.

%\vspace{1mm}

Based on the fact that ${\cG}_{S}^{u}=\sigma(H_{S}^{u})$,
%(see Theorem B.2, \cite{mandrekar83})
 one can prove that $u$ is locally
germ Markov if and only if for every open set $A$,
$H^{u}=H_{\overline{A}}^{u} \oplus (H_{\overline{A^{c}}}^{u} \ominus
H_{\partial A}^{u})$ (see Lemma 1.3, \cite{kunsch79}), or
equivalently, using the isometry between $H^{u}$ and $\cH^{u}$
\begin{equation}
\label{direct-product} {\cH}^{u}={\cH}_{\overline{A}}^{u} \oplus
({\cH}_{\overline{A^{c}}}^{u} \ominus {\cH}_{\partial A}^{u}).
\end{equation}
(Here $\oplus, \ominus$ denote the usual operations on Hilbert
subspaces: if $H$ is a Hilbert space, $S$ is a closed subspace and
$S^{\perp}$ is its orthogonal complement, then we write $H=S \oplus
S^{\perp}$ and $S^{\perp}=H \ominus S$.)

On the other hand, $h(t,x)=\langle h,R((t,x), \cdot \ )
\rangle_{{\cH}^{u}}$ for every $h \in {\cH}^{u}$. Hence
\begin{equation}
\label{support-orthog} {\rm supp} \ h \subseteq B^{c} \ \ \ \mbox{if
and only if} \ \ \ h \in ({\cH}_{B}^{u})^{\perp}.
\end{equation}

\noindent Based on (\ref{direct-product}) and
(\ref{support-orthog}), we have the following fundamental result.

\begin{theorem}[Theorem 5.1, \cite{kunsch79}]
\label{kunsch-theorem} The Gaussian process $u$ is locally germ
Markov if and only if the following two conditions hold:

(i) If $h,g \in {\cH}^{u}$ are such that $({\rm supp} \ h) \cap
({\rm supp} \ g)=\emptyset$ and ${\rm supp} \ h$ is compact,
 then $<h,g>_{{\cH}^{u}}=0$.

(ii)  If $\zeta=h+g \in {\cH}^{u}$, where $h,g$ are such that $({\rm
supp} \ h) \cap ({\rm supp} \ g)=\emptyset$ and ${\rm supp} \ h$ is
compact, then $h,g \in {\cH}^{u}$.
\end{theorem}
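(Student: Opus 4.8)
The plan is to reduce everything to the Hilbert-space reformulation (\ref{direct-product}): by Lemma~1.3 of \cite{kunsch79} and the isometry between $H^u$ and $\cH^u$, the process $u$ is locally germ Markov if and only if $\cH^u=\cH_{\overline A}^u\oplus(\cH_{\overline{A^c}}^u\ominus\cH_{\partial A}^u)$ for every relatively compact open $A$. Writing $N_1=\cH_{\overline A}^u$, $N_2=\cH_{\overline{A^c}}^u$, $N_0=\cH_{\partial A}^u$ (so $N_0\subseteq N_1\cap N_2$), relation (\ref{support-orthog}) gives $N_1^\perp=\{h\in\cH^u:({\rm supp}\,h)\cap\overline A=\emptyset\}$; since $\cH^u=N_1\oplus N_1^\perp$, the decomposition above is equivalent to the single identity $N_2\ominus N_0=N_1^\perp$. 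I will use (\ref{support-orthog}) in the two forms $(\cH_B^u)^\perp=\{h:{\rm supp}\,h\subseteq B^c\}$ and, dually, $\cH_{\overline{A^c}}^u=\{\phi\in\cH^u:{\rm supp}\,\phi\subseteq A\}^\perp$. I flag at once the recurring difficulty: membership of an element in a germ space $\cH_S^u$ does \emph{not} by itself confine its support to $\overline S$, because the reproducing kernels $R((t,x),\cdot)$ have large support; it is precisely conditions (i) and (ii) that repair this.

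For the direction ``Markov $\Rightarrow$ (i) and (ii)'' I would argue as follows. For (i), given $h,g$ with disjoint supports and ${\rm supp}\,h$ compact, choose a relatively compact open $A$ with ${\rm supp}\,h\subseteq A$ and $\overline A\cap{\rm supp}\,g=\emptyset$; then (\ref{support-orthog}) yields $h\perp\cH_{\overline{A^c}}^u$ and $g\in N_1^\perp=N_2\ominus N_0\subseteq\cH_{\overline{A^c}}^u$, so $\langle h,g\rangle_{\cH^u}=0$. For (ii), given $\zeta=h+g\in\cH^u$ with the same support hypotheses, pick $A$ so that $\partial A$ meets neither support and split $\zeta=\zeta_1+\zeta_2$ along $\cH^u=N_1\oplus N_1^\perp$. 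Since ${\rm supp}\,\zeta\subseteq A\cup(\overline A)^c=(\partial A)^c$, (\ref{support-orthog}) gives $\zeta\perp\cH_{\partial A}^u$. The crux is then to show $\zeta_1=h$: for $(t_0,x_0)\in(\overline A)^c$ the kernel $R_0:=R((t_0,x_0),\cdot)$ lies in $N_2$, and the Markov identity $N_2=N_0\oplus N_1^\perp$ forces its $N_1$-projection into $N_0=\cH_{\partial A}^u$, whence $\zeta_1(t_0,x_0)=\langle\zeta,P_{N_1}R_0\rangle_{\cH^u}=0$. Thus ${\rm supp}\,\zeta_1\subseteq\overline A$, and as $\zeta_1=\zeta=h$ on $\overline A$ we conclude $\zeta_1=h\in\cH^u$ and $g=\zeta_2\in\cH^u$.

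For the converse ``(i) and (ii) $\Rightarrow$ Markov'', I fix a relatively compact open $A$ and establish $N_2\ominus N_0=N_1^\perp$ by two inclusions. The inclusion $N_1^\perp\subseteq N_2\ominus N_0$ uses only (i): if ${\rm supp}\,h\subseteq(\overline A)^c$, then every $\phi\in\cH^u$ with ${\rm supp}\,\phi\subseteq A$ has compact support (as $A$ is relatively compact) disjoint from ${\rm supp}\,h$, so (i) gives $h\perp\phi$ and hence $h\in\cH_{\overline{A^c}}^u$, while ${\rm supp}\,h\cap\partial A=\emptyset$ gives $h\in N_0^\perp$. The reverse inclusion $N_2\ominus N_0\subseteq N_1^\perp$ is where (ii) is essential: for $w\in N_2\ominus N_0$, (\ref{support-orthog}) shows ${\rm supp}\,w\cap\partial A=\emptyset$, so $w$ vanishes near $\partial A$ and splits as $w=w\mathbf 1_A+w\mathbf 1_{(\overline A)^c}$ with disjoint supports, the first compact; condition (ii) places both pieces in $\cH^u$, then $w\in\cH_{\overline{A^c}}^u$ forces $w\perp w\mathbf 1_A$ and (i) gives $\langle w\mathbf 1_A,w\mathbf 1_{(\overline A)^c}\rangle_{\cH^u}=0$, so that $\|w\mathbf 1_A\|_{\cH^u}^2=\langle w,w\mathbf 1_A\rangle_{\cH^u}=0$ and therefore ${\rm supp}\,w\subseteq(\overline A)^c$, i.e.\ $w\in N_1^\perp$.

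The main obstacle throughout is exactly the ``converse support'' phenomenon noted above: one cannot read off support containment from germ-space membership directly. I resolve it in the Markov $\Rightarrow$ (ii) step by projecting the reproducing kernel onto $N_1$ and using the Markov splitting $N_2=N_0\oplus N_1^\perp$ to land that projection in $\cH_{\partial A}^u$, against which $\zeta$ is already orthogonal; and in the converse direction by the ``split a function across the boundary'' device, which manufactures disjoint-support pieces so that (ii) provides membership in $\cH^u$ and (i) provides the orthogonality that kills the interior piece.
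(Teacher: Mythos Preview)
The paper does not prove Theorem~\ref{kunsch-theorem}; it is quoted verbatim from K\"unsch (\cite{kunsch79}, Theorem~5.1) and used as a black box. So there is no ``paper's own proof'' to compare against, only the reduction to (\ref{direct-product}) and the observation (\ref{support-orthog}) that precede the statement.

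That said, your argument is correct and is essentially a clean reconstruction of K\"unsch's proof. The reduction to $N_2\ominus N_0=N_1^\perp$ is the right starting point; your use of (\ref{support-orthog}) is legitimate since you only apply it with $B=\overline A,\ \overline{A^c},\ \partial A$, all closed, where the identification $(\cH_B^u)^\perp=\{h:h|_B=0\}$ holds. In the ``Markov $\Rightarrow$ (ii)'' step the projection argument is the standard trick and you carry it out correctly: $R_0\in N_2=N_0\oplus N_1^\perp$ forces $P_{N_1}R_0\in N_0$, and $\zeta\perp N_0$ then gives $\zeta_1|_{(\overline A)^c}=0$. In the converse, the splitting $w=w\mathbf 1_A+w\mathbf 1_{(\overline A)^c}$ is well-defined because $w\in N_0^\perp$ genuinely means $w$ vanishes in a neighbourhood of $\partial A$ (not merely on $\partial A$), so the two pieces have disjoint closed supports with the first compact, and (ii) applies; the orthogonality $w\perp w\mathbf 1_A$ via $w\mathbf 1_A\in N_2^\perp$ together with (i) then kills the interior piece as you say. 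One cosmetic point: when you invoke (i) to get $h\perp\phi$ for $\phi$ supported in $A$, it is $\phi$ (not $h$) that has compact support, so you are applying (i) with the roles reversed---which is of course fine by symmetry of the inner product.
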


In the next two subsections, we will suppose that $f$ is the Bessel
kernel of order $\alpha> 0$ (Example \ref{bessel}), respectively the
Riesz kernel of order $0<\alpha<d$ (Example \ref{riesz}). In both
cases we must have $\alpha>d-2$, in order to have condition
(\ref{cond-mu}) satisfied. Our goal is to prove that conditions (i)
and (ii) of Theorem \ref{kunsch-theorem} hold. For this, we will
assume that $\alpha=2k, k \in \bN_+$ in the case of the the Bessel
kernel, respectively $\alpha=4k, k \in \bN_+$ in the case of the
Riesz kernel (and $d=4k+1$, since $d-2<\alpha<d$).

\subsection{The Case of the Bessel Kernel}

In this subsection we will assume that $f$ is the Bessel kernel,
i.e. $f=B_{\alpha}$ with $\alpha>\max\{0,d-2\}$.

 In this case,
$\cP_{0,x}^{(d)}%=\overline{{\cP}}_{x}^{(d)}
=H_{2}^{-\alpha/2}(\bR^{d})$, where $$H_{2}^{\gamma}(\bR^d)
=\{\varphi \in \cS'(\bR^d); \cF \varphi \ \mbox{is a
function},\|\varphi \|_{\gamma,2}^2=\int_{\bR^d}|\cF \varphi
(\xi)|^{2} (1+|\xi|^{2})^{\gamma}d\xi<\infty\}$$ denotes the
fractional Sobolev space of index $\gamma \in \bR$ (see e.g.
\cite{folland95}, p.191-192).

By Remark \ref{P0d-in-L2}, $\cP_0^{(d)} \subset L_2((0,T),
H_2^{-\alpha/2}(\bR^d))$.
%{\color{blue}
On the other hand, we have the denseness of $\cE_0^{(d)}$ in
$L_2((0,T),H_2^{-\alpha/2}(\bR^d))$ (for a proof, one may see the
proof of Theorem 3.10 in \cite{krylov99}). Thus $\cP_0^{(d)} =
L_2((0,T),H_2^{-\alpha/2}(\bR^d))$ and
$$\| \varphi \|_{0}=\| \varphi
\|_{L_{2}((0,T),H_{2}^{-\alpha/2}(\bR^{d}))}, \ \ \ \forall \varphi
\in \cP_{0}^{(d)}.$$
%}
\noindent For each $t \in [0,T]$, let $\varphi_{1}(t,
\cdot):=(1-\Delta)^{-\alpha/2}\varphi (t, \cdot)$ and note that the
map $\varphi \mapsto \varphi_{1}$ is an isometry between $L_2((0,T),
H_2^{-\alpha/2}(\bR^d))$ and $L_2((0,T), H_2^{\alpha/2}(\bR^d))$.

Since
%{\color{red}
%$\varphi_1 \in L_2((0,T), H_2^{\alpha/2}(\bR^d)) \subset
%L_2((0,T), L_2(\bR^d))$,
%we can identify $\varphi_1$ with an
%element in $L_2((0,T) \times \bR^d)$.
%using the fact that
%}
${\cF} \varphi_{1}(t,\xi)=(1+|\xi|^{2})^{-\alpha/2} \cF
\varphi(t,\xi)$, it is not difficult to see that: $\forall \varphi
\in L_2((0,T), H_2^{-\alpha/2}(\bR^d))$
\begin{equation}
\label{product-varphi1} \langle \varphi_{1}, \phi
\rangle_{L_{2}((0,T) \times \bR^{d})}=\langle \varphi, \phi
\rangle_{0}, \ \ \ \forall \phi \in {\cD}((0,T) \times \bR^{d}).
\end{equation}

%\begin{remark}
%\label{h-sol-varphi1} {\rm Using Lemma \ref{key-calculation} and
%(\ref{product-varphi1}), we conclude that for any $h \in \cH^u$,
%with $h(t,x)=\bE(M(\varphi)u(t,x))$, $\varphi \in \cP_{0}^{(d)}$,
%we have
%$$\langle h, -\phi_{t}-\Delta \phi \rangle_{L_{2}((0,T) \times \bR^{d})}=\langle
%\varphi_{1},\phi \rangle_{L_{2}((0,T) \times \bR^{d})}, \ \ \
%\forall \phi \in {\cD}((0,T) \times \bR^{d}),$$ i.e. $h$ becomes a
%candidate for solution of: $h_{t}-\Delta h=\varphi_1$ in $(0,T)
%\times \bR^d$, $h(0, \cdot)=0$.}
%\end{remark}

To investigate the relationship between $h$ and $\varphi_1$,
%(as suggested by Remark \ref{h-sol-varphi1}),
we need some general results from the $L_p$-theory of parabolic
equations.

Recall that $H_2^{0}(\bR^d)=L_2(\bR^d)$, and $H_2^{\gamma}(\bR^d)
\subset H_2^{\gamma'}(\bR^d)$ if $\gamma>\gamma'$. For every
$\gamma,\beta \in \bR$, $(1-\Delta)^{\gamma/2}$ is a unitary
isomorphism between $H_2^{\beta}(\bR^d)$ and
$H_{2}^{\gamma-\beta}(\bR^d)$. In particular,
$(1-\Delta)^{\gamma/2}$ is a unitary isomorphism between
$H_2^{\gamma}(\bR^d)$ and $L_{2}(\bR^d)$. For every $v \in
H_2^{\gamma}(\bR^d)$, $\phi \in \cD(\bR^d)$, we denote
$$(v,\phi)=\int_{\bR^d}
[(1-\Delta)^{\gamma/2}v](x)\overline{[(1-\Delta)^{-\gamma/2}\phi](x)}dx.$$
Note that, if $v \in H_2^{\gamma}(\bR^d)$ and $\gamma \geq 0$, then
$(v,\phi)=\langle v,\phi \rangle_{L_{2}(\bR^d)}$ for all $\phi \in
\cD(\bR^d)$.

\begin{definition}
{\rm If $t \mapsto v(t, \cdot)$ is a function from $[0,T]$ to
$H_2^{\gamma}(\bR^d)$, with $\gamma \in \bR$, we say that $v$ is a
{\bf solution} of
$$d v = ( \Delta v + g ) d t \quad \text{in} \quad (0,T) \times
\bR^d, \quad v(0,\cdot) = 0$$ if for any $t \in (0,T)$ and for any
$\psi \in \cD(\bR^d)$, we have}
\begin{equation}\label{Krylov-eq}
(v(t,\cdot), \psi) = \int_0^t (v(s,\cdot), \Delta \psi)ds +\int_0^t
(g(s,\cdot), \psi) ds.
\end{equation}
%{\color{blue}
We write $v \in \cH_{2,0}^{\gamma}(T)$ if $v_{xx} \in L_2((0,T),
H_2^{\gamma-2}(\bR^d))$, $v(0,\cdot) = 0$, and there exists $g \in
L_2((0,T), H_2^{\gamma-2}(\bR^d))$ satisfying \eqref{Krylov-eq}. By
$\|v\|_{\cH_{2,0}^{\gamma}(T)}$ we mean
$$
\|v\|_{\cH_{2,0}^{\gamma}(T)} = \|v_{xx}\|_{L_2((0,T),
H_2^{\gamma-2}(\bR^d))} + \|g\|_{L_2((0,T), H_2^{\gamma-2}(\bR^d))}.
$$
%}

\end{definition}

\begin{remark}
{\rm It is known that (\ref{Krylov-eq}) implies that: $\forall t \in
(0,T)$, $\forall \phi \in \cD((0,T) \times \bR^d)$
$$(v(t,\cdot),
\phi(t,\cdot))=\int_0^t (v(s,\cdot), (\partial/\partial t+\Delta)
\phi(s,\cdot))ds +\int_0^t (g(s,\cdot), \phi(s,\cdot)) ds$$ (see
e.g. Proposition 10, \cite{ferrante-sanzsole06} for a stochastic
version of this result). In particular, (\ref{Krylov-eq}) implies
that: $\forall \phi \in \cD((0,T) \times \bR^d)$
\begin{equation}
\label{Krylov-eq2} \int_0^T (v(s,\cdot), (-\partial/\partial
t-\Delta) \phi(s,\cdot))ds =\int_0^T (g(s,\cdot), \phi(s,\cdot)) ds
\end{equation}
(by taking $t=T$ and using the fact that $\phi(T, \cdot)=0$).}
\end{remark}

\begin{theorem}[\cite{krylov99}, \cite{krylov01}]
\label{theoremA} Given $g \in L_2((0,T), H_2^{\gamma}(\bR^d))$,
$\gamma \in \bR$, there exists a unique solution
%{\color{blue}
$v \in \cH_{2,0}^{\gamma+2}(T)$ to the equation
%}
$$d v = ( \Delta v + g ) d t \quad \text{in} \quad (0,T) \times \bR^d,
\quad v(0,\cdot) = 0.$$
%{\color{blue}
Moreover, there exists a constant $N$ (independent of $v$) such that
$$
%\begin{equation}\label{estimateA}
\| v \|_{\cH_{2,0}^{\gamma+2}(T)} \le N \| g\|_{L_2((0,T),
L_2(\bR^d))}.
$$
%\end{equation}
%}
\end{theorem}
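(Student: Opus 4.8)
The plan is to deduce the existence, uniqueness and estimate for the heat equation from the classical $L_p$-theory of Krylov, specialized to $p=2$. For the existence part, the natural approach is to solve the equation in Fourier space. Given $g \in L_2((0,T), H_2^{\gamma}(\bR^d))$, I would set
\[
\cF v(t,\xi) = \int_0^t e^{-(t-s)|\xi|^2} \cF g(s,\xi) \, ds,
\]
which is the Fourier-side Duhamel formula for $v_t = \Delta v + g$. One then checks that $\cF v(t,\cdot)$ is a measurable function for each $t$, that $v(0,\cdot)=0$, that $v_{xx} \in L_2((0,T), H_2^{\gamma}(\bR^d))$ (note the gain of two derivatives, so $v \in \cH_{2,0}^{\gamma+2}(T)$), and that $v$ satisfies the weak formulation \eqref{Krylov-eq}. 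The second moment bound on $v_{xx}$ and on $g$ in the $\cH_{2,0}^{\gamma+2}(T)$-norm follows from Plancherel together with the elementary inequality
\[
|\xi|^2 \int_0^t e^{-(t-s)|\xi|^2} |\cF g(s,\xi)| \, ds \le \left( \int_0^t |\xi|^4 e^{-2(t-s)|\xi|^2} \, ds \right)^{1/2} \|\cF g(\cdot,\xi)\|_{L_2(0,T)}
\]
and $\int_0^t |\xi|^4 e^{-2(t-s)|\xi|^2}\, ds \le |\xi|^2/2$, integrated against $(1+|\xi|^2)^{\gamma} d\xi$ and over $t \in [0,T]$. Actually, for the stated bound with $\|g\|_{L_2((0,T),L_2(\bR^d))}$ on the right-hand side one takes $\gamma = 0$ (which is all that is used later); the general-$\gamma$ version is obtained by applying the $\gamma=0$ case to $(1-\Delta)^{\gamma/2} v$ and $(1-\Delta)^{\gamma/2} g$, since $(1-\Delta)^{\gamma/2}$ commutes with the equation.

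For uniqueness, suppose $v \in \cH_{2,0}^{\gamma+2}(T)$ solves the homogeneous equation ($g=0$) with zero initial data. The standard device is an energy estimate: from \eqref{Krylov-eq} with $\gamma+2 \ge 2$ one can justify that $t \mapsto \|v(t,\cdot)\|_{L_2(\bR^d)}^2$ is absolutely continuous with
\[
\frac{d}{dt}\|v(t,\cdot)\|_{L_2(\bR^d)}^2 = -2\|v_x(t,\cdot)\|_{L_2(\bR^d)}^2 \le 0,
\]
so $\|v(t,\cdot)\|_{L_2} = 0$ for all $t$. (When $\gamma+2 < 2$ one first mollifies, or equivalently works with $(1-\Delta)^{(\gamma+2)/2} v$, reducing to the case just treated; alternatively, one notes that on the Fourier side the homogeneous equation forces $\partial_t \cF v(t,\xi) = -|\xi|^2 \cF v(t,\xi)$ in the distributional sense in $t$ for a.e.\ $\xi$, hence $\cF v(t,\xi) = 0$ given $\cF v(0,\xi)=0$.) This is really the only place where one has to be a little careful about the regularity class, so I expect the main technical obstacle to be the rigorous justification of the energy identity (or the distributional ODE in $t$) at the low regularity allowed by the definition of $\cH_{2,0}^{\gamma+2}(T)$ — everything else is a direct Fourier-transform computation. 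Since this theorem is quoted from \cite{krylov99}, \cite{krylov01}, it is legitimate to simply cite those references for the full argument and record here only the Fourier-analytic proof in the case $p=2$, $\gamma=0$, which is the one actually needed in the sequel.
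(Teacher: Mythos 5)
The paper does not actually prove this theorem --- it is imported verbatim from Krylov's $L_p$-theory (\cite{krylov99}, \cite{krylov01}) --- so your decision to sketch the $p=2$ Fourier/Duhamel argument and otherwise defer to the references is reasonable, and the overall route (Duhamel formula on the Fourier side, Plancherel, reduction of general $\gamma$ to $\gamma=0$ by conjugating with $(1-\Delta)^{\gamma/2}$, energy/Fourier ODE argument for uniqueness) is the standard one. One factual correction: it is not true that only $\gamma=0$ is used later --- Theorem \ref{h-unique-alpha+2} and Corollary \ref{corollary20070928} invoke the result with $\gamma=\alpha/2=k\ge 1$ --- but your lifting argument covers that, and you are right that the $L_2(\bR^d)$ on the right-hand side of the stated estimate should be read as $H_2^{\gamma}(\bR^d)$ (equivalently, as the $\gamma=0$ case).

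However, the one quantitative step you actually write out does not give the claimed bound. Plain Cauchy--Schwarz yields
\[
\bigl| |\xi|^2 \cF v(t,\xi) \bigr| \le \Bigl( \int_0^t |\xi|^4 e^{-2(t-s)|\xi|^2}\,ds \Bigr)^{1/2} \|\cF g(\cdot,\xi)\|_{L_2(0,T)} \le \frac{|\xi|}{\sqrt 2}\,\|\cF g(\cdot,\xi)\|_{L_2(0,T)},
\]
and after integrating in $t$ and against $(1+|\xi|^2)^{\gamma}d\xi$ this controls $\|v_{xx}\|_{L_2((0,T),H_2^{\gamma}(\bR^d))}$ only by $\|g\|_{L_2((0,T),H_2^{\gamma+1}(\bR^d))}$: you lose exactly one derivative, so neither the membership $v\in\cH_{2,0}^{\gamma+2}(T)$ nor the stated norm estimate follows from the inequality as written. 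The fix is standard: view $t\mapsto |\xi|^2\cF v(t,\xi)$ as the time-convolution of $\cF g(\cdot,\xi)$ with the kernel $s\mapsto |\xi|^2 e^{-s|\xi|^2}$, whose $L_1(0,\infty)$-norm equals $1$, and apply Young's inequality ($L_1 * L_2\subset L_2$) to get $\| |\xi|^2 \cF v(\cdot,\xi)\|_{L_2(0,T)} \le \|\cF g(\cdot,\xi)\|_{L_2(0,T)}$ uniformly in $\xi$; equivalently, split $|\xi|^2 e^{-(t-s)|\xi|^2} = (|\xi|^2 e^{-(t-s)|\xi|^2})^{1/2}(|\xi|^2 e^{-(t-s)|\xi|^2})^{1/2}$, apply Cauchy--Schwarz with this weight, and use Fubini. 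With that replacement (and the uniqueness argument you indicate, which is fine once justified at the regularity allowed by $\cH_{2,0}^{\gamma+2}(T)$), the sketch becomes correct and agrees with the argument in the cited references.
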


We now return to our framework.
%The next theorem gives the precise meaning of the statement
%made in Remark \ref{h-sol-varphi1}.

\begin{theorem}\label{h-unique-alpha+2}
Let $h(t,x) = \bE (M(\varphi) u(t,x))$, where $u$ is defined in
\eqref{solution-process} and $\varphi \in \cP_0^{(d)}$. Set
$\varphi_1=(1-\Delta)^{-\alpha/2}\varphi$. Then $h$ is the unique
solution in
%{\color{blue}
$\cH_{2,0}^{\alpha/2+2}(T)$ to the equation
%}
\begin{equation}
\label{Krylov-equation} d h = (\Delta h + \varphi_1) \, dt \ \ \ in
\ (0,T) \times \bR^{d}, \ \ \
 \quad h(0,\cdot) = 0.
 \end{equation}
\end{theorem}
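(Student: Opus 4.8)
The plan is to show that $h$ solves \eqref{Krylov-equation} in the sense of \eqref{Krylov-eq}, that it lies in the regularity class $\cH_{2,0}^{\alpha/2+2}(T)$, and finally that it is the \emph{unique} such solution, the last point being immediate from Theorem \ref{theoremA} once we check that $\varphi_1 \in L_2((0,T), H_2^{\alpha/2}(\bR^d))$. This integrability is the easy part: by Remark \ref{P0d-in-L2} and the identification $\cP_0^{(d)} = L_2((0,T), H_2^{-\alpha/2}(\bR^d))$ we have $\varphi \in L_2((0,T), H_2^{-\alpha/2}(\bR^d))$, and since $\varphi \mapsto \varphi_1 = (1-\Delta)^{-\alpha/2}\varphi$ is an isometry onto $L_2((0,T), H_2^{\alpha/2}(\bR^d))$, we get $\varphi_1 \in L_2((0,T), H_2^{\alpha/2}(\bR^d))$. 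Applying Theorem \ref{theoremA} with $\gamma = \alpha/2$ and $g = \varphi_1$ yields a unique solution $v \in \cH_{2,0}^{\alpha/2+2}(T)$ of \eqref{Krylov-equation}; it then suffices to identify $v$ with $h$.

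The core of the argument is therefore to verify that $h$ itself satisfies the weak formulation \eqref{Krylov-eq} with $g = \varphi_1$, i.e. that for every $t \in (0,T)$ and $\psi \in \cD(\bR^d)$,
\begin{equation*}
(h(t,\cdot), \psi) = \int_0^t (h(s,\cdot), \Delta \psi)\, ds + \int_0^t (\varphi_1(s,\cdot), \psi)\, ds.
\end{equation*}
Here I would use the distributional characterization already available: by Remark following Theorem \ref{theoremA}, condition \eqref{Krylov-eq} is equivalent to the test-function identity \eqref{Krylov-eq2}, namely $\int_0^T (h(s,\cdot), (-\partial/\partial t - \Delta)\phi(s,\cdot))\, ds = \int_0^T (\varphi_1(s,\cdot), \phi(s,\cdot))\, ds$ for all $\phi \in \cD((0,T) \times \bR^d)$. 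But this is exactly the content of Lemma \ref{key-calculation} combined with \eqref{product-varphi1}: Lemma \ref{key-calculation} gives $\langle h, -\phi_t - \Delta \phi \rangle_{L_2((0,T)\times\bR^d)} = \langle \varphi, \phi \rangle_0$ for $\phi \in \cD((0,T)\times\bR^d)$, while \eqref{product-varphi1} gives $\langle \varphi, \phi \rangle_0 = \langle \varphi_1, \phi \rangle_{L_2((0,T)\times\bR^d)}$. Since $h(s,\cdot) \in \cH^u$ is continuous (in fact, as we will see, $h \in \cH_{2,0}^{\alpha/2+2}(T)$, so $h(s,\cdot) \in H_2^{\alpha/2+2}(\bR^d) \subset L_2(\bR^d)$ for a.e.\ $s$) and $\varphi_1(s,\cdot) \in H_2^{\alpha/2}(\bR^d) \subset L_2(\bR^d)$, the pairings $(\cdot,\cdot)$ in \eqref{Krylov-eq2} reduce to the $L_2$ inner products, and \eqref{Krylov-eq2} follows.

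What remains — and what I expect to be the main obstacle — is establishing the quantitative regularity $h \in \cH_{2,0}^{\alpha/2+2}(T)$ directly, i.e.\ that $h(0,\cdot) = 0$, that $h_{xx} \in L_2((0,T), H_2^{\alpha/2}(\bR^d))$, and that the equation holds with a right-hand side in that same space. The cleanest route is to avoid proving this by hand: let $v \in \cH_{2,0}^{\alpha/2+2}(T)$ be the solution produced by Theorem \ref{theoremA} for the data $g = \varphi_1$, and show $h = v$. For this, note that $v$ also satisfies \eqref{Krylov-eq2}; thus $w := h - v$ satisfies $\int_0^T (w(s,\cdot), (-\partial/\partial t - \Delta)\phi(s,\cdot))\, ds = 0$ for all $\phi \in \cD((0,T)\times\bR^d)$, together with the vanishing initial condition. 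Since $h$ and $v$ both have the property that $s \mapsto (w(s,\cdot), \psi)$ is (absolutely) continuous and $w$ agrees with a distributional solution of the homogeneous backward-dual problem, a uniqueness argument — either invoking the uniqueness clause of Theorem \ref{theoremA} once we know $h \in \cH_{2,0}^{\alpha/2+2}(T)$, or a direct duality/density argument testing against the solutions of $-\phi_t - \Delta\phi = \eta$ for arbitrary $\eta \in \cD((0,T)\times\bR^d)$ as in Lemma \ref{key-calculation} — forces $w = 0$. The delicate point is the membership $h \in \cH_{2,0}^{\alpha/2+2}(T)$ itself; I would obtain it a posteriori from $h = v$, so the logical order is: (1) produce $v$ via Theorem \ref{theoremA}; (2) check $v$ satisfies \eqref{Krylov-eq2}; (3) check $h$ satisfies \eqref{Krylov-eq2} via Lemma \ref{key-calculation} and \eqref{product-varphi1}; (4) deduce $h = v$ by a duality uniqueness argument for the dual equation, hence $h \in \cH_{2,0}^{\alpha/2+2}(T)$ and $h$ solves \eqref{Krylov-equation} uniquely.
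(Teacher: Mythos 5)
Your proposal is correct and follows essentially the same route as the paper: apply Theorem \ref{theoremA} with $g=\varphi_1\in L_2((0,T),H_2^{\alpha/2}(\bR^d))$ to obtain the unique solution $v\in\cH_{2,0}^{\alpha/2+2}(T)$, then identify $h$ with $v$ by testing against arbitrary $\eta\in\cD((0,T)\times\bR^d)$ via the backward dual problem $-\phi_t-\Delta\phi=\eta$, $\phi(T,\cdot)=0$, combining Lemma \ref{key-calculation}, \eqref{product-varphi1} and \eqref{Krylov-eq2}. Your a posteriori deduction of $h\in\cH_{2,0}^{\alpha/2+2}(T)$ from $h=v$ is exactly the paper's logic.
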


\begin{proof}
%{\color{blue}
Recall that $\varphi_1 \in L_{2}((0,T), H_2^{\alpha/2}(\bR^d))$.
Thus by Theorem \ref{theoremA} there exists a unique solution $v \in
\cH_{2,0}^{\alpha/2+2}(T)$ to the equation \eqref{Krylov-equation}.

We are now proving that $h=v$. This will follow once we prove that
$$
\langle h,\eta \rangle_{L_2((0,T) \times \bR^d)} = \langle v, \eta
\rangle_{L_2((0,T)\times\bR^d)}, \quad \forall{\eta \in \cD((0,T)
\times \bR^d)}.
$$
Let $\eta \in \cD((0,T) \times \bR^d)$ be arbitrary and $\phi$ be
the unique solution of
$$-\phi_t-\Delta \phi=\eta \quad \mbox{in} \ (0,T) \times \bR^{d}, \quad \phi(T,\cdot) = 0.$$
By Lemma \ref{key-calculation} and (\ref{product-varphi1})
$$
\langle h,\eta \rangle_{L_2((0,T) \times \bR^d)} =\langle \varphi,
\phi \rangle_{0} =\langle \varphi_1, \phi
\rangle_{L_2((0,T)\times\bR^d)}.
$$
On the other hand, by \eqref{Krylov-eq2} we have
$$
\langle v, \eta \rangle_{L_2((0,T)\times\bR^d)} = \langle \varphi_1,
\phi \rangle_{L_2((0,T) \times \bR^d)}
$$
This concludes the proof.
%}
\end{proof}

%{\color{blue}
\begin{corollary}\label{corollary20070928}
If $f$ is the Bessel kernel of order $\alpha$, then
$$
\cH^u = \cH_{2,0}^{\alpha/2+2}(T)
$$
and the norms in the two spaces are equivalent.
\end{corollary}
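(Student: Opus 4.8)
\textbf{Proof proposal for Corollary \ref{corollary20070928}.}

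The plan is to combine Theorem \ref{h-unique-alpha+2} with the characterization of $\cH^u$ from Section 3.3 and the isometry properties already established. First I would recall that, by Lemma \ref{Hu=HF} and \eqref{M-HF}, every $h \in \cH^u$ is of the form $h(t,x) = \bE(M(\varphi) u(t,x))$ for some $\varphi \in \cP_0^{(d)}$, and conversely every such function lies in $\cH^u$. Theorem \ref{h-unique-alpha+2} then tells us that for each such $\varphi$, the associated $h$ is the unique element of $\cH_{2,0}^{\alpha/2+2}(T)$ solving \eqref{Krylov-equation} with right-hand side $\varphi_1 = (1-\Delta)^{-\alpha/2}\varphi$. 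Thus the map $\varphi \mapsto h$ factors as $\varphi \mapsto \varphi_1 \mapsto h$, where the first arrow is the isometry between $L_2((0,T), H_2^{-\alpha/2}(\bR^d)) = \cP_0^{(d)}$ and $L_2((0,T), H_2^{\alpha/2}(\bR^d))$ noted just before \eqref{product-varphi1}, and the second arrow is the solution operator of Theorem \ref{theoremA}.

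Next I would establish the set-theoretic equality $\cH^u = \cH_{2,0}^{\alpha/2+2}(T)$ by showing both inclusions. The inclusion $\cH^u \subseteq \cH_{2,0}^{\alpha/2+2}(T)$ is immediate from Theorem \ref{h-unique-alpha+2}: every $h \in \cH^u$ solves \eqref{Krylov-equation} for some $\varphi_1 \in L_2((0,T), H_2^{\alpha/2}(\bR^d))$ and hence belongs to $\cH_{2,0}^{\alpha/2+2}(T)$. For the reverse inclusion, given any $v \in \cH_{2,0}^{\alpha/2+2}(T)$, by definition there is some $g \in L_2((0,T), H_2^{\alpha/2}(\bR^d))$ with $v$ solving $dv = (\Delta v + g)\,dt$, $v(0,\cdot)=0$; set $\varphi = (1-\Delta)^{\alpha/2} g \in L_2((0,T), H_2^{-\alpha/2}(\bR^d)) = \cP_0^{(d)}$, so that $\varphi_1 = g$. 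Then the function $h$ associated to this $\varphi$ via Theorem \ref{h-unique-alpha+2} solves the same equation with the same right-hand side; by the uniqueness clause of Theorem \ref{theoremA} we get $v = h \in \cH^u$.

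Finally I would address the equivalence of norms. On $\cH^u$ the norm is $\|h\|_{\cH^u} = \|\varphi\|_0 = \|\varphi_1\|_{L_2((0,T), H_2^{\alpha/2}(\bR^d))}$, using the RKHS isometry and \eqref{product-varphi1} together with Remark \ref{P0d-in-L2}. On the other hand, $\|h\|_{\cH_{2,0}^{\alpha/2+2}(T)} = \|h_{xx}\|_{L_2((0,T), H_2^{\alpha/2-2}(\bR^d))} + \|\varphi_1\|_{L_2((0,T), H_2^{\alpha/2-2}(\bR^d))}$. One direction of the equivalence follows from Theorem \ref{theoremA}: applying it with $\gamma = \alpha/2 - 2$ and noting that $(1-\Delta)^{\alpha/4-1}$ is a unitary isomorphism $H_2^{\alpha/2-2}(\bR^d) \to L_2(\bR^d)$, one gets $\|h\|_{\cH_{2,0}^{\alpha/2+2}(T)} \le N \|\varphi_1\|_{L_2((0,T), H_2^{\alpha/2-2}(\bR^d))}$, and since $H_2^{\alpha/2}(\bR^d) \subset H_2^{\alpha/2-2}(\bR^d)$ the right side is $\le N\|\varphi_1\|_{L_2((0,T), H_2^{\alpha/2}(\bR^d))} = N\|h\|_{\cH^u}$. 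The opposite inequality, $\|h\|_{\cH^u} \le N'\|h\|_{\cH_{2,0}^{\alpha/2+2}(T)}$, is the part I expect to be the main obstacle: one needs an \emph{a priori} estimate recovering the stronger $H_2^{\alpha/2}$-norm of the forcing term from the $\cH_{2,0}^{\alpha/2+2}(T)$-norm of the solution. This should follow by reading \eqref{Krylov-eq} (equivalently, the PDE $\varphi_1 = h_t - \Delta h$ interpreted distributionally) and estimating $\|\varphi_1(s,\cdot)\|_{H_2^{\alpha/2}} \le \|h_t(s,\cdot)\|_{H_2^{\alpha/2}} + \|\Delta h(s,\cdot)\|_{H_2^{\alpha/2}}$, then invoking the full strength of the parabolic regularity estimate in \cite{krylov99}, \cite{krylov01} (which controls $h_t$ in $L_2((0,T),H_2^{\gamma})$ as well as $h_{xx}$) at the regularity level $\gamma = \alpha/2$; I would cite the relevant estimate there rather than reprove it. Once both inequalities are in hand, the two norms are equivalent, completing the proof.
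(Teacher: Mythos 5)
Your set-theoretic argument is exactly the paper's: the inclusion $\cH^u \subseteq \cH_{2,0}^{\alpha/2+2}(T)$ from Theorem \ref{h-unique-alpha+2}, and the reverse inclusion by taking $\varphi = (1-\Delta)^{\alpha/2} g \in \cP_0^{(d)}$ and invoking uniqueness. That part is correct.

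The norm-equivalence part, however, contains a genuine error that stems from mis-stating the norm of $\cH_{2,0}^{\alpha/2+2}(T)$. By the paper's definition, $\|v\|_{\cH_{2,0}^{\gamma}(T)} = \|v_{xx}\|_{L_2((0,T),H_2^{\gamma-2}(\bR^d))} + \|g\|_{L_2((0,T),H_2^{\gamma-2}(\bR^d))}$, so with $\gamma = \alpha/2+2$ the relevant regularity level is $H_2^{\alpha/2}$, not $H_2^{\alpha/2-2}$ as you wrote. This has two consequences. First, your claimed inequality $\|h\|_{\cH_{2,0}^{\alpha/2+2}(T)} \le N\,\|\varphi_1\|_{L_2((0,T),H_2^{\alpha/2-2}(\bR^d))}$ is not what Theorem \ref{theoremA} gives: applying that theorem at level $\gamma = \alpha/2-2$ only bounds the $\cH_{2,0}^{\alpha/2}(T)$-norm, and the inequality as you state it (with the correct $\cH_{2,0}^{\alpha/2+2}(T)$-norm on the left) would amount to gaining four derivatives from the heat equation, which is false. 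Second, the direction you single out as ``the main obstacle'' is an artifact of the same misreading: with the correct norm, $\|h\|_{\cH^u} = \|\varphi_1\|_{L_2((0,T),H_2^{\alpha/2}(\bR^d))}$ is literally one of the two summands of $\|h\|_{\cH_{2,0}^{\alpha/2+2}(T)}$, so that inequality is immediate from the definition, while the opposite inequality is precisely the estimate of Theorem \ref{theoremA} applied with $\gamma = \alpha/2$. Moreover, the fix you propose for your ``obstacle'' (estimating $\|\varphi_1(s,\cdot)\|_{H_2^{\alpha/2}}$ by $\|h_t(s,\cdot)\|_{H_2^{\alpha/2}} + \|\Delta h(s,\cdot)\|_{H_2^{\alpha/2}}$ and citing parabolic regularity at level $\alpha/2$) would be circular under your reading: the regularity estimate controlling $h_t$ in $L_2((0,T),H_2^{\alpha/2}(\bR^d))$ presupposes that the forcing term lies in $L_2((0,T),H_2^{\alpha/2}(\bR^d))$, which is exactly what you are trying to deduce. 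The correct and much shorter conclusion is the paper's: one inequality comes from the definition of the $\cH_{2,0}^{\alpha/2+2}(T)$-norm, the other from the estimate in Theorem \ref{theoremA}.
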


\begin{proof}
From the argument at the beginning of subsection 3.3, for every $h
\in \cH^u$, we have $h(t,x) = \bE M(\varphi)u(t,x)$, where $u$ is
defined in \eqref{solution-process} and $\varphi \in \cP_0^{(d)}$.
Then by Theorem \ref{h-unique-alpha+2}, $h \in
\cH_{2,0}^{\alpha/2+2}(T)$.

For $v \in \cH_{2,0}^{\alpha/2+2}(T)$, there is a $\varphi_1 \in
L_2((0,T), H_2^{\alpha/2}(\bR^d))$ satisfying \eqref{Krylov-eq} with
$\varphi_1$ in place of $g$. Then
$$
\varphi:= (1-\Delta)^{\alpha/2}\varphi_1 \in
L_2((0,T),H_2^{-\alpha/2}(\bR^d)) = \cP_0^{(d)}.
$$
Set $h(t,x) = \bE M(\varphi)u(t,x) \in \cH^u$, then by Theorem
\ref{h-unique-alpha+2} it follows that $v=h$, so $v \in \cH^u$.

Now notice that
$$
\| h \|_{\cH^u} = \| \varphi \|_{0} = \| \varphi \|_{L_2((0,T),
H_2^{-\alpha/2}(\bR^d))} = \| (1-\Delta)^{-\alpha/2} \varphi
\|_{L_2((0,T), H_2^{\alpha/2}(\bR^d))}
$$
$$
= \| \varphi_1 \|_{L_2((0,T), H_2^{\alpha/2}(\bR^d))} \simeq \| h
\|_{\cH_{2,0}^{\alpha/2+2}(T)},
$$
where $\simeq$ indicates the equivalence of the norms, which follows
from the definition of the norm of $\cH_{2,0}^{\alpha/2+2}(T)$ and
the estimate in Theorem \ref{theoremA}. This finishes the proof.
\end{proof}
%}

\begin{remark}\label{remark20070928}
{\rm Under the conditions of Theorem \ref{h-unique-alpha+2}, we can
also say that $h$ is the unique solution in $W_{2}^{1,2}((0,T)
\times \bR^{d})$ of:
$$h_{t} = \Delta h + \varphi_1 \ \ \  {\rm in} \ \ \ (0,T) \times
\bR^{d}, \quad h(0,\cdot) = 0.$$ Here $W_2^{1,2}((0,T) \times
\bR^d)$ is the space of all measurable functions $v:(0,T) \times
\bR^d \rightarrow \bR$, such that the weak derivatives $v_t$,
$v_{x_i}$, $v_{x_i x_j}$ exist and are in $L_2((0,T) \times
\bR^d)$.}
\end{remark}

We are now ready to prove the main result of this subsection.

\begin{theorem}
\label{main-theorem-Bessel} Suppose that $f$ is the Bessel kernel of
order $\alpha=2k,k \in \bN_{+}$ such that $\alpha>d-2$. Then the
process solution $u$ of the stochastic heat equation (\ref{heat-eq})
with vanishing initial conditions is locally germ Markov.
\end{theorem}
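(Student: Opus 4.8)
The plan is to verify conditions (i) and (ii) of Theorem~\ref{kunsch-theorem} for the Gaussian process $u$, using the identification of $\cH^u$ with the parabolic Sobolev space $\cH_{2,0}^{\alpha/2+2}(T)$ provided by Corollary~\ref{corollary20070928} (equivalently, with $W_2^{1,2}((0,T)\times\bR^d)$-type solutions of the heat equation, as in Remark~\ref{remark20070928}). The key observation is that, via this identification, an element $h\in\cH^u$ corresponds to the solution of $h_t=\Delta h+\varphi_1$, $h(0,\cdot)=0$, where $\varphi_1\in L_2((0,T),H_2^{\alpha/2}(\bR^d))$; and since $\alpha=2k$, the operator $(1-\Delta)^{\alpha/2}=(1-\Delta)^k$ is a \emph{local} (differential) operator of order $2k$. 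Therefore the ``source'' $\varphi=(1-\Delta)^k\varphi_1$ attached to $h$ is supported in $\overline{\operatorname{supp} h_1}$ where $h_1=(1-\Delta)^{-\alpha/2}h$; more to the point, I would show that $\operatorname{supp}\varphi$ (as a distribution in $\cP_0^{(d)}=L_2((0,T),H_2^{-\alpha/2}(\bR^d))$) is contained in $\overline{\operatorname{supp} h}$, because away from $\operatorname{supp} h$ the function $h$ vanishes, hence $h_t-\Delta h=\varphi_1$ vanishes there, hence $\varphi=(1-\Delta)^k\varphi_1$ vanishes there by locality.

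For condition (i): let $h,g\in\cH^u$ with $\operatorname{supp} h$ compact and $(\operatorname{supp} h)\cap(\operatorname{supp} g)=\emptyset$; write $h(t,x)=\bE(M(\varphi)u(t,x))$ and $g(t,x)=\bE(M(\eta)u(t,x))$ with $\varphi,\eta\in\cP_0^{(d)}$. By the argument above, $\operatorname{supp}\varphi\subseteq\overline{\operatorname{supp} h}$ and $\operatorname{supp}\eta\subseteq\overline{\operatorname{supp} g}$, so these supports are disjoint (and $\operatorname{supp}\varphi$ is compact). Then
$$
\langle h,g\rangle_{\cH^u}=\langle\varphi,\eta\rangle_0=\int_0^T\langle\varphi(t,\cdot),\eta(t,\cdot)\rangle_{0,x}\,dt,
$$
and since for each fixed $t$ the distributions $\varphi(t,\cdot)$ and $\eta(t,\cdot)$ have disjoint (spatial) supports, each inner product $\langle\varphi(t,\cdot),\eta(t,\cdot)\rangle_{0,x}$ vanishes: this is where I would use that the Bessel kernel $B_\alpha$ with $\alpha=2k$ gives $\langle\psi_1,\psi_2\rangle_{0,x}=\langle(1-\Delta)^{-k}\psi_1,(1-\Delta)^{-k}\psi_2\rangle_{L_2}$ and, more usefully, that $\langle\psi_1,\psi_2\rangle_{0,x}=\langle(1-\Delta)^{-k}\psi_1,\psi_2\rangle_{L_2(\bR^d)}$-type pairings together with the local nature of $(1-\Delta)^k$ force orthogonality of distributions with disjoint supports. (Concretely: $\langle\varphi(t,\cdot),\eta(t,\cdot)\rangle_{0,x}$ equals the pairing of the distribution $\eta(t,\cdot)$ with the function $(1-\Delta)^{-k}$ applied to $\varphi(t,\cdot)$ only through $B_\alpha*\varphi(t,\cdot)$; one instead writes $\varphi_1,\eta_1$ for the $(1-\Delta)^{-k}$-preimages and uses $\operatorname{supp}\varphi_1\subseteq\overline{\operatorname{supp} h}$, $\operatorname{supp}\eta_1\subseteq\overline{\operatorname{supp} g}$ plus $\langle\varphi,\eta\rangle_0=\langle\varphi_1,\eta_1\rangle_{\alpha}$ where the latter is a Sobolev pairing that is local enough.) Hence $\langle h,g\rangle_{\cH^u}=0$.

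For condition (ii): suppose $\zeta=h+g\in\cH^u$ with $\operatorname{supp} h$, $\operatorname{supp} g$ disjoint and $\operatorname{supp} h$ compact; we must show $h,g\in\cH^u$ individually. Write $\zeta(t,x)=\bE(M(\theta)u(t,x))$, $\theta\in\cP_0^{(d)}$, so $\zeta$ solves $\zeta_t=\Delta\zeta+\theta_1$, $\zeta(0,\cdot)=0$, with $\theta_1=(1-\Delta)^{-k}\theta$. Because $(1-\Delta)^k$ is local, the source $\theta=(1-\Delta)^k\theta_1$ splits as $\theta=\theta^{(1)}+\theta^{(2)}$ with $\operatorname{supp}\theta^{(1)}\subseteq\overline{\operatorname{supp} h}$ and $\operatorname{supp}\theta^{(2)}\subseteq\overline{\operatorname{supp} g}$ (the source is zero off $\operatorname{supp} h\cup\operatorname{supp} g$, and one can decompose it additively by multiplying $\zeta$ by a smooth cutoff equal to $1$ near $\operatorname{supp} h$ and $0$ near $\operatorname{supp} g$ — here the compactness of $\operatorname{supp} h$ and the positive distance between the two supports are used to produce such a cutoff). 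Define $h^*:=\bE(M(\theta^{(1)})u(\cdot))\in\cH^u$ and $g^*:=\bE(M(\theta^{(2)})u(\cdot))\in\cH^u$; by Theorem~\ref{h-unique-alpha+2}, $h^*$ solves the heat equation with source $(1-\Delta)^{-k}\theta^{(1)}$ and $g^*$ with source $(1-\Delta)^{-k}\theta^{(2)}$, and by uniqueness $h^*+g^*=\zeta=h+g$. Moreover $h-h^*$ solves a homogeneous heat equation (source $(1-\Delta)^{-k}(\theta^{(1)}+\theta^{(2)}-\theta)=0$) with zero initial data, hence $h-h^*=g^*-g$; but the left side is supported in a neighborhood of $\operatorname{supp} h\cup\operatorname{supp} h^*$ and the right side away from $\operatorname{supp} h$... the clean way is: $h^*$ solves $h^*_t=\Delta h^*$ off $\overline{\operatorname{supp} h}$ hence (parabolic unique continuation / the fact that a $\cH_{2,0}$-solution of the homogeneous equation vanishes) shows $\operatorname{supp} h^*\subseteq\overline{\operatorname{supp} h}$ and likewise $\operatorname{supp} g^*\subseteq\overline{\operatorname{supp} g}$, whence $h^*=h$ and $g^*=g$ by comparing with $h+g=h^*+g^*$ on the two disjoint regions. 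Therefore $h,g\in\cH^u$.

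\textbf{Main obstacle.} The crux — and the step I expect to require the most care — is the support-propagation claim: that the heat-equation solution $h$ attached to $\varphi\in\cP_0^{(d)}$ has $\operatorname{supp}\varphi$ (equivalently $\operatorname{supp}\varphi_1$) no larger than $\overline{\operatorname{supp} h}$ and, in condition (ii), that the component solutions $h^*,g^*$ have supports controlled by $\overline{\operatorname{supp} h},\overline{\operatorname{supp} g}$. This is exactly where the hypothesis $\alpha=2k$ (so that $(1-\Delta)^{\alpha/2}$ is a genuine differential operator, hence support-nonincreasing) is essential — for non-integer $\alpha$ the operator $(1-\Delta)^{\alpha/2}$ is nonlocal and the argument collapses — and it is the reason the theorem is restricted to even integer order. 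Making this rigorous requires being careful about what ``$\operatorname{supp}$'' means for an element of $\cP_0^{(d)}=L_2((0,T),H_2^{-\alpha/2}(\bR^d))$ (a space of distribution-valued functions) and verifying that the cutoff construction in (ii) produces genuine elements of $\cH^u$ with the asserted equations, using Theorem~\ref{h-unique-alpha+2} and the uniqueness in Theorem~\ref{theoremA} at each step.
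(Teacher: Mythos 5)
Your verification of condition (i) is, in substance, the paper's own argument: by Theorem \ref{h-unique-alpha+2} the source attached to $h$ is $\varphi_1=h_t-\Delta h$, so ${\rm supp}\,\varphi_1\subseteq{\rm supp}\,h$ by locality of the heat operator, and $\langle h,g\rangle_{\cH^u}=\langle\varphi,\eta\rangle_0=\langle\varphi_1,\eta_1\rangle_{L_2((0,T),H_2^{k}(\bR^d))}$ vanishes because for integer $k$ the $H_2^{k}$-pairing is local (a sum of $L_2$-pairings of derivatives). Note that your first formulation --- that $\langle\varphi(t,\cdot),\eta(t,\cdot)\rangle_{0,x}$ vanishes merely because $\varphi(t,\cdot)$ and $\eta(t,\cdot)$ have disjoint supports --- would be false as stated, since the Bessel kernel $B_\alpha$ is not compactly supported; you correct this yourself by passing to $\varphi_1,\eta_1$, which is exactly what the paper does.

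Condition (ii), however, contains a genuine gap. Your ``clean way'' rests on the claim that $h^*$, the $\cH_{2,0}^{k+2}(T)$-solution with source $(1-\Delta)^{-k}\theta^{(1)}$ supported in $\overline{{\rm supp}\,h}$, satisfies ${\rm supp}\,h^*\subseteq\overline{{\rm supp}\,h}$ by ``parabolic unique continuation''. This is false: the heat equation propagates support instantaneously, so a solution with a nonnegative, nonzero, compactly supported source, $h^*(t,x)=\int_0^t\int_{\bR^d}G(t-s,x-y)\theta_1^{(1)}(s,y)\,dy\,ds$, is strictly positive everywhere for $t$ beyond the time the source switches on. The uniqueness statement you invoke only says that a solution of the homogeneous equation on all of $(0,T)\times\bR^d$ with zero initial data vanishes; on the complement of ${\rm supp}\,h$ the function $h^*$ has no prescribed lateral behaviour, so nothing forces it to vanish there. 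The preceding step (``$h-h^*$ solves a homogeneous heat equation'') is also circular, since it presupposes that $h$ itself is an $\cH_{2,0}^{k+2}(T)$-solution of a heat equation, which is precisely what must be proved. The paper's proof sidesteps all of this: by Corollary \ref{corollary20070928}, $\cH^u=\cH_{2,0}^{k+2}(T)$ as a function space, and this space is stable under multiplication by a smooth cutoff $\chi$ ($\chi=1$ near ${\rm supp}\,h$, $\chi=0$ near ${\rm supp}\,g$), so $h=\chi\zeta\in\cH_{2,0}^{k+2}(T)=\cH^u$ at once. Your source-splitting route can be repaired, but only by this same observation: choosing $\chi$ whose derivatives are supported at positive distance from ${\rm supp}\,h\cup{\rm supp}\,g\supseteq{\rm supp}\,\zeta$, the commutator terms vanish, so $\chi\zeta$ solves the equation with source $\chi\theta_1=(1-\Delta)^{-k}\theta^{(1)}$, and uniqueness in Theorem \ref{theoremA} identifies $h=\chi\zeta$ with your $h^*\in\cH^u$; it is this identification, not any support propagation of solutions, that closes the argument.
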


%{\color{blue}
\begin{proof}
We need to verify conditions (i) and (ii) of Theorem
\ref{kunsch-theorem}.

We first verify condition (i). Let $h,g \in {\cH}^{u}$ are such that
$({\rm supp} \ h) \cap ({\rm supp} \ g)=\emptyset$ and ${\rm supp} \
h$ is compact. We have to prove that $\langle h,g
\rangle_{\cH^u}=0$. We know that there exist $\varphi, \eta \in
\cP_0^{(d)}$ such that $h(t,x) = \bE (M(\varphi) u(t,x))$ and
$g(t,x) = \bE (M(\eta) u(t,x))$. Then by Theorem
\ref{h-unique-alpha+2} and Corollary \ref{corollary20070928} (also
recall the definition of the norm of $\cH_{2,0}^{\gamma}(T)$),
$$
\langle h, g \rangle_{\cH^u} = \langle h, g
\rangle_{\cH_{2,0}^{k+2}(T)} = \langle h_{xx}, g_{xx}
\rangle_{L_2((0,T), H_2^k(\bR^d))} + \langle \varphi_1, \eta_1
\rangle_{L_2((0,T), H_2^k(\bR^d))},
$$
where $\varphi_1 = (1-\Delta)^k\varphi$ and $\eta_1 =
(1-\Delta)^k\eta$. Note that (see also Remark \ref{remark20070928})
$$
\text{supp}\, h_{xx} \subset \text{supp}\, h, \quad \text{supp}\,
\varphi_1 \subset \text{supp}\, h.
$$
Similar inclusions hold for $g$ and $\eta_1$. Thus it is clear that
$$
\langle h_{xx}, g_{xx} \rangle_{L_2((0,T), H_2^k(\bR^d))} = \langle
\varphi_1, \eta_1 \rangle_{L_2((0,T), H_2^k(\bR^d))} = 0,
$$
from which we arrive at $\langle h, g \rangle_{\cH^u} = 0$. This
finishes the proof of the condition (i).

To prove the condition (ii), let $\zeta=h+g \in {\cH}^{u}$, where
$h$ and $g$ are such that $({\rm supp} \ h) \cap ({\rm supp} \
g)=\emptyset$ and ${\rm supp} \ h$ is compact. We have to prove that
$h \in \cH^u$.

Let $\chi$ be an infinitely differentiable function such that
$\chi=1$ on ${\rm supp} \ h$ and $\chi=0$ on an open set containing
${\rm supp} \ g$. By Corollary \ref{corollary20070928}, it follows
that $\zeta \in \cH_{2,0}^{k+2}(T)$, and hence $h = \chi \zeta \in
\cH_{2,0}^{k+2}(T)= \cH^u$ . The theorem is proved.
\end{proof}

\subsection{The Case of the Riesz Kernel}

In this subsection, we will assume that $f$ is the Riesz kernel,
i.e. $f=R_{\alpha}$ with $\max\{d-2,0\}< \alpha = 4k<d$, $k \in
\bN_+$.

According to \cite{samko76}, for any $0<\beta<d/2$ we can define the
Riesz potential
$$
I^{\beta}\varphi(y):=\varphi
*R_{\beta}(y)=\frac{1}{\gamma_{n}(\beta)}
\int_{\bR^d}\frac{\varphi(y)}{|x-y|^{d-\beta}}dy \quad \mbox{for
all} \ \varphi \in L_{2}(\bR^{d}).$$ Let $q=q_{d,\beta}>2$ be such
that $1/q=1/2-\beta/d$.

The space $I^{\beta}(L_{2}(R^d))$ of all Riesz potentials has the
following properties (see \cite{samko76}):
\begin{enumerate}
\item $I^{\beta}(L_{2}(R^d)) \subset L_{q}(\bR^d)$ and there exists a
constant $N>0$ such that $$\|I^{\beta}\varphi \|_{L_{q}(\bR^d)} \leq
N \|\varphi \|_{L_{2}(\bR^d)}, \quad \mbox{for all} \ \varphi \in
L_{2}(\bR^d).$$

\item For every $f \in I^{\beta}(L_{2}(R^d))$, we define the Riesz
derivative $\bD^{\beta}f$ as \linebreak $\lim_{\varepsilon \to
0}\bD_{\varepsilon}^{\beta}f$ in $L_{2}(\bR^d)$, where
$$(\bD_{\varepsilon}^{\beta}f)(x)=
\frac{1}{c_{d,l}(\beta)}\int_{|y|>\varepsilon}
\frac{(\Delta_y^{l}f)(x)}{|y|^{d+\beta}}dy \quad (\mbox{is
independent of} \ l),$$ $\Delta_{y}^{l}f=(I-\tau_y)^{l}f$ and
$(\tau_y f)(x)=f(x-y)$. Then
$$
\cF (\bD^{\beta}f) (\xi)={\cF}f(\xi)|\xi|^{\beta}, \ \ \ \forall f
\in \cS(\bR^d).
$$

\item $I^{\beta}(L_{2}(R^d))=\{f \in L_{q}(\bR^d); \bD^{\beta}f \
\mbox{exists and is in} \ L_{2}(\bR^d)\}$.

\item $\bD^{\beta}$ is the left inverse of $I^{\beta}$ in
$L_{2}(\bR^d)$, i.e. $\bD^{\beta}(I^{\beta}\varphi)=\varphi$ for all
$\varphi \in
L_{2}(\bR^d)$. %and  $I^{\beta}(\bD^{\beta} f)=f$ for all $f \in
%K_{2}^{\beta}(\bR^d)$

\item $\cD(\bR^d)$ is dense in $I^{\beta}(L_{2}(R^d))$ with respect to
 the norm $\|f\|_{L_{q}(\bR^d)}+\linebreak \|\bD^{\beta}f
 \|_{L_{2}(\bR^d)}$.

%In particular,
%$\cD(\bR^d)$ is dense in $I^{\beta}(L_{2}(R^d))$ with respect to
%the norm $$\|f\|_{K_{2}^{\beta}(\bR^d)}:=\|\bD^{\beta}f \|_{L_{2}(\bR^d)}.$$

\end{enumerate}
%In this case, ${\cP}_{0,x}^{(d)}$ coincides with the space
%$$K^{-\alpha/2}_{2}(\bR^d):=\{\varphi \in \cS'(\bR^d); \cF \varphi \
%\mbox{is a function}, \ \|\varphi\|_{K_{2}^{\alpha/2}(\bR^d)}<
%\infty\},$$ where
%$$\|\varphi\|^{2}_{K_{2}^{-\alpha/2}(\bR^d)}:=\int_{\bR^d}
%|{\cF}\varphi(\xi)|^{2}|\xi|^{-\alpha}d\xi=\|\varphi\|^{2}_{0,x}.$$

%{\bf Remark:} Since $(1+|\xi|^{2})^{-\alpha/2}<|\xi|^{-\alpha}$,
%we have $K_{2}^{-\alpha/2}(\bR^d) \subset
%H_{2}^{-\alpha/2}(\bR^d)$.

%\vspace{3mm}

% For every $\varphi \in K^{-\alpha/2}_{2}(\bR^d)$, the
%function $\cF \varphi |\xi|^{-\alpha/2} \in L_{2}(\bR^d)$, and
%hence the function $\varphi_0:={\cF}^{-1}(\cF \varphi \
%|\xi|^{-\alpha/2})$ %=(-\Delta)^{-\alpha/4}\varphi$
%is well-defined in $L_{2}(\bR^d)$. Note that
%$$\varphi_0=(-\Delta)^{-\alpha/4}\varphi, \quad {\rm if} \quad \varphi \in
%\cS(\bR^d).$$

%\begin{lemma}
%\label{isom1} The map $J:K^{-\alpha/2}_{2}(\bR^d) \to
%L_{2}(\bR^d)$ defined by $J(\varphi)= \varphi_0$ is an isometry.
%\end{lemma}

%\begin{proof} For every $\varphi \in K_{2}^{\alpha/2}(\bR^d)$, we
%have:
%$$\|J(\varphi)\|_{L_{2}(\bR^d)}^{2}=\|\varphi_0\|_{L_{2}(\bR^d)}^{2}=\int_{\bR^d}|\cF
%\varphi_0(\xi)|^{2}d\xi=\int_{\bR^d}|\cF
%\varphi(\xi)|^{2}|\xi|^{-\alpha}d \xi=\|\varphi \|_{0}^{2}.$$
%\end{proof}

In what follows, we will use these properties with
$\beta=\alpha/2=2k$, where $d-2<4k<d$. We let $q=q_{d,4k}>2$ be such
that $1/q=1/2-2k/d$.

% \begin{lemma}
%\label{isom2} $I^{\alpha/2}$ is an isometry from $L_{2}(\bR^d)$ to
%$(K^{\alpha/2}_{2}(\bR^d), \| \cdot \|_{K_{2}^{\beta}(\bR^d)})$.
%\end{lemma}

%\begin{proof}
%For every $\varphi \in L_{2}(\bR^d)$,
%$$\|I^{\alpha/2}\varphi
%\|_{K_{2}^{\alpha/2}(\bR^d)}=\|\bD^{\alpha/2}(I^{\alpha/2}\varphi)
%\|_{L_{2}(\bR^d)}=\|\varphi \|_{L_{2}(\bR^d)}.$$
%\end{proof}

%From Lemma \ref{isom1} and Lemma \ref{isom2}, we have the
%following row of isometries:
%\begin{eqnarray*}
%\cP_{0,x}^{(d)}=K_2^{-\alpha/2} (\bR^d) \stackrel{J}{\rightarrow}
%& L_{2}(\bR^d) &
%\stackrel{I^{\alpha/2}}{\longrightarrow} K_2^{\alpha/2}(\bR^d) \\
% \varphi  \mapsto & \varphi_0 & \mapsto \varphi_1
%\end{eqnarray*}
%That is, for every $\varphi \in K_{2}^{-\alpha/2}(\bR^d)$
%$$\|\varphi\|_{K_{2}^{-\alpha/2}(\bR^d)}=\|\varphi_{0}\|_{L_{2}(\bR^d)}=
%\|\varphi_{1}\|_{K_{2}^{\alpha/2}(\bR^d)}$$

%By Lemma 1, p.117, \cite{stein70}
%$$
%I^{2k}\varphi=(-\Delta)^{-k}\varphi, \quad \forall \varphi \in
%\cS(\bR^d),$$ where $(-\Delta)^{-k}\varphi=\cF^{-1}(\cF
%\varphi(\xi)|\xi|^{-2k} )\in \cS'(\bR^d)$. Hence
%\begin{equation}
%\label{crucial-equation} \cF (I^{2k}\varphi)(\xi)=\cF
%\varphi(\xi)|\xi|^{-2k}, \ \ \forall \varphi \in \cS(\bR^d).
%\end{equation}

%In what follows, we need to extend (\ref{crucial-equation}) to all
%functions $\varphi \in L_{2}(\bR^d)$. In order to do this, we
%assume that $\alpha/2$ is an even integer, i.e. $\alpha=4k, k \in
%\bN_+$. This is a severe restriction; unfortunately, we could not
%avoid it.

\begin{proposition}
\label{Fourier-varphi1-Riesz} Let $2 < d/(2k)$ and $1/q = 1/2 -
2k/d$. If $f \in I^{2k}(L_2(\bR^d))$, where $k \in \bN_+$, then
$$
\cF \left( \bD^{2k} f \right) = |\xi|^{2k} \cF f,
$$
where $\cF f$ is an element in $\cS'(\bR^d)$.
\end{proposition}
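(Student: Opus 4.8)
The plan is to upgrade the Fourier identity of Property 2 above, which holds a priori only for $f \in \cS(\bR^d)$, to all of $I^{2k}(L_2(\bR^d))$ by a density argument. The key structural observation is that for $k \in \bN_+$ the function $\xi \mapsto |\xi|^{2k} = (\xi_1^2 + \dots + \xi_d^2)^k$ is a \emph{polynomial}, so multiplication by it is a well-defined continuous linear operation on $\cS'(\bR^d)$; this is precisely why the statement is restricted to even orders. First I would check that both sides of the claimed identity make sense as tempered distributions: the hypothesis $2 < d/(2k)$ means $1/q = 1/2 - 2k/d \in (0,1/2)$, i.e.\ $q \in (2,\infty)$, and $f \in I^{2k}(L_2(\bR^d)) \subset L_q(\bR^d) \subset \cS'(\bR^d)$ by Property 1, so $\cF f \in \cS'(\bR^d)$ and hence $|\xi|^{2k}\cF f \in \cS'(\bR^d)$; likewise $\bD^{2k}f \in L_2(\bR^d) \subset \cS'(\bR^d)$, so $\cF(\bD^{2k}f) \in \cS'(\bR^d)$ (indeed in $L_2(\bR^d)$ by Plancherel).

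Next I would invoke Property 5 (the density result from \cite{samko76}) to choose a sequence $f_n \in \cD(\bR^d)$ with $\|f_n - f\|_{L_q(\bR^d)} \to 0$ and $\|\bD^{2k}f_n - \bD^{2k}f\|_{L_2(\bR^d)} \to 0$. Since $\cD(\bR^d) \subset \cS(\bR^d)$, Property 2 applies to each $f_n$ and gives
$$\cF(\bD^{2k}f_n) = |\xi|^{2k}\,\cF f_n \quad \text{in } \cS'(\bR^d), \qquad \forall n \geq 1.$$
It then remains to pass to the limit in $\cS'(\bR^d)$ on both sides.

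On the left-hand side, $\bD^{2k}f_n \to \bD^{2k}f$ in $L_2(\bR^d)$, so by Plancherel $\cF(\bD^{2k}f_n) \to \cF(\bD^{2k}f)$ in $L_2(\bR^d)$ and a fortiori in $\cS'(\bR^d)$. On the right-hand side, $f_n \to f$ in $L_q(\bR^d)$ implies $f_n \to f$ in $\cS'(\bR^d)$ (pairing against $\psi \in \cS(\bR^d)$ and using H\"older's inequality, since $\|\psi\|_{L_{q'}(\bR^d)}$ is dominated by a Schwartz seminorm of $\psi$), hence $\cF f_n \to \cF f$ in $\cS'(\bR^d)$; and since multiplication by the fixed polynomial $|\xi|^{2k}$ maps $\cS(\bR^d)$ into itself, it is continuous on $\cS'(\bR^d)$, so $|\xi|^{2k}\cF f_n \to |\xi|^{2k}\cF f$ in $\cS'(\bR^d)$. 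Equating the two limits yields $\cF(\bD^{2k}f) = |\xi|^{2k}\cF f$ in $\cS'(\bR^d)$, as claimed.

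I expect this proof to be short: the only substantive ingredient is the density statement in Property 5, which has already been cited, and the rest is routine functional analysis on $\cS'(\bR^d)$. The one place to be a little careful is the interpretation of the right-hand side — for non-integer powers $|\xi|^{2k}$ would fail to be a polynomial and its product with a generic tempered distribution would not be canonically defined; the hypothesis $k \in \bN_+$ (equivalently $\alpha = 4k$ for the Riesz kernel) is exactly what removes this difficulty, and is the reason for working with even orders throughout this subsection.
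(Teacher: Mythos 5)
Your proof is correct and follows essentially the same route as the paper: both arguments approximate $f$ by $f_n \in \cD(\bR^d)$ using the density of $\cD(\bR^d)$ in $I^{2k}(L_2(\bR^d))$, apply the Schwartz-level identity $\cF(\bD^{2k}f_n)=|\xi|^{2k}\cF f_n$, and pass to the limit in $\cS'(\bR^d)$ using $f_n \to f$ in $L_q$ and $\bD^{2k}f_n \to \bD^{2k}f$ in $L_2$, with the polynomial nature of $|\xi|^{2k}$ justifying the product with a tempered distribution. The paper carries out the limit by pairing with a test function and moving the multiplication and Fourier transform onto it, which is just a rephrasing of your continuity argument.
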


\begin{proof}
First of all, $|\xi|^{2k} \cF f$ is well-defined as an element of
$\cS'$ since $|\xi|^{2k} = ( \xi_1^2 + \cdots + \xi_d^2 )^{k}$ is
infinitely differentiable.

Due to the fact that $\cD(\bR^d)$ is dense in $I^{2k}(L_2(\bR^d))$,
there exists a sequence $\{ f_n \} \subset \cD(\bR^d)$ such that
$$
\| f_n - f \|_{L_q(\bR^d)} + \| \bD^{2k} f_n - \bD^{2k} f
\|_{L_2(\bR^d)} \to 0.
$$
Note that $\cF \left( \bD^{2k} f_n \right) = |\xi|^{2k} \cF f_n \in
\cS(\bR^d)$. Also note that
$$
\left( \cF \left( \bD^{2k} f_n \right), \phi \right) = \left(
|\xi|^{2k} \cF f_n, \phi \right) = \left( \cF f_n, |\xi|^{2k} \phi
\right)
$$
$$
= \left( f_n, \cF^{-1} \left(|\xi|^{2k} \phi \right) \right) \to
\left( f, \cF^{-1} \left(|\xi|^{2k} \phi \right) \right) = \left(
|\xi|^{2k} \cF f, \phi \right),
$$
where the convergence is possible because $f_n \to f$ in
$L_q(\bR^d)$. This along with the fact that $\bD^{2k} f_n \to
\bD^{2k} f$ in $L_2(\bR^d)$ implies that $$ \cF \left( \bD^{2k} f
\right) = |\xi|^{2k} \cF f.
$$
\end{proof}

\begin{corollary} \label{claim_2006_12_19}
Let $2 < d/(2k)$ and $1/q = 1/2 - 2k/d$. If $f \in
I^{2k}(L_2(\bR^d))$, where $k \in \bN_+$, then
$$
\bD^{2k} f = (-\Delta)^k f.
$$
\end{corollary}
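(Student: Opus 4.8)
The plan is to deduce this directly from Proposition~\ref{Fourier-varphi1-Riesz} by comparing Fourier transforms, since both $\bD^{2k}f$ and $(-\Delta)^k f$ are, a priori, tempered distributions whose Fourier transforms are given by multiplication by $|\xi|^{2k}$. First I would recall that $|\xi|^{2k} = (\xi_1^2 + \cdots + \xi_d^2)^k$, so for $\phi \in \cS(\bR^d)$ the distribution $(-\Delta)^k f \in \cS'(\bR^d)$ is defined (via duality on the Schwartz space) by $\cF[(-\Delta)^k f](\xi) = |\xi|^{2k}\,\cF f(\xi)$; this is exactly the standard identity $\cF[(-\Delta)\phi] = |\xi|^2 \cF\phi$ iterated $k$ times, extended to $\cS'$ by transposition. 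On the other hand, Proposition~\ref{Fourier-varphi1-Riesz} asserts $\cF(\bD^{2k} f) = |\xi|^{2k}\,\cF f$ as elements of $\cS'(\bR^d)$.

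Having both sides with the same Fourier transform, I would then invoke the injectivity of the Fourier transform on $\cS'(\bR^d)$ (the map $\cF:\cS'(\bR^d)\to\cS'(\bR^d)$ is a bijection) to conclude $\bD^{2k}f = (-\Delta)^k f$ as tempered distributions. The only point requiring a word of care is that the equality should first be understood in $\cS'(\bR^d)$; but since $\bD^{2k}f \in L_2(\bR^d) \subset \cS'(\bR^d)$ by property~(3) of the space $I^{2k}(L_2(\bR^d))$, and $(-\Delta)^k f$ is then automatically in $L_2(\bR^d)$ as well (its Fourier transform $|\xi|^{2k}\cF f = \cF(\bD^{2k}f)$ is square-integrable), the identity in fact holds as an equality of $L_2$-functions.

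The argument is short and essentially a restatement: the substantive content has already been established in Proposition~\ref{Fourier-varphi1-Riesz}, where the delicate interchange of limits (using density of $\cD(\bR^d)$ in $I^{2k}(L_2(\bR^d))$ together with the two different modes of convergence, $L_q$ for $f_n$ and $L_2$ for $\bD^{2k}f_n$) was carried out. Here there is no real obstacle; the only thing to verify is the elementary fact that $(-\Delta)^k$ acts on $\cS'$ as Fourier multiplication by $|\xi|^{2k}$, which follows from the definition of the Laplacian on distributions and the formula $\cF[\partial_{x_j}T] = \mathrm{i}\xi_j \cF T$ applied $2k$ times. Concretely, I would write: for any $\phi \in \cS(\bR^d)$,
$$
\bigl(\cF[(-\Delta)^k f],\phi\bigr) = \bigl((-\Delta)^k f, \cF\phi\bigr) = \bigl(f,(-\Delta)^k \cF\phi\bigr) = \bigl(f,\cF(|\xi|^{2k}\phi)\bigr) = \bigl(\cF f, |\xi|^{2k}\phi\bigr) = \bigl(|\xi|^{2k}\cF f,\phi\bigr),
$$
so $\cF[(-\Delta)^k f] = |\xi|^{2k}\cF f = \cF(\bD^{2k}f)$ by Proposition~\ref{Fourier-varphi1-Riesz}, and injectivity of $\cF$ on $\cS'(\bR^d)$ gives $(-\Delta)^k f = \bD^{2k} f$.
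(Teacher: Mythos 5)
Your argument is correct and follows essentially the same route as the paper: you invoke Proposition \ref{Fourier-varphi1-Riesz} for $\cF(\bD^{2k}f)=|\xi|^{2k}\cF f$ and verify by the same duality computation that $\cF\bigl((-\Delta)^k f\bigr)=|\xi|^{2k}\cF f$, then conclude by injectivity of $\cF$ on $\cS'(\bR^d)$ (a step the paper leaves implicit). Your added observation that the identity then holds in $L_2(\bR^d)$ is a harmless refinement, not a deviation.
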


\begin{proof} Note that
$$
\left( \cF \left( \bD^{2k} f \right), \phi \right) = \left(
|\xi|^{2k} \cF f, \phi \right) = \left( \cF f, |\xi|^{2k} \phi
\right).
$$
On the other hand,
$$
\left( \cF \left( (-\Delta)^{k} f \right), \phi \right) = \left(
(-\Delta)^{k} f, \cF^{-1}\phi \right) = \left( f, (-\Delta)^{k}
\cF^{-1}\phi \right)
$$
$$
= \left( f, \cF^{-1} \left( |\xi|^{2k} \phi \right) \right) = \left(
\cF f, |\xi|^{2k} \phi \right).
$$

\end{proof}

\begin{lemma}\label{lemma20070927_01}
Let $2 < d/2k$. There exists a linear operator $J:\cP_0^{(d)} \to
L_2((0,T) \times \bR^d)$ such that
 $J$ is one-to-one and onto, and satisfies:
$$
J \varphi (t,x) = I^{2k} \left(\varphi(t,\cdot) \right) (x)
$$
for any $\varphi \in \cD((0,T) \times \bR^d)$.
\end{lemma}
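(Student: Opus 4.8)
The plan is to define $J$ on the dense subspace $\cD((0,T)\times\bR^d)\subset\cP_0^{(d)}$ by the prescribed formula $J\varphi(t,x)=I^{2k}(\varphi(t,\cdot))(x)$, show it is an isometry for the relevant norms, and then extend by continuity; the surjectivity and injectivity will come from the corresponding properties of the one-dimensional (spatial) Riesz potential operator recorded in properties (1)--(5) above, applied fibrewise in $t$. First I would observe that for $\varphi\in\cD((0,T)\times\bR^d)$ and each fixed $t$, the function $\varphi(t,\cdot)$ lies in $\cD(\bR^d)\subset L_2(\bR^d)$, so $I^{2k}(\varphi(t,\cdot))\in L_q(\bR^d)$ by property (1), and by Proposition \ref{Fourier-varphi1-Riesz} (or directly on Schwartz functions) its spatial Fourier transform is $|\xi|^{-2k}\cF\varphi(t,\xi)$. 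Hence, using the Riesz kernel formula $\mu(d\xi)=|\xi|^{-\alpha}d\xi=|\xi|^{-4k}d\xi$,
\[
\|J\varphi\|_{L_2((0,T)\times\bR^d)}^2=\int_0^T\int_{\bR^d}\big||\xi|^{-2k}\cF\varphi(t,\xi)\big|^2\,d\xi\,dt=\int_0^T\int_{\bR^d}|\cF\varphi(t,\xi)|^2\,\mu(d\xi)\,dt=\|\varphi\|_0^2,
\]
so $J$ is a linear isometry from $(\cD((0,T)\times\bR^d),\|\cdot\|_0)$ into $L_2((0,T)\times\bR^d)$. Since $\cD((0,T)\times\bR^d)$ is dense in $\cP_0^{(d)}$ with respect to $\|\cdot\|_0$ (as established at the end of Section 2.3), $J$ extends uniquely to an isometry $J:\cP_0^{(d)}\to L_2((0,T)\times\bR^d)$, automatically one-to-one.

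For surjectivity, I would show the range of $J$ is closed (immediate, being the isometric image of the complete space $\cP_0^{(d)}$) and dense. Denseness reduces to showing that every $w$ in a dense subclass of $L_2((0,T)\times\bR^d)$ is approximable by elements $J\varphi$. Given $w\in\cD((0,T)\times\bR^d)$, set $\varphi(t,\cdot):=\bD^{2k}(w(t,\cdot))$; by property (3), $w(t,\cdot)\in I^{2k}(L_2(\bR^d))$ with $\bD^{2k}w(t,\cdot)\in L_2(\bR^d)$, and property (4) gives $I^{2k}(\bD^{2k}w(t,\cdot))=w(t,\cdot)$. One then checks $\varphi\in\cP_0^{(d)}$: its spatial Fourier transform is $|\xi|^{2k}\cF w(t,\xi)$ by Proposition \ref{Fourier-varphi1-Riesz}, so $\int_0^T\int_{\bR^d}|\cF\varphi(t,\xi)|^2\mu(d\xi)\,dt=\int_0^T\int_{\bR^d}||\xi|^{2k}\cF w|^2|\xi|^{-4k}\,d\xi\,dt=\|w\|_{L_2((0,T)\times\bR^d)}^2<\infty$, placing $\varphi$ in $\overline{\cP}^{(d)}$; approximating $\varphi$ by elements of $\cE_0^{(d)}$ puts it in $\cP_0^{(d)}$. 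Finally $J\varphi=w$ on this dense class, so $J$ is onto.

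The main obstacle I anticipate is the measurability and fibrewise-integrability bookkeeping needed to make the ``apply properties (1)--(5) for each fixed $t$'' argument rigorous: one must ensure that for $\varphi\in\cP_0^{(d)}$ the map $t\mapsto\varphi(t,\cdot)$ is strongly measurable into the appropriate Riesz-potential space, that $I^{2k}$ acts measurably in $t$, and that the density argument (approximating by $\cD((0,T)\times\bR^d)$, then by $\cE_0^{(d)}$) is compatible with these fibrewise operations. Remark \ref{P0d-in-L2} and the identification $\cP_0^{(d)}\subset L_2((0,T),\cP_{0,x}^{(d)})$ handle the $t$-measurability on the domain side; on the target side one uses that $I^{2k}:L_2(\bR^d)\to L_q(\bR^d)$ is bounded (property (1)) together with the characterization in property (3) to transfer measurability. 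Once these points are in place, the isometry identity and the inversion formula $\bD^{2k}\circ I^{2k}=\mathrm{id}$ close the argument, and the explicit formula $J\varphi=I^{2k}\varphi$ on $\cD((0,T)\times\bR^d)$ holds by definition.
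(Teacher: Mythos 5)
Your proposal is correct and follows essentially the same route as the paper: define $J$ on $\cD((0,T)\times\bR^d)$ fibrewise via $I^{2k}$, verify the isometry $\|J\varphi\|_{L_2((0,T)\times\bR^d)}=\|\varphi\|_0$ through the Fourier identity $\cF(I^{2k}\varphi(t,\cdot))=|\xi|^{-2k}\cF\varphi(t,\cdot)$, extend by density, and obtain surjectivity by applying $(-\Delta)^k=\bD^{2k}$ to a dense class of smooth compactly supported functions and passing to the limit. Your closed-range-plus-dense-image phrasing of surjectivity is just a repackaging of the paper's explicit Cauchy-sequence argument (and note that your $\varphi=\bD^{2k}w=(-\Delta)^k w$ already lies in $\cD((0,T)\times\bR^d)\subset\cP_0^{(d)}$, so the final approximation step you mention is immediate).
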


\begin{proof}
For all $\varphi \in \cD((0,T) \times \bR^d)$, set
$$
J(\varphi) = I^{2k} \left( \varphi(t,\cdot) \right)(x) = c_{d,k}
\int_{\bR^d} \frac{\varphi(t,y)}{|x-y|^{d-2k}} \, dy,
$$
where $c_{d,k}$ is an appropriate constant. Denote $\varphi_0 :=
J(\varphi)$. We see that $\varphi_0(t,x)$ is measurable in $(t,x)
\in (0,T) \times \bR^d$. Since $\varphi(t,\cdot) \in \cS(\bR^d)$ for
each fixed $t \in (0,T)$, we have
$$
\cF \varphi_0(t,\cdot) (\xi) = \cF \varphi(t,\xi) |\xi|^{-2k}
$$
in the sense that
$$
\int_{\bR^d} \varphi_0(t,x) \overline{\phi(x)} \, dx = \int_{\bR^d}
\cF \varphi(t,\xi) |\xi|^{-2k} \overline{\cF \phi(\xi)} \, d \xi,
$$
where $\phi \in \cS(\bR^d)$. Also notice that, for a.e. $t \in
(0,T)$,
$$
\cF \varphi(t,\xi) |\xi|^{-2k} \in L_2(\bR^d)
$$
because $\varphi \in \cP_0^{(d)}$ (recall that $\alpha = 4k$), i.e.,
$$
\int_0^T\int_{\bR^d} |\cF \varphi(t,\xi) |\xi|^{-2k}|^2 \, d \xi \,
dt < \infty.
$$
Thus, for a.e. $t \in (0,T)$, as a function of $x \in \bR^d$,
$\varphi_0(t,x) \in L_2(\bR^d)$ and
$$
\cF \varphi_0 (t,\cdot)(\xi) = \cF \varphi(t,\xi) |\xi|^{-2k}.
$$
Thus
$$
\|J(\varphi)\|_{L_2((0,T) \times \bR^d)}^2 = \int_0^T \int_{\bR^d} |
\varphi_0(t,x) |^2 \, dx \, dt
$$
$$
= \int_0^T \int_{\bR^d} | \cF \varphi(t,\xi)|^2 |\xi|^{-4k} \, d \xi
\, dt = \| \varphi \|_0^2.
$$
From this and the fact that $\cD((0,T) \times \bR^d)$ is dense in
$\cP_0^{(d)}$, we extend $J$ to all elements in $\cP_0^{(d)}$.

It is easy to see that $J$ is linear as well as one-to-one. To prove
the fact that $J$ is onto, take $\varphi_0 \in L_2((0,T) \times
\bR^d)$. Also take a sequence $\{{\varphi_0}_n\} \subset \cD((0,T)
\times \bR^d)$ such that ${\varphi_0}_n \to \varphi_0$ in $L_2((0,T)
\times \bR^d)$. Let $\varphi_n = (-\Delta)^{k} {\varphi_0}_n$. Then
$$
\cF \varphi_n (t, \xi) = |\xi|^{2k} \cF {\varphi_0}_n (t, \xi)
$$
and
$$
\int_0^T \int_{\bR^d} |\cF \varphi_n(t,\xi)|^2 |\xi|^{-4k} \, d \xi
\, dt = \int_0^T \int_{\bR^d} |\cF {\varphi_0}_n |^2 \, d \xi \, dt.
$$
Thus $\{\varphi_n\}$ is a Cauchy sequence in $\cP_0^{(d)}$, so there
is a $\varphi \in \cP_0^{(d)}$ such that $\varphi_n \to \varphi$ in
$\cP_0^{(d)}$. To prove $J(\varphi) = \varphi_0$, we only need to
prove that
$$
I^{2k}(\varphi_n) = {\varphi_0}_n,
$$
which follows from
$$
\cF \left( I^{2k} (\varphi_n) \right) = |\xi|^{-2k} \cF \varphi_n.
$$
\end{proof}

From the above result it follows that:

\begin{corollary}\label{corollary20070927_01}
For $\varphi$, $\eta \in \cP_0^{(d)}$, let $\varphi_0 := J(\varphi)$
and $\eta_0 : = J(\eta)$. Then
$$
\langle \varphi, \eta \rangle_0 = \langle \varphi_0, \eta_0
\rangle_{L_2((0,T) \times \bR^d)}.
$$
\end{corollary}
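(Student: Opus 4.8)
The plan is to leverage the isometry property of the operator $J$ constructed in Lemma \ref{lemma20070927_01}, which was shown there to satisfy $\|J(\varphi)\|_{L_2((0,T)\times\bR^d)}=\|\varphi\|_0$ for every $\varphi\in\cD((0,T)\times\bR^d)$, and hence, by density and continuity of the extension, for every $\varphi\in\cP_0^{(d)}$. The corollary is then just the polarization identity for this isometry. First I would recall that $J:\cP_0^{(d)}\to L_2((0,T)\times\bR^d)$ is linear, and that the computation in the proof of Lemma \ref{lemma20070927_01} gives $\|J(\varphi)\|_{L_2((0,T)\times\bR^d)}^2=\|\varphi\|_0^2$ for all $\varphi\in\cP_0^{(d)}$ (the identity holds for test functions by the explicit Fourier-side computation $\cF\varphi_0(t,\cdot)(\xi)=\cF\varphi(t,\xi)|\xi|^{-2k}$, and extends by density since both sides are continuous in $\|\cdot\|_0$).

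Next I would invoke polarization. Since $\langle\cdot,\cdot\rangle_0$ is a genuine inner product on $\cP_0^{(d)}$ (by the alternative characterization in subsection 2.4, $\cP_0^{(d)}$ is a Hilbert space with this inner product) and $\langle\cdot,\cdot\rangle_{L_2((0,T)\times\bR^d)}$ is the $L_2$ inner product, and $J$ is a linear map preserving the corresponding norms, the polarization identity
$$
\langle\varphi,\eta\rangle_0=\frac14\sum_{j=0}^{3} i^{\,j}\,\|\varphi+i^{\,j}\eta\|_0^2
$$
together with the analogous identity in $L_2$ and the norm equality $\|\psi\|_0=\|J\psi\|_{L_2}$ applied to $\psi=\varphi+i^{\,j}\eta$ yields
$$
\langle\varphi,\eta\rangle_0=\frac14\sum_{j=0}^{3} i^{\,j}\,\|J(\varphi+i^{\,j}\eta)\|_{L_2((0,T)\times\bR^d)}^2=\frac14\sum_{j=0}^{3} i^{\,j}\,\|\varphi_0+i^{\,j}\eta_0\|_{L_2((0,T)\times\bR^d)}^2=\langle\varphi_0,\eta_0\rangle_{L_2((0,T)\times\bR^d)},
$$
using linearity of $J$ in the middle step. (In the real case one uses the two-term identity $\langle\varphi,\eta\rangle=\frac14(\|\varphi+\eta\|^2-\|\varphi-\eta\|^2)$ instead.) This is essentially the whole argument.

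I do not expect any serious obstacle here; the only point requiring a line of justification is that the norm identity from Lemma \ref{lemma20070927_01} genuinely holds on all of $\cP_0^{(d)}$ and not merely on $\cD((0,T)\times\bR^d)$ — but this is immediate because $J$ was \emph{defined} on $\cP_0^{(d)}$ precisely by continuous extension of its values on the dense subspace $\cD((0,T)\times\bR^d)$, so the isometry passes to the closure. Alternatively, one could bypass polarization entirely and argue directly: for $\varphi,\eta\in\cD((0,T)\times\bR^d)$ one has, by the displayed Fourier identities in the proof of Lemma \ref{lemma20070927_01} and Plancherel in the $x$-variable for each fixed $t$,
$$
\langle\varphi_0,\eta_0\rangle_{L_2((0,T)\times\bR^d)}=\int_0^T\int_{\bR^d}\cF\varphi(t,\xi)\,\overline{\cF\eta(t,\xi)}\,|\xi|^{-4k}\,d\xi\,dt=\langle\varphi,\eta\rangle_0,
$$
and then pass to general $\varphi,\eta\in\cP_0^{(d)}$ by approximating in $\|\cdot\|_0$ and using continuity of both bilinear forms (the left-hand side being continuous because $J$ is bounded). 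Either route is short; I would present the polarization argument for brevity.
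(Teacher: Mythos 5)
Your proposal is correct and matches the paper's intent: the paper states the corollary as an immediate consequence of Lemma \ref{lemma20070927_01}, where $J$ is constructed as a linear isometry from $\cP_0^{(d)}$ into $L_2((0,T)\times\bR^d)$ by continuous extension from $\cD((0,T)\times\bR^d)$, and your polarization (or equivalently the direct Fourier/Plancherel plus density) argument is exactly the standard way this implication is filled in. No gap; this is essentially the same approach.
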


Now we set
$$
\varphi_1(t,x) = I^{2k} \left( \varphi_0(t,\cdot) \right) (x) =
c_{d,k} \int_{\bR^d} \frac{\varphi_0(t,y)}{|x-y|^{d-2k}} \, dy,
$$
where $\varphi_0 \in L_2((0,T)\times\bR^d)$. Notice that $\varphi_1$
is a measurable function of $(t,x) \in (0,T) \times \bR^d$. Since
$\varphi_0(t,x) \in L_2(\bR^d)$ for a.e. $t \in (0,T)$,
$$
\varphi_1(t,\cdot) \in L_q(\bR^d), \quad \quad \| \varphi_1(t,\cdot)
\|_{L_q(\bR^d)} \le N \| \varphi_0(t,\cdot) \|_{L_2(\bR^d)},
$$
for a.e. $t \in (0,T)$, where $1/q = 1/2 - 2k/d$. This implies that
$$
\varphi_1 \in L_{2,q}((0,T) \times \bR^d),
$$
$$
\| \varphi_1 \|_{L_{2,q}((0,T) \times \bR^d)} = \left(\int_0^T \|
\varphi_1(t,\cdot) \|_{L_q(\bR^d)}^2 \, dt \right)^{1/2} \le N \|
\varphi_0 \|_{L_2((0,T) \times \bR^d)}.
$$

From the $L_p$-theory of parabolic equations with mixed norms, there
exists a unique function $w \in W_{2,q}^{1,2}((0,T) \times \bR^d)$
satisfying
\begin{equation}                            \label{eq01}
w_t - \Delta w = \varphi_1, \qquad w(0,\cdot) = 0.
\end{equation}

\begin{lemma}
\label{lemma-w-equal-h} Let $h(t,x) = \bE M(\varphi) u(t,x)$, where
$u$ is defined in \eqref{solution-process}. Let $\varphi_0 =
J(\varphi)$, $\varphi_1$ be a function defined as above, and $w$ be
the solution to \eqref{eq01}. Then
$$
w = h.
$$
\end{lemma}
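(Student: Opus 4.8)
The plan is to prove $w=h$ by showing the two functions pair identically against every test function, exactly as in the proof of Theorem \ref{h-unique-alpha+2}: it suffices to check that $\langle w,\eta\rangle_{L_2((0,T)\times\bR^d)}=\langle h,\eta\rangle_{L_2((0,T)\times\bR^d)}$ for all $\eta\in\cD((0,T)\times\bR^d)$, after which $w=h$ as elements of $L_2((0,T)\times\bR^d)$. Fix such an $\eta$ and let $\phi$ be the solution of $-\phi_t-\Delta\phi=\eta$ in $(0,T)\times\bR^d$, $\phi(T,\cdot)=0$. As in Lemma \ref{key-calculation}, $\phi(r,y)=\int_r^T\int_{\bR^d}G(s-r,x-y)\eta(s,x)\,dx\,ds=(\eta*\tilde G)(r,y)$, so $\phi\in\cP_0^{(d)}$ by Lemma \ref{psi*Gtilde}(b); moreover $\phi(r,\cdot)\in\cS(\bR^d)$ for every $r$ (a Gaussian convolved with a compactly supported smooth function, integrated over a bounded range of times), with Schwartz seminorms locally bounded in $r$.

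\emph{The side of $h$.} By Lemma \ref{key-calculation}, $\langle h,\eta\rangle_{L_2((0,T)\times\bR^d)}=\langle\varphi,\phi\rangle_0$. Since $\varphi,\phi\in\cP_0^{(d)}$, Corollary \ref{corollary20070927_01} gives $\langle\varphi,\phi\rangle_0=\langle\varphi_0,J\phi\rangle_{L_2((0,T)\times\bR^d)}$, where $\varphi_0=J(\varphi)$. Next I would verify that $(J\phi)(t,\cdot)=I^{2k}(\phi(t,\cdot))$ for a.e. $t$: because $\phi(t,\cdot)\in\cS(\bR^d)$ and $4k<d$, the function $|\xi|^{-2k}\cF\phi(t,\xi)$ lies in $L_2(\bR^d)$, hence $I^{2k}(\phi(t,\cdot))\in L_2(\bR^d)$ with Fourier transform $|\xi|^{-2k}\cF\phi(t,\cdot)$, which coincides with $\cF((J\phi)(t,\cdot))$ by the construction of $J$. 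Then the symmetry of the Riesz kernel (Fubini, valid since $\varphi_0(t,\cdot)\in L_2$, $\varphi_1(t,\cdot)\in L_q$, $\phi(t,\cdot)\in\cS\subset L_{q'}$ with $1/q=1/2-2k/d$) yields $\langle\varphi_0(t,\cdot),I^{2k}\phi(t,\cdot)\rangle_{L_2(\bR^d)}=\langle I^{2k}\varphi_0(t,\cdot),\phi(t,\cdot)\rangle_{L_2(\bR^d)}=\langle\varphi_1(t,\cdot),\phi(t,\cdot)\rangle_{L_2(\bR^d)}$. Integrating in $t$ gives $\langle h,\eta\rangle_{L_2((0,T)\times\bR^d)}=\langle\varphi_1,\phi\rangle_{L_2((0,T)\times\bR^d)}$.

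\emph{The side of $w$.} Here I would use that $w\in W^{1,2}_{2,q}((0,T)\times\bR^d)$ solves \eqref{eq01} with $w(0,\cdot)=0$. Testing the equation against $\phi$, integrating by parts once in $t$ (both boundary terms vanish, since $w(0,\cdot)=0$ and $\phi(T,\cdot)=0$) and twice in $x$ (the spatial boundary terms vanishing by the rapid decay of $\phi$, after a standard cutoff-and-limit argument using $w,w_t,D^2w\in L_{2,q}$), we obtain $\langle\varphi_1,\phi\rangle_{L_2((0,T)\times\bR^d)}=\langle w_t-\Delta w,\phi\rangle_{L_2}=\langle w,-\phi_t-\Delta\phi\rangle_{L_2}=\langle w,\eta\rangle_{L_2((0,T)\times\bR^d)}$. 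Combining with the previous step, $\langle w,\eta\rangle_{L_2((0,T)\times\bR^d)}=\langle h,\eta\rangle_{L_2((0,T)\times\bR^d)}$ for every $\eta\in\cD((0,T)\times\bR^d)$, hence $w=h$.

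I expect the main obstacle to be the \emph{one factor at a time} passage $\langle\varphi_0,J\phi\rangle=\langle\varphi_0,I^{2k}\phi\rangle=\langle I^{2k}\varphi_0,\phi\rangle=\langle\varphi_1,\phi\rangle$: none of $\varphi_0(t,\cdot)$, $\varphi_1(t,\cdot)$, $(J\phi)(t,\cdot)$ need lie in $L_2(\bR^d)$ simultaneously, so the Plancherel identities and the self-adjointness of the Riesz potential must be applied with care about which factor sits in which Lebesgue class — and it is precisely the exponents $2$, $q$, $q'$ (with $1/q=1/2-2k/d$) that keep every pairing finite. The parabolic integration by parts against a test function that is only Schwartz-in-$x$ (not compactly supported) is the second, more routine technical point, for which I would quote the $L_p$-theory of parabolic equations with mixed norms already used to produce $w$.
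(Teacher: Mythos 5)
Your proposal is correct, and its skeleton is the paper's: test against $\eta\in\cD((0,T)\times\bR^d)$, introduce the backward solution $\phi$ of $-\phi_t-\Delta\phi=\eta$, $\phi(T,\cdot)=0$, use Lemma \ref{key-calculation} to write $\langle h,\eta\rangle_{L_2}=\langle\varphi,\phi\rangle_0$, and test the equation for $w$ against $\phi$ to get $\langle w,\eta\rangle_{L_2}=\langle\varphi_1,\phi\rangle_{L_2}$. Where you diverge is in how the central identity $\langle\varphi,\phi\rangle_0=\langle\varphi_1,\phi\rangle_{L_2}$ is established. The paper does it by approximation: it picks ${\varphi_0}_n\in\cD((0,T)\times\bR^d)$ with ${\varphi_0}_n\to\varphi_0$ in $L_2$, forms $\psi_n=I^{2k}({\varphi_0}_n)$ and the auxiliary solutions $w_n$ of $(\partial_t-\Delta)w_n=\psi_n$, and pushes both pairings to the Fourier side, where $\lim_n\int\cF\varphi_n\,\overline{\cF\phi}\,|\xi|^{-4k}$ and $\lim_n\int\cF{\varphi_0}_n\,\overline{\cF\phi}\,|\xi|^{-2k}$ visibly coincide because $\cF\varphi_n=|\xi|^{2k}\cF{\varphi_0}_n$. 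You instead argue directly: Corollary \ref{corollary20070927_01} gives $\langle\varphi,\phi\rangle_0=\langle\varphi_0,J\phi\rangle_{L_2}$, you identify $(J\phi)(t,\cdot)=I^{2k}(\phi(t,\cdot))$ on the Fourier side (legitimate, since both lie in $L_2(\bR^d)$ with transform $|\xi|^{-2k}\cF\phi(t,\cdot)$, using $4k<d$), and then move $I^{2k}$ onto $\varphi_0$ by the symmetry of the Riesz kernel, justified by Fubini/H\"older with the exponents $2,q,q'$. Your route avoids the auxiliary solutions $w_n$ and the stability-of-the-solution-map step implicit in $\langle w_n,\eta\rangle\to\langle w,\eta\rangle$, at the price of the exponent bookkeeping and the extension of $J$ beyond test functions — exactly the points you flag; the paper's route trades those analytic checks for an extra layer of approximation. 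Both arguments share (and must justify) the integration by parts of $w\in W^{1,2}_{2,q}$ against the merely Schwartz-in-$x$ function $\phi$, which the paper asserts and you correctly earmark for a cutoff-and-limit argument. No gap of substance; your version is a sound, slightly more self-contained alternative for the key identity.
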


\begin{proof}
For $\eta \in \cD((0,T) \times \bR^d)$, find a function $\phi$, a
unique solution to
$$
- \phi_t - \Delta \phi = \eta, \qquad \phi(T,\cdot) = 0.
$$
Then by Lemma \ref{key-calculation}
$$
\langle h, \eta \rangle_{L_2((0,T)\times\bR^d)} = \langle \varphi,
\phi \rangle_0
$$
and
$$
\langle w, \eta \rangle_{L_2((0,T)\times\bR^d)} = \langle w_t -
\Delta w, \phi \rangle_{L_2((0,T)\times\bR^d)} = \langle \varphi_1,
\phi \rangle_{L_2((0,T)\times\bR^d)}.
$$
Find a sequence $\{ {\varphi_0}_n \} \subset \cD((0,T) \times
\bR^d)$ such that ${\varphi_0}_n \to \varphi_0$ in $L_2((0,T) \times
\bR^d)$. Let $\psi_n = I^{2k} ({\varphi_0}_n)$ and $w_n$ be the
solution to $(\partial/\partial t - \Delta) w_n = \psi_n$ and
$w_n(0,\cdot) = 0$. Then
$$
\langle w, \eta \rangle_{L_2((0,T)\times\bR^d)} = \lim_{n \to
\infty} \langle w_n, \eta \rangle_{L_2((0,T)\times\bR^d)} = \lim_{n
\to \infty} \langle \psi_n, \phi \rangle_{L_2((0,T)\times\bR^d)}
$$
$$
= \lim_{n \to \infty} \langle I^{2k}({\varphi_0}_n), \phi
\rangle_{L_2((0,T)\times\bR^d)} = \lim_{n \to \infty} \int_0^T
\int_{\bR^d} \cF {\varphi_0}_n(t,\xi) \overline{\cF \phi(t, \xi)}
|\xi|^{-2k} \, d \xi \, dt.
$$
On the other hand,
$$
\langle h, \eta \rangle_{L_2((0,T)\times\bR^d)} = \langle \varphi,
\phi \rangle_0 = \lim_{n \to \infty} \int_0^T \int_{\bR^d} \cF
\varphi_n(t,\xi) \overline{\cF \phi(t,\xi)} |\xi|^{-4k} \, d \xi \,
dt
$$
$$
= \lim_{n \to \infty} \int_0^T \int_{\bR^d} \cF {\varphi_0}_n(t,\xi)
\overline{\cF \phi(t,\xi)} |\xi|^{-2k} \, d \xi \, dt,
$$
where $\varphi_n = (-\Delta)^{2k} {\varphi_0}_n$. This finishes the
proof.
\end{proof}

Now we prove the main result of this subsection.

\begin{theorem}\label{main-theorem-Riesz}
Suppose that $f$ is the Riesz kernel of order $\alpha=4k,k \in
\bN_{+}$ and $d=4k+1$. Then the process solution $u$ of the
stochastic heat equation (\ref{heat-eq}) with vanishing initial
conditions is locally germ Markov.
\end{theorem}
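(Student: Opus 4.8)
The plan is to repeat the scheme of the proof of Theorem~\ref{main-theorem-Bessel}, verifying conditions (i) and (ii) of Theorem~\ref{kunsch-theorem}, with the operator $J$ of Lemma~\ref{lemma20070927_01} and the Riesz potential $I^{2k}$ playing the role of the isometry $\varphi\mapsto\varphi_1=(1-\Delta)^{-\alpha/2}\varphi$ from the Bessel case. The preliminary step is to establish the Riesz analogue of Corollary~\ref{corollary20070928}. Combining the representation $\cH^u=\{h=\bE(M(\varphi)u(\cdot)):\varphi\in\cP_0^{(d)}\}$ with Lemma~\ref{lemma20070927_01}, Lemma~\ref{lemma-w-equal-h} and the mixed-norm $L_p$-parabolic estimate used to solve~\eqref{eq01}, every $h\in\cH^u$ is the unique $w\in W_{2,q}^{1,2}((0,T)\times\bR^d)$ solving $w_t-\Delta w=I^{2k}(\varphi_0)$, $w(0,\cdot)=0$, for some $\varphi_0\in L_2((0,T)\times\bR^d)$, and by Corollary~\ref{corollary20070927_01} the map $h\mapsto\varphi_0$ is a linear bijection with $\|h\|_{\cH^u}=\|\varphi_0\|_{L_2((0,T)\times\bR^d)}$ (here $q=q_{d,2k}=2d$ since $d=4k+1$). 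Applying $\bD^{2k}$ slicewise to $h_t-\Delta h=I^{2k}(\varphi_0)$, using that $\bD^{2k}$ left-inverts $I^{2k}$ and that $\bD^{2k}=(-\Delta)^k$ on $I^{2k}(L_2(\bR^d))$ (Corollary~\ref{claim_2006_12_19}), I get the local identity $\varphi_0=(-\Delta)^k(h_t-\Delta h)$ in $\cD'((0,T)\times\bR^d)$. Thus $\cH^u$ is identified with the space of such $w$ equipped with the norm $\|(-\Delta)^k(w_t-\Delta w)\|_{L_2((0,T)\times\bR^d)}$.

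Condition (i) then follows almost immediately. If $h,g\in\cH^u$ have disjoint supports and $\operatorname{supp}h$ is compact, write $h=\bE(M(\varphi)u(\cdot))$, $g=\bE(M(\eta)u(\cdot))$ with $\varphi,\eta\in\cP_0^{(d)}$, set $\varphi_0=J(\varphi)$, $\eta_0=J(\eta)$, and use $\langle h,g\rangle_{\cH^u}=\langle\varphi,\eta\rangle_0=\langle\varphi_0,\eta_0\rangle_{L_2((0,T)\times\bR^d)}$ (the definition of the RKHS norm together with Corollary~\ref{corollary20070927_01}). Since $(-\Delta)^k(\partial_t-\Delta)$ is a local, constant-coefficient differential operator and $\varphi_0=(-\Delta)^k(h_t-\Delta h)$, we have $\operatorname{supp}\varphi_0\subseteq\operatorname{supp}h$, and likewise $\operatorname{supp}\eta_0\subseteq\operatorname{supp}g$; the supports being disjoint, $\langle\varphi_0,\eta_0\rangle_{L_2}=0$, hence $\langle h,g\rangle_{\cH^u}=0$.

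For condition (ii) I would argue exactly as in the Bessel proof. Given $\zeta=h+g\in\cH^u$ with $\operatorname{supp}h$ compact and disjoint from $\operatorname{supp}g$, I choose $\chi\in\cD(\bR^{d+1})$ with $\chi=1$ on a neighbourhood of $\operatorname{supp}h$ and $\chi=0$ on a neighbourhood of $\operatorname{supp}g$; then $\chi\zeta=h$ identically, so it is enough to show that $\cH^u$ is stable under multiplication by $\chi$. Writing $\psi:=\zeta_t-\Delta\zeta$ and $\rho:=(-\Delta)^k\psi\in L_2((0,T)\times\bR^d)$ (so $\psi=I^{2k}(\rho)$ slicewise), a routine commutator computation gives $(-\Delta)^k\bigl((\chi\zeta)_t-\Delta(\chi\zeta)\bigr)=\chi\rho+[(-\Delta)^k,\chi]\psi+(-\Delta)^k\bigl([\partial_t-\Delta,\chi]\zeta\bigr)$. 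The first term lies in $L_2$; in the remaining two the coefficients are supported in the fixed compact set where $\nabla\chi$ or $\chi_t$ is nonzero, and only spatial derivatives of $\psi$ of order $\le2k-1$ and of $\zeta$ of order $\le2k+1$ occur, all of which (using $D_x^{2k+2}\zeta,\partial_tD_x^{2k}\zeta\in L_2$, obtained by applying $(-\Delta)^k$ to the equation for $\zeta$ and invoking Theorem~\ref{theoremA}, together with the embeddings $I^\beta\colon L_2\to L_{q_{d,\beta}}$, $q_{d,\beta}>2$) lie in some $L_{q'}((0,T)\times\bR^d)$ with $q'>2$ and, being compactly supported, in $L_2((0,T)\times\bR^d)$. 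Since moreover $\chi\zeta\in W_{2,q}^{1,2}$ and $(\chi\zeta)(0,\cdot)=0$, I conclude $h=\chi\zeta\in\cH^u$, which completes the proof.

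The hard part will be the local reformulation of $\cH^u$ in the first paragraph and the module property it supplies for (ii). In the Bessel case $\cH^u=\cH_{2,0}^{k+2}(T)$ is a Krylov space of integer smoothness and is trivially a $\cD$-module; here, by contrast, the $\cH^u$-norm is governed by the nonlocal operator $I^{2k}$, so one must conjugate by $\bD^{2k}=(-\Delta)^k$ to turn it into a genuinely local object and then control the commutators $[(-\Delta)^k,\chi]$ and $(-\Delta)^k[\partial_t-\Delta,\chi]$. This is precisely where the low-frequency behaviour of Riesz (rather than Bessel) potentials enters and where the sharp mixed-norm exponent $q=2d$ is needed, and where one must check carefully that the remainder terms really land in $L_2((0,T)\times\bR^d)$ and not merely in a mixed-norm space.
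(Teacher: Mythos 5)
Your treatment of condition (i) is essentially the paper's: you transfer the inner product to $\langle\varphi_0,\eta_0\rangle_{L_2((0,T)\times\bR^d)}$ via Corollary \ref{corollary20070927_01} and get the support inclusions from the locality of $(-\Delta)^k(\partial_t-\Delta)$, which is what the paper does through Lemma \ref{lemma-w-equal-h} and Corollary \ref{claim_2006_12_19}. The genuine gap is in condition (ii). Your argument needs the converse of your first-paragraph identification: that a $w\in W^{1,2}_{2,q}$ with $w(0,\cdot)=0$ and $(-\Delta)^k(w_t-\Delta w)\in L_2((0,T)\times\bR^d)$ in the distributional sense belongs to $\cH^u$. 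But membership in $\cH^u$ requires $w_t-\Delta w=I^{2k}(\varphi_0)$ slicewise for some $\varphi_0\in L_2$, and $I^{2k}(L_2(\bR^d))$ is characterized by convergence in $L_2$ of the truncated hypersingular integrals defining $\bD^{2k}$ (property 3 of Section 4.3); Corollary \ref{claim_2006_12_19} only says $\bD^{2k}=(-\Delta)^k$ on functions \emph{already known} to lie in $I^{2k}(L_2)$. Passing from the distributional identity to membership in the Riesz potential space is exactly where the difficulty of the Riesz case sits, and it is what the paper's Lemma \ref{lemma01} (all of Appendix B) is for: it proves, via the truncated operators $\Lambda^{2k}_{\varepsilon}$ and the geometry of the separated supports, that $\chi\nu_1(t,\cdot)\in I^{2k}(L_2(\bR^d))$ with $\bD^{2k}(\chi\nu_1)=\chi\nu_0$, after which $h=\hat h$ follows from uniqueness. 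Your closing paragraph acknowledges a difficulty but locates it in the wrong place and defers it rather than resolving it, so the crux of the theorem is missing from the proposal.

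A secondary point: the commutator terms you plan to estimate actually vanish, so that effort is misdirected. Since $\zeta=h+g$ with disjoint supports and $\mathrm{supp}\,h$ compact, one may take $\chi$ equal to $1$ on a neighbourhood of $\mathrm{supp}\,h$ and $0$ on a neighbourhood of $\mathrm{supp}\,g$; as $\partial_t-\Delta$ and $(-\Delta)^k$ are local, $\nu_1=(\partial_t-\Delta)\zeta$ and $\nu_0$ are supported in $\mathrm{supp}\,h\cup\mathrm{supp}\,g$, where $\nabla\chi$ and $\chi_t$ vanish, whence $[\partial_t-\Delta,\chi]\zeta=0$ and $[(-\Delta)^k,\chi]\nu_1=0$ and simply $(\chi\zeta)_t-\Delta(\chi\zeta)=\chi\nu_1$ --- which is how the paper proceeds, with no commutators and no need for the extra regularity $D_x^{2k+2}\zeta,\ \partial_tD_x^{2k}\zeta\in L_2$ that you invoke without justification. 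What cannot be bypassed is showing that $\chi\nu_1$ is again a Riesz potential of an $L_2$ function: either reproduce Lemma \ref{lemma01}, or replace it by an explicit Liouville-type argument (a tempered distribution in $L_q(\bR^d)$, $q<\infty$, annihilated by $(-\Delta)^k$ is a polynomial, hence zero, so $\chi\nu_1(t,\cdot)=I^{2k}(\chi\nu_0)(t,\cdot)$ for a.e.\ $t$); neither appears in your proposal as written.
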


\begin{proof}
Again we need to verify conditions (i) and (ii) in Theorem
\ref{kunsch-theorem}.

We first verify condition (i).

Let $h(t,x) = \bE M(\varphi) u(t,x)$ and $g(t,x) = \bE M(\eta)
u(t,x)$, where $\varphi, \eta \in \cP_0^{(d)}$, such that
$\text{supp} \, h \cap \text{supp} \, g = \emptyset$. We need to
show that $\langle \varphi, \eta \rangle_0 = 0$. Let $\varphi_0 =
J(\varphi)$ and $\eta_0 = J(\eta)$, where $J$ is the operator
defined in Lemma \ref{lemma20070927_01}. Also let $\varphi_1 =
I^{2k}\left(\varphi_0(t,\cdot)\right)$ and $\eta_1 =
I^{2k}\left(\eta_0(t,\cdot)\right)$. For a.e. $t \in (0, T)$, we
have
$$\varphi_0(t,x)=\bD^{2k} \varphi_1(t,x) =(-\Delta)^k \varphi_1,
$$
where the second equality follows from Corollary
\ref{claim_2006_12_19}. Let $\phi(t,x) \in \cD((0,T) \times \bR^d)$
such that $\phi(t,x) = 0$ on $\text{supp} \, \varphi_1$. Then for
a.e. $t \in (0,T)$,
$$
\int_{\bR^d} \varphi_0(t,x) \phi(t,x) \, dx = \int_{\bR^d}
\varphi_1(t,x) (-\Delta)^k \phi(t,x) \, dx.
$$
Thus
$$
\int_0^T \int_{\bR^d} \varphi_0(t,x) \phi(t,x) \, dx \, dt =
\int_0^T \int_{\bR^d} \varphi_1(t,x) (-\Delta)^k \phi(t,x) \, dx \,
dt = 0.
$$
This shows that $\text{supp} \, \varphi_0 \subset \text{supp} \,
\varphi_1$. Similarly, we have $\text{supp} \, \eta_0 \subset
\text{supp} \, \eta_1$. Hence we have $\langle \varphi_0, \eta_0
\rangle_{L_2((0,T)\times\bR^d)} = 0$. This and Corollary
\ref{corollary20070927_01} prove that $\langle \varphi, \eta
\rangle_0 = 0$.

We now verify condition (ii).

Assume that $\zeta = h + g$, where $\zeta(t,x) = \bE M(\nu) u(t,x)$,
$\text{supp} \, h \cap \text{supp} \, g = \emptyset$, and
$\text{supp} \, h$ is compact. We have to prove that $h \in \cH^u$.

Let $\chi$ be an infinitely differentiable function defined on
$[0,T] \times \bR^d$ such that $0 \le \chi \le 1$, $\chi = 1$ on
$\text{supp} \, \mu$, and $\chi = 0$ in an (relative) open set
containing $\text{supp} \, \nu$. Then $\chi \zeta = h$ and $h \in
W_{2,q}^{1,2}((0,T) \times \bR^d)$. Let $(\partial/\partial t -
\Delta) \zeta = \nu_1$, where $\nu_1 = I^{2k}(\nu_0)$ and $\nu_0 =
J(\nu)$. Now we set
$$
\varphi_1 := \chi \nu_1, \quad \eta_1 := (1-\chi) \nu_1, \quad
\varphi_0 := \chi \nu_0.
$$
Since $\varphi_0 \in L_2((0,T) \times \bR^d)$ there exists a
sequence $\{ {\varphi_0}_n \} \subset \cD((0,T) \times \bR^d)$ such
that ${\varphi_0}_n \to \varphi_0$ in $L_2((0,T) \times \bR^d)$. Let
$\varphi_n = (-\Delta)^{k} {\varphi_0}_n$. Then $\varphi_n \in
\cD((0,T) \times \bR^d)$ and $\{\varphi_n\}$ is a Cauchy sequence in
$\cP_0^{(d)}$ due to the fact that
$$
\cF \varphi_n(t,\xi) = |\xi|^{2k} \cF {\varphi_0}_n(t,\xi).
$$
Let $\varphi$ be the limit in $\cP_0^{(d)}$ of $\varphi_n$. Then
$J(\varphi) = \varphi_0$. Now let $\hat{\varphi}_1 =
I^{2k}(\varphi_0)$ and $\hat{h}(t,x) = \bE M(\varphi) u(t,x)$.

We now prove that $h=\hat h$, which will imply that $h \in \cH^u$.
For this, it suffices to prove that $\varphi_1 = \hat{\varphi}_1$.
By Lemma \ref{lemma-w-equal-h}, we have $(\partial/\partial t -
\Delta) \hat{h} = \hat{\varphi}_1$. Notice that $\nu_1 = \varphi_1 +
\eta_1$ and $\text{supp} \, \varphi_1 \cap \text{supp} \, \eta_1 =
\emptyset$. Thus for a.e. $t \in (0,T)$, we have
$$
\text{supp}_x \, \varphi_1(t,\cdot) \cap \text{supp}_x \, \eta_1(t,
\cdot) = \emptyset.
$$
Thus by Lemma \ref{lemma01} it follows that, for a.e. $t \in (0,T)$,
$$
\bD^{2k} \varphi_1(t,\cdot) = \varphi_0(t,\cdot).
$$
We also have
$$
\bD^{2k} \hat{\varphi}_1 (t,\cdot) = \varphi_0(t,\cdot)
$$
for a.e. $t \in (0,T)$. Since $\varphi_1(t,\cdot) -
\hat{\varphi}_1(t,\cdot) \in I^{2k}(L_2(\bR^d))$ for a.e. $t \in
(0,T)$, there exists a function $f_t(x)$ such that
$$
\varphi_1(t,\cdot) - \hat{\varphi}_1(t,\cdot) = I^{2k}(f_t)
$$
for a.e. $t \in (0,T)$. From the fact that $\bD^{2k}$ is the left
inverse of the $I^{2k}$, we know that
$$
0 = \bD^{2k} \left( \varphi_1(t,\cdot) - \hat{\varphi}_1(t,\cdot)
\right) = \bD^{2k} I^{2k}(f_t) = f_t.
$$
We also know that $I^{2k}(0) = 0$. Hence $\varphi_1(t,\cdot) =
\hat{\varphi}_1(t,\cdot)$ for a.e. $t \in (0,T)$. Therefore,
$\varphi_1 = \hat{\varphi}_1$ as elements in $L_{2,q}((0,T) \times
\bR^d)$. The theorem is proved.
\end{proof}

\appendix

\section{Proof of Relation (\ref{conv-norm0})}
\noindent We need to prove that
$$\int_0^T \int_{\bR^d}
|\cF \varphi_n(s,\xi)-\cF \varphi(s,\xi) |^{2}\mu(d\xi)ds \to 0, \ \
\ {\rm as} \ n \to \infty.$$

\noindent Note that $\cF \varphi_n(s,\xi) = \sum_{m \in I_n} c_d
|Q^{(n)}_m| \exp \{- \mathrm{i} \xi \cdot x^{(n)}_m - (t^{(n)}_m-s)
|\xi|^2\} \eta(t^{(n)}_m,x^{(n)}_m)$ converges pointwise to $$\cF
\varphi(s,\xi)= \int_s^T \int_{\bR^d} c_d \exp \{ - \mathrm{i} \xi
\cdot x - (t-s) |\xi|^2 \} \eta(t,x) \, dx \, dt.$$

\noindent In order to apply the dominated convergence theorem, we
need to prove that there exists a function $\Psi$, which is
square-integrable with respect to $ds \times \mu(d\xi)$ such that
$|\cF \varphi_n(s,\xi)| \le \Psi(s,\xi)$. Let $N$ be a constant such
that $|\eta| \le N$. Then
$$|\cF \varphi_n(s,\xi)| \le \sum_{m \in I_n} c_d |Q^n_m| |\eta(t^{(n)}_m,x^{(n)}_m)|
\exp \{- (t^{(n)}_m-s) |\xi|^2\}$$
$$\le \sum_{m \in I_n} c_d N
|Q^{(n)}_m| e^{- (t^{(n)}_m-s) |\xi|^2} \le \int_s^T \int_{K'} c_d N
e^{-(t-s)|\xi|^2} \, dx \, dt$$
$$\le c_d N |K'| (T-s) 1_{|\xi|< 1} + c_d N |K'|
\frac{1-e^{(s-T)|\xi|^2}}{|\xi|^2} 1_{|\xi| \ge 1}: = \Psi(s,\xi),$$
where $K'$ is a compact set containing $K$, and the third inequality
above is due to the fact that $1_{Q^{(n)}_m}(t,x) e^{-(t_m^{(n)}-s)
|\xi|^2 } \le 1_{Q^{(n)}_m}(t,x) e^{-(t-s) |\xi|^2 }$. Clearly
$\Psi$ is square-integrable with respect to $ds \times \mu (d \xi)$.
This concludes the proof of (\ref{conv-norm0}).

\section{An Auxiliary Lemma}
The following technical result was used in the proof of Theorem
\ref{main-theorem-Riesz} for the verification of condition (ii).

\begin{lemma}
\label{lemma01} Let $2 < d/ 2k$ and $1/q = 1/2 - 2k/d$. Assume that
$\kappa \in I^{2k}(L_2(\bR^d))$ and $\kappa = \mu + \nu$, where
$\mu$, $\nu \in L_q(\bR^d)$, $\text{supp} \, \mu \cap \text{supp} \,
\nu = \emptyset$, and $\text{supp} \, \mu$ is compact. Then
$$
\mu \in I^{2k}(L_2(\bR^d)), \qquad \bD^{2k} \mu = \chi
\bD^{2k}\kappa,
$$
where $\chi$ is an infinitely differentiable function such that $0
\le \chi \le 1$, $\chi = 1$ on $\text{supp} \, \mu$, and $\chi = 0$
in an open set containing $\text{supp} \, \nu$.
\end{lemma}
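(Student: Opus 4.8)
\emph{The plan} is to identify $\mu$ explicitly as the Riesz potential $I^{2k}(\chi\,\bD^{2k}\kappa)$, from which both assertions follow at once. Write $g:=\bD^{2k}\kappa$, a well-defined element of $L_2(\bR^d)$ by property (3) (since $\kappa\in I^{2k}(L_2(\bR^d))$). As $\kappa=I^{2k}(\rho)$ for some $\rho\in L_2(\bR^d)$ and $\bD^{2k}$ is the left inverse of $I^{2k}$ on $L_2(\bR^d)$ (property (4)), applying $\bD^{2k}$ gives $\rho=g$, so $\kappa=I^{2k}(g)$; moreover $\bD^{2k}\kappa=(-\Delta)^k\kappa$ in $\cS'(\bR^d)$ by Corollary \ref{claim_2006_12_19}. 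Now $\chi g\in L_2(\bR^d)$ because $\chi$ is bounded, hence $I^{2k}(\chi g)\in I^{2k}(L_2(\bR^d))$; therefore, once I show $\mu=I^{2k}(\chi g)$, I get $\mu\in I^{2k}(L_2(\bR^d))$ and, again by property (4), $\bD^{2k}\mu=\bD^{2k}I^{2k}(\chi g)=\chi g=\chi\,\bD^{2k}\kappa$, as claimed.

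\emph{First} I would record two elementary facts: (a) $\mu=\chi\kappa$ a.e., since $\chi\kappa=\chi\mu+\chi\nu=\mu+0$ (because $\chi\equiv 1$ on $\text{supp}\,\mu$ and $\chi\equiv 0$ on the open set containing $\text{supp}\,\nu$); and (b) $\text{supp}\,\kappa\subseteq\text{supp}\,\mu\cup\text{supp}\,\nu$, since $\kappa=\mu+\nu$ vanishes off that set. Here, using that $\text{supp}\,\mu$ is compact and disjoint from the closed set $\text{supp}\,\nu$, I may and do take $\chi$ constant on a neighborhood of $\text{supp}\,\mu$ and on a neighborhood of $\text{supp}\,\nu$. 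The key step is then the distributional identity
$$(-\Delta)^k\mu=(-\Delta)^k(\chi\kappa)=\chi\,(-\Delta)^k\kappa=\chi g\qquad\text{in }\cS'(\bR^d).$$
Indeed, by the Leibniz rule $(-\Delta)^k(\chi\kappa)$ is a finite sum of terms $c_{\alpha\beta}\,(\partial^\alpha\chi)(\partial^\beta\kappa)$ with $|\alpha|+|\beta|=2k$; the terms with $\alpha=0$ assemble to $\chi\,(-\Delta)^k\kappa=\chi g$, while every term with $|\alpha|\ge 1$ is supported in $\text{supp}(\partial^\alpha\chi)\cap\text{supp}(\partial^\beta\kappa)\subseteq\text{supp}(\nabla\chi)\cap\text{supp}\,\kappa$, which is empty by (b) and the choice of $\chi$.

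\emph{To finish}, I compare $\mu$ with $I^{2k}(\chi g)$. Since $I^{2k}(\chi g)\in I^{2k}(L_2(\bR^d))$, Corollary \ref{claim_2006_12_19} and property (4) give $(-\Delta)^kI^{2k}(\chi g)=\bD^{2k}I^{2k}(\chi g)=\chi g$, so $w:=\mu-I^{2k}(\chi g)$ satisfies $(-\Delta)^kw=0$ in $\cS'(\bR^d)$. Now $\mu\in L_q(\bR^d)$ by hypothesis and $I^{2k}(\chi g)\in L_q(\bR^d)$ by property (1), so $w\in L_q(\bR^d)\subset\cS'(\bR^d)$ with $2<q<\infty$. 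Taking Fourier transforms, $|\xi|^{2k}\widehat w=0$, hence $\widehat w$ is supported at the origin, $w$ is a polynomial, and the only polynomial in $L_q(\bR^d)$ is $0$. Thus $\mu=I^{2k}(\chi g)$, which completes the proof via the reduction in the first paragraph.

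\emph{The main obstacle} I anticipate is the Leibniz computation: one must check that every cross-term genuinely drops out, which rests on the two support observations (a) and (b) together with the choice of $\chi$ locally constant near $\text{supp}\,\mu$ and near $\text{supp}\,\nu$. The remaining ingredients — the left-inverse and $L_q$-mapping properties of $I^{2k}$, the identification $\bD^{2k}=(-\Delta)^k$ on $I^{2k}(L_2(\bR^d))$ from Corollary \ref{claim_2006_12_19}, and the classical fact that a tempered distribution annihilated by $(-\Delta)^k$ is a polynomial — are standard and require only routine verification.
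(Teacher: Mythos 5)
Your argument is correct, but it follows a genuinely different route from the paper's. The paper works directly with the hypersingular-integral definition of $\bD^{2k}$: it truncates, writes $\bD^{2k}_{\varepsilon}(\chi\kappa)$ as $\Lambda^{2k}_{\varepsilon}(\chi\kappa)$ plus a harmless tail, proves the localization identity $\Lambda^{2k}_{\varepsilon}(\chi\kappa)=\chi\Lambda^{2k}_{\varepsilon}\kappa$ for small $\varepsilon_0$ by a covering argument with the sets $\cO_1,\cO_2,\cO_3$ built from the positive distance between $\mathrm{supp}\,\mu$ and $\mathrm{supp}\,\nu$, and deduces $L_2$-convergence of $\bD^{2k}_{\varepsilon}(\chi\kappa)$, hence $\mu=\chi\kappa\in I^{2k}(L_2(\bR^d))$; the identity $\bD^{2k}\mu=\chi\bD^{2k}\kappa$ is then read off via Corollary \ref{claim_2006_12_19}. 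You instead stay entirely at the level of tempered distributions: using Corollary \ref{claim_2006_12_19} to replace $\bD^{2k}$ by the local operator $(-\Delta)^k$, you kill the Leibniz cross-terms by the support argument, obtain $(-\Delta)^k\mu=\chi\,\bD^{2k}\kappa$, and then identify $\mu=I^{2k}(\chi\,\bD^{2k}\kappa)$ by a Liouville-type uniqueness step ($|\xi|^{2k}\widehat w=0$ forces $w$ to be a polynomial, and a polynomial in $L_q$ vanishes). What your route buys is the complete avoidance of the $\varepsilon,\varepsilon_0$ bookkeeping and the covering construction; what it costs is the extra Fourier/Liouville ingredient and a small point you should make explicit: you fix $\chi\equiv 1$ on a neighborhood of $\mathrm{supp}\,\mu$, whereas the lemma asserts the identity for every admissible $\chi$ (which need only equal $1$ on $\mathrm{supp}\,\mu$ itself). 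This is harmless because $\mathrm{supp}\,\bD^{2k}\kappa=\mathrm{supp}\,(-\Delta)^k\kappa\subseteq\mathrm{supp}\,\kappa\subseteq\mathrm{supp}\,\mu\cup\mathrm{supp}\,\nu$, so $\chi\,\bD^{2k}\kappa$ is the same for all admissible $\chi$; note the paper makes an analogous ``we may assume'' reduction on $\chi$, so this is a one-line remark rather than a gap.
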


\begin{proof}
Since $\kappa \in I^{2k}(L_2(\bR^d))$, we have
$$
\bD_{\varepsilon}^{2k} \kappa := \frac{1}{c(d,l,2k)} \int_{|z| >
\varepsilon} \frac{\left( \Delta_z^{l} \kappa
\right)(x)}{|z|^{d+2k}} \, dz \to \bD^{2k} \kappa \quad \text{in}
\quad L_2(\bR^d).
$$
Note that $\bD_{\varepsilon}^{2k} \kappa \in L_2(\bR^d)$ (see
Proposition 2.4 in \cite{almeida-samko06}). Let
$$
\Lambda_{\varepsilon}^{2k} \kappa := \frac{1}{c(d,l,2k)}
\int_{\varepsilon < |z| \le \varepsilon_0} \frac{\left( \Delta_z^{l}
\kappa \right)(x)}{|z|^{d+2k}} \, dz, \quad \varepsilon <
\varepsilon_0.
$$
Since $\Lambda_{\varepsilon}^{2k} \kappa = \bD_{\varepsilon}^{2k}
\kappa - \bD_{\varepsilon_0}^{2k} \kappa$, we have
$\Lambda_{\varepsilon}^{2k} \kappa \in L_2(\bR^d)$. Moreover,
$$
\Lambda_{\varepsilon_1}^{2k} \kappa - \Lambda_{\varepsilon_2}^{2k}
\kappa = \int_{\varepsilon_1 < |z| \le \varepsilon_2} \frac{\left(
\Delta_z^{l} \kappa \right)(x)}{|z|^{d+2k}} \, dz =
\bD_{\varepsilon_1}^{2k} \kappa - \bD_{\varepsilon_2}^{2k} \kappa.
$$
Form this and the fact that $\bD_{\varepsilon}^{2k} \kappa$ is
Cauchy in $L_2(\bR^d)$, it follows that
$\{\Lambda_{\varepsilon}^{2k} \kappa\}$ is a Cauchy sequence in
$L_2(\bR^d)$. Consider $\bD^{2k}_{\varepsilon} (\chi \kappa)$, which
is well-defined because $\chi \kappa \in L_q(\bR^d)$. Note that
$$
\bD^{2k}_{\varepsilon} (\chi \kappa) = \Lambda^{2k}_{\varepsilon}
(\chi \kappa) + \int_{|z| \ge \varepsilon_0} \frac{\left(
\Delta_z^{l} \chi \kappa \right)(x)}{|z|^{d+2k}} \, dz
$$
and
\begin{equation}
\label{technical-eq-appendixC} \Lambda^{2k}_{\varepsilon} (\chi
\kappa) = \chi \Lambda^{2k}_{\varepsilon} \kappa
\end{equation}
for a sufficiently small $\varepsilon_0 > 0$. The proof of
\eqref{technical-eq-appendixC} is stated below. Also note that
$$
\Lambda^{2k}_{\varepsilon} (\chi \kappa) \in L_2(\bR^d), \quad
\int_{|z| \ge \varepsilon_0} \frac{\left( \Delta_z^{l} \chi \kappa
\right)(x)}{|z|^{d+2k}} \, dz \in L_2(\bR^d),
$$
where the latter follows from the fact that $\chi \kappa \in
L_2(\bR^d)$ ($\chi$ has a compact support and $2 \le q$). In
addition, $\Lambda^{2k}_{\varepsilon} \kappa$ approaches a function
in $L_2(\bR^d)$ as $\varepsilon \searrow 0$. Thus
$\bD^{2k}_{\varepsilon} (\chi \kappa)$ converges a function in
$L_2(\bR^d)$. This shows that $\mu = \chi \kappa \in
I^{2k}(L_2(\bR^d))$. To prove $\bD^{2k} \mu = \chi \bD^{2k}\kappa$,
we can just use Corollary \ref{claim_2006_12_19}.

Let us prove \eqref{technical-eq-appendixC} as follows. For each $ x
\in \text{supp} \, \mu$, find $\delta_x > 0$ such that $B(x,
\delta_x) \cap \text{supp} \, \nu = \emptyset$. Consider
$$
\{ B(x, \delta_x/5) : x \in \text{supp} \, \mu \}.
$$
Since $\text{supp} \, \mu$ is compact, we have a finite subset $\{
x_i : i \in I \} \subset \text{supp} \, \mu$ such that
$$
\bigcup_{i \in I} B(x_i, \delta_{x_i}/5) \supset \text{supp} \, \mu.
$$
Let
$$
\cO_1 := \bigcup_{i \in I} B(x_i, \delta_{x_i}/4), \quad \cO_2 :=
\bigcup_{i \in I} B(x_i, \delta_{x_i}/3), \quad \cO_3 := \bigcup_{i
\in I} B(x_i, \delta_{x_i}/2).
$$
Then
$$
\cO_i \supset \text{supp} \, \mu, \quad i = 1, 2, 3
$$
and
$$
\cO_i \cap \text{supp} \, \nu = \emptyset, \quad i = 1, 2, 3.
$$
We may assume that $\chi = 1$ on $\cO_2$ and $\chi = 0$ on
$\cO_3^c$. We also assume that $\varepsilon_0 < \delta/(20 l)$,
where $\delta = \min_{i \in I} \delta_{x_i}$. By definition we have
$$
\Lambda_{\varepsilon}^{2k} \left( \chi \kappa  \right) (x)=
\frac{1}{c} \int_{\varepsilon < |z| \le \varepsilon_0} \frac{\left(
\Delta_z^{l} (\chi \kappa) \right)(x)}{|z|^{d+2k}} \, dz
$$
$$
= \frac{1}{c} \int_{\varepsilon < |z| \le \varepsilon_0} \frac{
\sum_{k=0}^l (-1)^k c_l^k \chi(x-kz) \kappa(x-kz)}{|z|^{d+2k}} \, dz
$$
Let $x \in \cO_1$, i.e. $x \in B(x_i, \delta_{x_i}/4)$. Then
$$
|x_i - (x - k z)| \le |x_i - x| + |k z| \le \delta_{x_i}/4 + l
\delta/(20 l)
$$
$$
\le \delta_{x_i}/4 + \delta_{x_i}/12 = \delta_{x_i}/3.
$$
Thus $x-kz \in \cO_2$. This shows that $\chi(x-kz) = 1$ and
$$
\Lambda_{\varepsilon}^{2k} \left( \chi \kappa  \right) (x)= \chi(x)
\Lambda_{\varepsilon}^{2k} \kappa (x), \quad x \in \cO_1.
$$
Now let $x \in \cO_1^c$. Then for every $x_i$, $i \in I$,
$$
|x_i - (x - k z)| \ge |x_i - x| - l|z|
$$
$$
\ge \delta_{x_i}/4 - l \delta/(20 l) \ge \delta_{x_i}/4 -
\delta_{x_i}/20 = \delta_{x_i}/5.
$$
This implies that $x-kz \notin \text{supp} \, \mu$. In that case we
have
$$
\chi(x-kz) \kappa(x-kz) = 0
$$
because $\kappa(x-kz) = 0$ in case $\chi(x-kz) > 0$. Thus
$\Lambda_{\varepsilon}^{2k} \left( \chi \kappa  \right) (x) = 0$. In
addition, $\chi(x) \Lambda_{\varepsilon}^{2k} \kappa (x) = 0$
(recall that $x \in \cO_1^c$ and $x-kz \notin \text{supp} \, \mu$)
because $\kappa(x-kz) = 0$ in case $\chi(x) > 0$. Indeed, if $x \in
\cO_3 \setminus \cO_1$, then there exists $x_j$, $j \in I$, such
that $x \in B(x_j, \delta_{x_j}/2)$. Then
$$
|x_j - (x-kz)| \le |x_j - x| + l |z| < \delta_{x_j}.
$$
Thus by the choice of $\delta_{x_j}$ it follows that $x-kz \notin
\text{supp} \, \nu$, that is, $\kappa(x-kz) = 0$.
\end{proof}

\par\bigskip\noindent
{\bf Acknowledgment.} The authors are grateful to an anonymous
referee who carefully read the article and made some suggestions for
future research.

\bibliographystyle{amsplain}

\end{document}